\documentclass[12pt]{amsart}
\vfuzz2pt 
\hfuzz2pt 
\usepackage{a4wide, amsmath, amssymb}
\usepackage[usenames]{color}
\newtheorem{thm}{Theorem}[section]

\newtheorem{lem}[thm]{Lemma}
\newtheorem{prop}[thm]{Proposition}
\theoremstyle{definition}
\theoremstyle{definition}
\theoremstyle{definition}
\newtheorem{ex}[thm]{Example}\theoremstyle{definition}

\newtheorem{rem}[thm]{Remark} \numberwithin{equation}{section}

\newcommand{\R}{\mathbb R}

\newcommand{\T}{\mathbb T}

\def\1{\mathbb I}

\def\e{\epsilon}


\begin{document}
\title[Large time behavior for nonlinear degenerate parabolic equations]{Large time behavior for some 
nonlinear degenerate parabolic equations}
\date{\today}

\author{Olivier Ley\and Vinh Duc Nguyen}
\address{IRMAR, INSA de Rennes, 35708 Rennes, France} \email{olivier.ley@insa-rennes.fr}
\address{IRMAR, INSA de Rennes, 35708 Rennes, France} \email{vinh.nguyen@insa-rennes.fr}

\begin{abstract}
We study the asymptotic behavior of Lipschitz
continuous solutions
of nonlinear degenerate parabolic equations in the
periodic setting. Our results apply to a large
class of Hamilton-Jacobi-Bellman equations. Defining $\Sigma$ as the set where
the diffusion vanishes, i.e., where the equation is totally
degenerate, we obtain the convergence when the equation is
uniformly parabolic outside $\Sigma$ and, on $\Sigma,$ the Hamiltonian
is either strictly convex or satisfies an assumption
similar of the one introduced by Barles-Souganidis (2000) for
first-order Hamilton-Jacobi equations. This latter assumption
allows to deal with equations with nonconvex Hamiltonians.
We can also release the uniform parabolic requirement
outside $\Sigma$.
As a consequence, we prove the convergence of some everywhere degenerate
second-order equations.
\end{abstract}

\subjclass[2010]{Primary 35B40; Secondary 35K65, 35K55, 35F21, 35B50, 49L25}
\keywords{Asymptotic behavior, Nonlinear degenerate parabolic equations,
Hamilton-Jacobi equations, viscosity solutions}

\maketitle

\section{Introduction}

The large time behavior of the solution of
\begin{eqnarray}\label{DHJE-intro}
&& \left\{
\begin{array}{ll}
\displaystyle\frac{\partial u}{\partial t}
+\mathop{\rm sup}_{\theta\in\Theta}
\{-{\rm trace}(A_\theta(x)D^2 u) + H_\theta(x,Du)\}=0, &
(x,t)\in \T^N\times (0,+\infty),\\[3mm]
u(x,0)=u_{0}(x), & x\in\T^N,
\end{array}
\right.
\end{eqnarray}
in the periodic setting ($\T^N$ is the flat torus)
was extensively studied (see the references below) in two
frameworks: for first-order Hamilton-Jacobi (HJ in short)
equations, i.e., when 
$A_\theta\equiv 0,$ and for uniformly parabolic equations.
It appears that there is a gap in the type of results and in
their proofs which are different.
\smallskip

In this work, we  investigate the situation in between.
We obtain a new proof for the large time behavior
of fully nonlinear degenerate second order equations which includes
most of the two previous type of results and allows to deal
with some everywhere degenerate second order equations.
According to our knowldege, the only result in this direction
is the one of Cagnetti et al.~\cite{cgmt13} where a
particular case of degenerate viscous HJ
equation is treated with a completely different
approach.
\smallskip

The precise  assumptions and the statements of our results 
are listed in the next section but let us 
describe the main ideas. We suppose that
there exists a, possibly empty, subset
\begin{eqnarray*}
&& \Sigma=\{x \in \T^N : A_\theta(x)=0 \text{ for all } \theta\in \Theta\}
\end{eqnarray*}
where the Hamiltonian $H_\theta(x,p)$ satisfies some first-order type assumptions
and the equation is uniformly parabolic outside $\Sigma,$ i.e.,
for all $\delta >0,$ there exists $\nu_\delta >0$ such that
\begin{eqnarray*}
&& \text{$A_\theta(x)=\sigma_\theta(x)\sigma_\theta(x)^T\geq \nu_\delta I$ 
 for $x\in \Sigma_\delta^C:=\{{\rm dist}(\cdot,\Sigma)> \delta\}$}.
\end{eqnarray*}
Actually, we are able to replace this assumption with a
weaker condition of ellipticity like
\begin{eqnarray}\label{hyp-simpl}
&& \left\{
\begin{array}{l}
\text{for all $\delta >0,$ there exists $\psi_\delta\in C^2(\T^N)$ such that}\\[1mm]
\displaystyle \mathop{\rm sup}_{\theta\in\Theta} \{-{\rm trace}(A_\theta(x)D^2 \psi_\delta)\}
-C|D\psi_\delta| >0 \quad \text{in $\Sigma_\delta^C:=\{{\rm dist}(\cdot,\Sigma)>\delta\}$} 
\end{array}\right.
\end{eqnarray}
(see \eqref{sur-sol-stricte} for the more general assumption).
It can be interpreted as follows.
When considering the exit time stochastic control problem associated 
with the equation in~\eqref{hyp-simpl}, it means that the controlled process
leaves $\Sigma_\delta^C$ almost surely in finite time.
\smallskip

Assuming that there exists a solution $(c,v)\in \R\times W^{1,\infty}(\T^N)$
of the ergodic problem associated with~\eqref{DHJE-intro}, namely
\begin{eqnarray*}
\mathop{\rm sup}_{\theta\in\Theta}
\{-{\rm trace}(A_\theta(x)D^2 v) + H_\theta(x,Dv)\}=c,
\quad x\in\T^N
\end{eqnarray*}
and that $\{u(\cdot, t)+ct, t\geq 0\}$ enjoys suitable compactness properties
in $W^{1,\infty}(\T^N),$
we obtain that
\begin{eqnarray}\label{asympt-beh}
u(x,t)+ct \to u_\infty(x) \quad \text{in $C(\T^N)$ as $t\to +\infty$}
\end{eqnarray}
in the two following frameworks.
\smallskip

The first case is when the $H_\theta$'s are strictly convex in $\Sigma,$
uniformly with respect to $\theta$ (see~\eqref{cvx_neuf} and Theorems~\ref{strict_co_dege}
and~\ref{cas-degenere}). 
A typical example,
which includes the mechanical Hamiltonian
$|p|^2+\ell(x),$ is
\begin{eqnarray}\label{hyp-superl-intro}
&& H_\theta(x,p)= a_\theta(x)|p|^{1+\alpha_\theta}+\langle b_\theta(x), p\rangle
+\ell_\theta(x),\\[2mm]
&& 1 < \underline{\alpha} \leq \alpha_\theta \leq \overline{\alpha},
\quad 0< \underline{a}\leq  a_\theta(x)\leq C,
\nonumber
\end{eqnarray}
and $a_\theta, b_\theta, \ell_\theta$ are bounded Lipschitz continuous
uniformly with respect to $\theta.$
Another example is the case of uniformly convex Hamiltonians
for which $(H_\theta)_{pp}(x,p)\geq 2\underline{a} I.$
\smallskip

The second case is, roughly speaking, when the $H_\theta$'s satisfy
\begin{eqnarray}\label{H10-intro}
&& \mathop{\rm inf}_{\theta\in\Theta}\{H_\theta(x,\mu p)-\mu H_\theta(x,p)\}
\geq (1-\mu )c \quad \text{for $\mu >1,$ $(x,p)\in\T^N\times\R^N,$}
\end{eqnarray}
with a strict inequality for $x\in\Sigma$ and $p\not= 0$
(see Assumption~\eqref{H10} and Theorems~\ref{mainresult} and~\ref{cas-degenere}).
This is also a convexity-like assumption close to the one introduced in 
Barles-Souganidis~\cite{bs00}
for first-order HJ equations. This assumption may appear to be restrictive 
in the sense that, in general, we do not know the exact value of the ergodic constant $c$
which appears in~\eqref{H10-intro}. The main motivations to deal with such a case
are, at first, it holds for some nonconvex cases (see Example~\ref{ex-non-cvx1}) 
which are a recurrent difficulty in HJ theory. Secondly, it allows to deal with
{\it Namah-Roquejoffre Hamiltonians} \cite{nr99} (see Section~\ref{subsec-nr})
$H(x,p)=F(x,p)-f(x),$
where $F$ is convex (but may be not strictly convex), $F(x,p)\geq F(x,0)=0.$
When the minimum of $f$ is achieved on $\Sigma,$ we can
calculate explicitely the ergodic constant, check that~\eqref{H10-intro} holds
and obtain the convergence~\eqref{asympt-beh}.
\smallskip

Detailled examples of applications are given in Section~\ref{sec:exples}
but let us give now a typical control-independent example.
Consider
\begin{eqnarray*}
&& \frac{\partial u}{\partial t}-a(x)^2 
\, {\rm trace}(\bar{\sigma}\bar{\sigma}^T D^2 u)+H(x,Du)=0,
\end{eqnarray*}
where $a\in W^{1,\infty}(\T^N),$ $\bar{\sigma}\in \mathcal{M}_N$ is a constant
matrix and $H$ is convex on $\T^N$ and strictly convex on  $\Sigma.$

\begin{itemize}

\item When $\bar{\sigma}$ is invertible then
the convergence~\eqref{asympt-beh} holds by 
Theorem~\ref{strict_co_dege} without further
assumptions on $a.$

\item When $a$ vanishes on $\partial [0,1]^N,$ then the 
convergence~\eqref{asympt-beh} holds by 
Theorem~\ref{cas-degenere} and Proposition~\ref{cor-degenere},
for any matrix $\bar{\sigma}$
(degenerate or not).

\end{itemize}

Let us recall the existing results and compare with ours.
The asymptotic behavior of~\eqref{DHJE-intro}
was extensively studied for
totally degenerate equations, i.e., first-order HJ
equations for which $\Sigma=\T^N,$ see Namah-Roquejoffre~\cite{nr99}, Fathi~\cite{fathi98}, 
Davini-Siconolfi~\cite{ds06}, Barles-Souganidis~\cite{bs00}, Barles-Ishii-Mitake~\cite{bim13}
(and the references therein for convergence results in bounded sets 
with various boundary conditions or in $\R^N$).
An assumption similar to~\eqref{H10-intro} was introduced 
in~\cite{bs00} to encompass all the previous works on first-order HJ equations
and to extend them to some nonconvex Hamiltonians. The arguments of~\cite{bs00} 
were recently revisited and simplified in
Barles-Ishii-Mitake~\cite{bim13}. 
Due to the above works, it is therefore natural to assume the strict convexity
of $H_\theta$ or~\eqref{H10-intro} on $\Sigma,$
which is the area where the equation is totally degenerate, and we recover
most of the previous results when taking $\Sigma=\T^N.$
\smallskip

As far as second order parabolic equations are concerned, there are less results
in the periodic setting.
Barles-Souganidis~\cite{bs01} obtained the asymptotic behavior~\eqref{asympt-beh}
in two contexts for
\begin{eqnarray*}
&& \frac{\partial u}{\partial t}-\Delta u + H(x,Du)=0\quad (x,t)\in\T^N\times (0,+\infty).
\end{eqnarray*}
The first one is when the Hamiltonian $H$ is {\em sublinear}, i.e.,
typically when $|H(x,p)|\leq C(1+|p|).$ The second one is
for {\em superlinear} Hamiltonians, i.e.,
typically when $H(x,p)$ is given by~\eqref{hyp-superl-intro}
(see~\eqref{BSsuperlinear} for a precise assumption).
Some extensions are given when $-\Delta u$ is replaced by
$-{\rm trace}(A(x,Du)D^2u)$ but the convergence result holds for
{\rm uniformly parabolic} equations. The reason is that the proof of convergence
is based on the strong maximum principle and, up to our knowledge, it is
the case for all results for second order equations except in the
recent work of Cagnetti et al.~\cite{cgmt13}. In this paper, the authors
obtained the convergence~\eqref{asympt-beh} for~\eqref{DHJE-intro} with
assumptions very close to ours in the particular case
of control-independent uniformly convex Hamiltonians (see Remark~\ref{rem-ref}
for details). Their approach is completely different and relies strongly
on the linearity with respect to $D^2u$ of the equation. 
We refer the reader to Tabet Tchamba~\cite{tabet10} and Fujita-Ishii-Loreti~\cite{fil06}
and the references therein for
related results of convergence for uniformly parabolic equations
in different settings (bounded sets, in $\R^N$).
\smallskip

The main step in the
proof of our results is the following.
We prove that that each $\tilde{u}$ in the $\omega$-limit set of $u+ct$ in
$C(\T^N\times [0,+\infty))$ is nonincreasing in $t$ thus 
$\tilde{u}(x,t)\to u_\infty(x)$ as $t\to +\infty.$
The convergence~\eqref{asympt-beh} then follows easily. 
To prove this main step, it is enough to show that
\begin{eqnarray}\label{P-intro}
&& \mathop{\rm sup}_{x\in\T^N} P_\eta[\tilde{u}](x,t),
\quad \text{with }
 P_\eta[\tilde{u}](x,t)=\mathop{\rm sup}_{s\geq t}\{\tilde{u}(x,t)-\tilde{u}(x,s)-\eta (s-t)\},
\end{eqnarray}
is a nonpositive constant ${m}_\eta$ for every $\eta >0.$
We argue by contradiction assuming ${m}_\eta >0.$ Since, by the stability result,
$\tilde{u}$ is still solution of~\eqref{DHJE-intro}, we obtain
that $P_\eta[\tilde{u}]$ is a subsolution
of a linearized
equation of the form
\begin{eqnarray*}
 &&\frac{\partial U}{\partial t}
+\mathop{\rm inf}_{\theta\in\Theta}\big\{
-{\rm tr}(\sigma_\theta(x)\sigma_\theta(x)^TD^2U)\big\}
-C|DU|\leq 0 \quad (x,t)\in\T^N\times (0,+\infty).
\end{eqnarray*}
In the set $\Sigma^C,$ we use the ellipticity-like condition~\eqref{hyp-simpl}
and strong maximum principle arguments to show that the maximum 
in~\eqref{P-intro} is achieved at $x\in \Sigma.$ 
In the set $\Sigma$
where $A_\theta =0,$ we have formally a first-order equation. We then apply
the first-order type assumptions, $H_\theta$
strictly convex or satisfying~\eqref{H10-intro}, to prove that ${m}_\eta$ 
cannot be positive. The main difficulty at this step is to control the 
second order terms  in~\eqref{DHJE-intro} near $\Sigma,$ see the proof 
of Lemma~\ref{IMpo1} for details.
\smallskip

The paper is organized as follows. In Section~\ref{sec:stat},
we start by introducing some steady assumptions for~\eqref{DHJE-intro}
which are in force in all the paper. We state 
Theorem~\ref{strict_co_dege} (strictly convex Hamiltonians) and 
Theorem~\ref{mainresult} (nonconvex cases) when~\eqref{DHJE-intro}
is uniformly parabolic outside $\Sigma$ since it is a more simpler
and natural case. Then we extend these results to a more
degenerate framework, see Theorem~\ref{cas-degenere}. 
Some concrete examples are gathered
in Section~\ref{sec:exples}. We also
introduce {\em superlinear Hamiltonians} for which all the steady
assumptions of Section~\ref{sec:basic-assumptions}
are satisfied. The rest of the paper
is devoted to the proofs. 
The strategy of proof is the same
for the three convergence results. It is why the core of the paper
is Section~\ref{strict_dege} where Theorem~\ref{strict_co_dege}
is proved. It relies on several lemmas. Section~\ref{mix_main} 
and  the last Section~\ref{sec:dege}
are devoted, respectively,
to the proofs of Theorem~\ref{mainresult}
and Theorem~\ref{cas-degenere} and their applications.
\smallskip

\noindent
{\bf Acknowledgement.} We would like to thank Guy Barles
for bringing to our knowledge the paper~\cite{bim13} which allowed us
to simplify our proofs.
This work was partially supported by the ANR (Agence Nationale de
la Recherche) through HJnet project ANR-12-BS01-0008-01
and WKBHJ project ANR-12-BS01-0020.

\section{Statement of the results}
\label{sec:stat}

\subsection{Setting of the problem and first assumptions}
\label{sec:basic-assumptions}

We consider
\begin{eqnarray}\label{DHJE}
&& \left\{
\begin{array}{ll}
\displaystyle\frac{\partial u}{\partial t}
+\mathop{\rm sup}_{\theta\in\Theta}
\{-{\rm trace}(A_\theta(x)D^2 u) + H_\theta(x,Du)\}=0, &
(x,t)\in \T^N\times (0,+\infty),\\[3mm]
u(x,0)=u_{0}(x), & x\in\T^N,
\end{array}
\right.
\end{eqnarray}
and, for $\lambda>0,$ the associate approximate stationary equation
\begin{eqnarray}\label{DHJE-sta}
\lambda v_\lambda+\mathop{\rm sup}_{\theta\in\Theta}
\{-{\rm trace}(A_\theta(x)D^2 v_\lambda) + H_\theta(x,Dv_\lambda)\}=0, \quad
x\in \T^N.
\end{eqnarray}

The following assumptions will be in force in all the paper.
The set $\Theta$ is a metric space. Let $C>0$ be a fixed constant 
(independent of $\theta$).
\begin{eqnarray}\label{H1_dege}
&& 
\text{For all $\theta\in\Theta,$  $A_\theta=\sigma_\theta \sigma_\theta^T,$
$\sigma_\theta\in W^{1,\infty}(\T^N;\mathcal{M}_N)$ 
with $|\sigma_\theta|,|D\sigma_\theta|\leq C$;}
\end{eqnarray}
\begin{eqnarray}\label{Hunif-cont}
&& \left\{\begin{array}{l}
\text{for all $\theta\in\Theta,$ 
${H}_\theta\in W^{1,\infty}_{\rm loc}(\T^N\times \R^N),$ 
$|{H}_\theta(x,0)|\leq C,$}\\
\text{for all $R\!>\!0,$ there exists $C_R>0$ independent of $\theta$ such that}\\
\quad|{H}_\theta(x,p)-{H}_\theta(y,q)|\leq C_R(|x-y|+|p-q|), \ x,y\in\T^N, 
|p|,|q|\leq R.
\end{array}\right.
\end{eqnarray}
These assumptions are natural when dealing with Hamilton-Jacobi equations.
Notice that~\eqref{Hunif-cont} is automatically satisfied when there is no control.
Moreover, we assume
\begin{eqnarray}\label{sol-lip-hold}
&& \left\{\begin{array}{l}
\text{There exists viscosity solutions $u\in C(\T^N\times [0,+\infty))$
and $v_\lambda\in C(\T^N)$}\\
\text{of~\eqref{DHJE} and~\eqref{DHJE-sta} respectively with}\\
\text{$|u(x,t)-u(y,t)|, |v_\lambda(x)-v_\lambda(y)| \leq C|x-y|,$ \ $x,y\in\T^N, t\geq 0, \lambda >0.$}
\end{array}\right.
\end{eqnarray}
Besides the existence of a continuous viscosity solution of the
equation, we assume gradient bounds {\em independent}
of $t$ and $\lambda.$ This is a crucial point and the first
step when trying to prove asymptotic results.
Let us give some important consequences of~\eqref{sol-lip-hold}.
At first, we have
a comparison principle for~\eqref{DHJE} and~\eqref{DHJE-sta}.
By the comparison principle (for instance for~\eqref{DHJE}), 
we mean that, if $u_1$ and $u_2$ are respectively USC subsolution
and LSC supersolution of~\eqref{DHJE} and either $u_1$
or $u_2$ satisfies the Lipschitz continuity of~\eqref{sol-lip-hold}
then $u_1-u_2\leq {\rm sup}_{\T^N} \{(u_1-u_2)^+ (\cdot,0)\}.$ 
In particular, we have uniqueness of the solutions
of~\eqref{DHJE}-\eqref{DHJE-sta}
in the class of functions satisfying the Lipschitz continuity 
of~\eqref{sol-lip-hold}.
The second consequence is that we can solve the ergodic problem
associated with~\eqref{DHJE}.
More precisely, there exists a unique $c\in\R$
and $v\in W^{1,\infty}(\T^N)$ solutions of
\begin{eqnarray}\label{sta}
\mathop{\rm sup}_{\theta\in\Theta}
\{-{\rm trace}(A_\theta(x)D^2 v) + H_\theta(x,Dv)\}=c, \quad
x\in \T^N.
\end{eqnarray}
A byproduct is $|u(x,t)+ct|\leq C.$ 
The proofs of these results are classical (see for instance~\cite{bs01, ln13a})
so we skip them. In Section~\ref{sec:exples}, we introduce 
{\em superlinear Hamiltonians} for which
the above assumptions are satisfied.
Since the above basic assumptions will be used in all our results, for
shortness, we introduce a steady assumption collecting them
\begin{eqnarray}\label{steady}
&&\text{Assumptions~\eqref{H1_dege}, \eqref{Hunif-cont}, \eqref{sol-lip-hold} hold.}
\end{eqnarray}

We recall that
\begin{eqnarray}\label{defF}
&&\Sigma=\{x \in \T^N : A_\theta(x)=\sigma_\theta(x)\sigma_\theta^T(x)=0 
\ {\rm for \ all \ }\theta\in\Theta\},
\end{eqnarray}
and, for the two first convergence results which follow, we assume a
nondegeneracy assumption for $\sigma_\theta$ holds outside $\Sigma$:
\begin{eqnarray}\label{inver_dege}
&& \left\{\begin{array}{l}
\text{for all $\delta >0,$ there exists $\nu_\delta >0$ 
such that for all $\theta\in\Theta$}\\ 
\text{$A_\theta(x)=\sigma_\theta(x)\sigma_\theta(x)^T\geq \nu_\delta I$ 
 for $x\in \Sigma_\delta^C:=\{{\rm dist}(\cdot,\Sigma)> \delta\}$}.
\end{array}
\right.
\end{eqnarray}
This assumption is replaced by a weaker one in Section~\ref{sec:thm-dege}.

\subsection{A convergence result for strictly convex Hamiltonians}

The main assumption in this section is
\begin{eqnarray}\label{cvx_neuf}
&& 
\left\{\begin{array}{l}
\text{For all $x\in\Sigma,$ 
$0<\lambda <1$ and $p,q\in\R^N$ such that $p\not=q$,}\\[2mm]
\displaystyle\inf_{\theta\in\Theta} \{
\lambda H_{\theta}(x,p)+(1-\lambda) H_{\theta}(x,q)-H_{\theta}(x,\lambda p+(1-\lambda)q)\}> 0.
\end{array}\right.
\end{eqnarray}
This condition is a strict convexity assumption on the $H_\theta$'s on $\Sigma$
uniformly with respect to $\theta.$

%

\begin{thm}\label{strict_co_dege}
Suppose \eqref{steady},
\eqref{inver_dege}, \eqref{cvx_neuf} hold
and that $H_\theta (x,\cdot)$ is convex for every $x\in\T^N.$
Then $u(x,t)+ct \to u_\infty(x)$ in $C(\T^N)$  when $t\to +\infty,$ 
where $u$ is the solution of~\eqref{DHJE}
and $u_\infty$ is a solution of \eqref{sta}.
\end{thm}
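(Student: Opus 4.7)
The plan is to analyse the $\omega$-limit set of $\{u(\cdot,\cdot+t_n)+c(\cdot+t_n)\}_{n}$ in $C(\T^N\times[0,+\infty))$. This set is nonempty and compact by Arzel\`a--Ascoli, thanks to the uniform bound $|u(\cdot,t)+ct|\leq C$, the uniform Lipschitz bound in $x$ from~\eqref{sol-lip-hold}, and the uniform Lipschitz bound in $t$ that follows by comparing the solution with its time-translates. Any $\tilde u$ in this $\omega$-limit set is, by the standard stability of viscosity solutions, a Lipschitz solution on $\T^N\times\R$ of
\begin{equation*}
\partial_t\tilde u+\mathop{\rm sup}_{\theta\in\Theta}\{-{\rm trace}(A_\theta(x)D^2\tilde u)+H_\theta(x,D\tilde u)\}=c.
\end{equation*}
A classical $\omega$-limit argument reduces the desired convergence~\eqref{asympt-beh} to showing that \emph{every} such $\tilde u$ is nonincreasing in $t$: the pointwise monotone limit $u_\infty(x):=\lim_{t\to+\infty}\tilde u(x,t)$ is then a Lipschitz solution of~\eqref{sta}, and since it is the unique element of the $\omega$-limit set, the original trajectory converges uniformly to $u_\infty$.

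To establish the monotonicity I use the functional $P_\eta[\tilde u]$ from~\eqref{P-intro} and aim to prove $m_\eta:=\sup_{x\in\T^N}P_\eta[\tilde u](x,t)\leq 0$ for every $\eta>0$ (a direct check shows this supremum is independent of $t$). Arguing by contradiction, suppose $m_\eta>0$. The first main step is a linearisation lemma: $w:=P_\eta[\tilde u]$ is a viscosity subsolution of
\begin{equation*}
\partial_t w+\mathop{\rm inf}_{\theta\in\Theta}\{-{\rm trace}(A_\theta(x)D^2 w)\}-C|Dw|\leq 0.
\end{equation*}
To obtain this, I combine, at each test point $(x,t)$, the subsolution inequality for $\tilde u$ at time $t$ with the supersolution inequality at an optimal time $s_0=s_0(x,t)$ (the envelope relation giving $\partial_s\tilde u(x,s_0)=-\eta$); the Lipschitz estimate in~\eqref{Hunif-cont} absorbs the gradient discrepancy into the term $C|Dw|$, and the convexity of $H_\theta(x,\cdot)$ is used to legitimately combine the two Hamiltonian inequalities along an intermediate gradient.

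With this linearised subsolution in hand, let $x_0\in\T^N$ realise $m_\eta$. If $x_0\in\Sigma_\delta^C$ for some $\delta>0$, then by~\eqref{inver_dege} the linearised equation is uniformly parabolic in a neighbourhood of $(x_0,t)$, so the strong maximum principle applied to the nonpositive function $w-m_\eta$ propagates $w\equiv m_\eta$ throughout the connected component of $\Sigma_\delta^C$ containing $x_0$; letting $\delta\to 0$ shows that the maximum is in fact attained at some $x_0\in\Sigma$. On $\Sigma$ the diffusion vanishes and the equation for $\tilde u$ is formally first order; combining the sub- and supersolution inequalities at $(x_0,t)$ and $(x_0,s_0)$ via strict convexity~\eqref{cvx_neuf}, applied with $p=D\tilde u(x_0,t)$ and $q=D\tilde u(x_0,s_0)$, yields a strictly positive lower bound on $\inf_{\theta}\{\lambda H_\theta(x_0,p)+(1-\lambda)H_\theta(x_0,q)-H_\theta(x_0,\lambda p+(1-\lambda)q)\}$ that is incompatible with the assumption $m_\eta>0$. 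The hardest step is precisely this last one: although $A_\theta(x_0)=0$ on $\Sigma$, the viscosity test functions sample $A_\theta$ in a full neighbourhood of $\Sigma$, so the second-order terms $-{\rm trace}(A_\theta D^2\varphi)$ do not automatically vanish. They are controlled using~\eqref{H1_dege}: since $\sigma_\theta\in W^{1,\infty}$ and $\sigma_\theta$ vanishes on $\Sigma$, one has $|A_\theta(x)|\leq C\,{\rm dist}(x,\Sigma)^2$, so with a carefully tuned doubling penalty these second-order contributions shrink with the doubling scale and can be absorbed into the strict-convexity gap supplied by~\eqref{cvx_neuf}, closing the contradiction.
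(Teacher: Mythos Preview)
Your outline follows the introduction-level sketch of the paper, but it misses a device that the actual proof requires. The gap is in the final contradiction step on $\Sigma$: you invoke strict convexity~\eqref{cvx_neuf} with $p=D\tilde u(x_0,t)$ and $q=D\tilde u(x_0,s_0)$, but~\eqref{cvx_neuf} yields something only when $p\neq q$. Nothing in your argument rules out $p=q$; and when $p=q$, the sub- and supersolution inequalities at $(x_0,t)$ and $(x_0,s_0)$ collapse (using your own envelope relation $\partial_t\tilde u=-\eta$ at both times) to $-\eta+\sup_\theta H_\theta(x_0,p)\leq c$ and $-\eta+\sup_\theta H_\theta(x_0,p)\geq c$, which is consistent and gives no contradiction.

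The paper circumvents this by working not with $P_\eta[\tilde u]$ but with
\[
P_{\eta\mu}[\tilde u](x,t)=\sup_{s\geq t}\bigl\{\tilde u(x,t)-v(x)-\mu(\tilde u(x,s)-v(x))-\mu\eta(s-t)\bigr\},
\]
where $\mu>1$ and $v$ is a Lipschitz solution of~\eqref{sta}. The variable-tripling then produces \emph{three} viscosity inequalities: one for $\tilde u(\cdot,\bar t)$, one for $\tilde u(\cdot,\bar s)$, and a third for $v$. In the degenerate case where the three gradients coincide (the ``second case'' of Lemma~\ref{IMpo1}), strict convexity is useless, and it is precisely the third inequality together with the factor $\mu>1$ that forces $\mu\eta=0$ and closes the contradiction; your functional carries only two inequalities and cannot replicate this. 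Relatedly, the control of the second-order terms near $\Sigma$ in that case needs the specific test function $\langle x-x_\tau\rangle=\sqrt{\epsilon^2+|x-x_\tau|^2}$ and a quantitative estimate on $|\bar x-x_\tau|$; the bound $|A_\theta(x)|\leq C\,{\rm dist}(x,\Sigma)^2$ you cite is the right starting observation but is not by itself enough. (A minor slip: $P_\eta\leq 0$ gives $\tilde u$ \emph{nondecreasing} in $t$, not nonincreasing.)
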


Section~\ref{strict_dege} is devoted to the proof.

\subsection{A convergence result for non necessarily convex 
Hamiltonians}

We will assume the following
for the Hamiltonians $H_\theta$'s. Recall that $c$ denotes
the ergodic constant in~\eqref{sta}. There exists $\mu_0>1$
such that
\begin{eqnarray}\label{H10}
&& \left\{
\begin{array}{ll}
\text{(i) $H_\theta(x,\mu p)-\mu H_\theta(x,p)\geq (1-\mu )c$
for all $(x,p)\in \T^N\times \R^N$, $1<\mu< \mu_0,$}\\[2mm]
\text{(ii) There exists a, possibly empty, compact set $K$ of $\Sigma$ such that}\\
\text{\quad(a) $H_\theta(x,p) \ge c$ for all  $(x,p)\in K \times \R^{N},$}\\
\text{\quad(b) for all $x\in\Sigma,$ $p\in\R^N,$ $1<\mu\leq \mu_0,$
if $d(x,K)\not= 0,$ $p\not= 0$,  then}\\ 
\hspace*{1.2cm}\text{$\displaystyle
\mathop{\rm inf}_{\theta\in\Theta}\{H_\theta(x,\mu p)-\mu H_\theta(x,p)\}> (1-\mu )c$.}
\end{array}
\right.
\end{eqnarray}

\begin{thm}\label{mainresult}
Suppose that \eqref{steady}, 
\eqref{inver_dege} and \eqref{H10} hold.
Then $u(x,t)+ct \to u_\infty(x)$ in $C(\T^N)$  when $t\to +\infty,$ 
where $u$ is the solution of~\eqref{DHJE}
and $u_\infty$ is a solution of \eqref{sta}.
\end{thm}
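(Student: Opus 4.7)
The plan is to follow the architecture of the proof of Theorem~\ref{strict_co_dege} carried out in Section~\ref{strict_dege}, replacing the strict convexity~\eqref{cvx_neuf} by the Barles-Souganidis-type assumption~\eqref{H10} only at the very last step. First, the Lipschitz bound from~\eqref{sol-lip-hold} and the uniform bound $|u(x,t)+ct|\le C$ (inherited from~\eqref{sta}) make $\{u(\cdot,\cdot+\tau)+c(\cdot+\tau)\}_{\tau\ge 0}$ precompact in $C_{\rm loc}(\T^N\times[0,+\infty))$, and viscosity stability ensures every $\tilde u$ in its $\omega$-limit solves $\tilde u_t+\sup_\theta\{-{\rm trace}(A_\theta D^2\tilde u)+H_\theta(x,D\tilde u)\}=c$. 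Showing each such $\tilde u$ is nonincreasing in $t$ suffices to conclude, and as in Section~\ref{strict_dege} this reduces to proving $m_\eta:=\sup_{(x,t)} P_\eta[\tilde u](x,t)\le 0$ for every $\eta>0$, with $P_\eta$ as in~\eqref{P-intro}. Argue by contradiction that $m_\eta>0$.

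Since $\tilde u$ is simultaneously sub- and supersolution of the shifted equation, a standard doubling-of-variables computation shows $P_\eta[\tilde u]$ is a viscosity subsolution of the linearised inequality
\[
U_t+\inf_{\theta\in\Theta}\{-{\rm trace}(\sigma_\theta\sigma_\theta^T D^2U)\}-C|DU|\le 0.
\]
Combining this with the uniform ellipticity~\eqref{inver_dege} on each $\Sigma_\delta^C$ and a strong maximum principle argument (exactly as in Section~\ref{strict_dege}) forces any $x$-maximiser of $P_\eta[\tilde u](\cdot,t)$ to lie in $\Sigma$. This stage uses no convexity and reuses the corresponding lemmas of Section~\ref{strict_dege} verbatim.

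The new ingredient replaces the strict-convexity endgame on $\Sigma$. At a maximiser $x_0\in\Sigma$, pick $s_0>t_0$ realising (or near-realising) $P_\eta[\tilde u](x_0,t_0)=m_\eta$. For $\mu\in(1,\mu_0)$ close to $1$, assumption~\eqref{H10}(i) shows that $w_\mu:=\mu\tilde u-(\mu-1)(v+ct)$, with $(c,v)$ the ergodic pair from~\eqref{sta}, is itself a viscosity supersolution of the equation satisfied by $\tilde u$. Doubling variables between $w_\mu(x,s)$ and $\tilde u(x,t)+\eta(s-t)$ and sending $\mu\downarrow 1$ extracts first-order information at $x_0\in\Sigma$ where $A_\theta$ vanishes; the strict inequality in~\eqref{H10}(ii)(b), valid on $\Sigma\setminus K$ with $p\neq 0$, then produces a definite gain contradicting $m_\eta>0$. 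Two boundary cases must be handled separately: on $K$, part~\eqref{H10}(ii)(a) gives $H_\theta(x_0,p)\ge c$ for every $p$, which, inserted into the viscosity inequality coming from $P_\eta$, directly forces $\eta\le 0$; the case of a vanishing gradient at the contact point is dealt with by a perturbation and continuity argument.

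The main technical obstacle, flagged by the authors in the introduction, sits inside the last stage. Although $A_\theta(x_0)=0$ by definition of $\Sigma$, the doubling procedure produces contact points $(x_\eps,s_\eps,t_\eps)$ only close to $\Sigma$, where $A_\theta$ is small but nonzero, so the remaining second-order terms obstruct a direct reduction to the first-order assumption~\eqref{H10}. Since $\sigma_\theta\in W^{1,\infty}$ vanishes on $\Sigma$ by~\eqref{H1_dege}, one has $|\sigma_\theta(x)|\le C\,{\rm dist}(x,\Sigma)$ and hence $|{\rm trace}(A_\theta(x)D^2\phi)|\le C\,{\rm dist}(x,\Sigma)^2|D^2\phi|$; absorbing this error into the penalisation as $\eps\to 0$ is precisely the role of Lemma~\ref{IMpo1} from Section~\ref{strict_dege}. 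This lemma is the technical bridge that degrades the second-order viscosity inequalities into first-order information on $\Sigma$, allowing us to close the argument with~\eqref{H10} exactly as in the first-order theory of~\cite{bs00,bim13}.
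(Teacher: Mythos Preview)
Your overall architecture matches the paper's: reuse the machinery of Section~\ref{strict_dege} to locate the maximiser in~$\Sigma$, then replace only the endgame that used strict convexity. But your endgame has a genuine gap.

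The claim that $w_\mu:=\mu\tilde u-(\mu-1)(v+ct)$ is a viscosity supersolution by~\eqref{H10}(i), with $(c,v)$ the ergodic pair, is wrong for a general~$v$. After the doubling you would need
\[
H_\theta\big(x,\mu D\tilde u-(\mu-1)Dv\big)\ \ge\ \mu H_\theta(x,D\tilde u)-(\mu-1)H_\theta(x,Dv),
\]
which is exactly convexity of $H_\theta$ and is \emph{not} assumed in Theorem~\ref{mainresult}; condition~\eqref{H10}(i) only controls the one-parameter scaling $H_\theta(x,\mu p)-\mu H_\theta(x,p)$ and says nothing about a two-point combination unless $Dv\equiv 0$. (Your formula is also inconsistent: if $\tilde u$ already solves the shifted equation $\tilde u_t+\sup_\theta\{\cdots\}=c$, then so does the stationary~$v$, while $v+ct$ does not.) The paper's key observation, which you are missing, is that~\eqref{H10}(i) with $p=0$ yields $H_\theta(x,0)\le c$ (this is~\eqref{0sous-sol}), so $v\equiv 0$ is a \emph{subsolution} of~\eqref{sta} and one takes $v=0$ in the functional $P_{\eta\mu}$ of~\eqref{p-sc}. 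With this choice the variable $z$ and the gradient $q$ in~\eqref{defpq} disappear, the convexity inequality~\eqref{Hcvx123} needed in Lemma~\ref{lem:ineq-var} is replaced verbatim by~\eqref{H10}(i) (namely $H_\theta(x_\tau,p)-\mu H_\theta(x_\tau,p/\mu)\ge(1-\mu)c$), and the strict version~\eqref{H10}(ii)(b) then gives the contradiction in the analogue of Lemma~\ref{IMpo1} when $\bar p\neq 0$ and $x_\tau\notin K$; the case $\bar p=0$ is handled as in the second case there.

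Your treatment of $K$ is right in spirit but vague. The paper isolates this as a separate step (Lemma~\ref{ensK}): using~\eqref{H10}(ii)(a) and $\sigma_\theta=0$ on $K\subset\Sigma$, one shows $t\mapsto u(x_0,t)$ is nonincreasing for every $x_0\in K$; hence $\tilde u(x_0,\cdot)$ is constant in $t$, so $P_{\eta\mu}[\tilde u](x_0,t)=(1-\mu)\tilde u(x_0)\to 0$ as $\mu\downarrow 1$, and the maximiser $x_\tau$ must lie outside $K$ for $\mu$ close to~$1$ (see~\eqref{dist-a-K}). This is what licenses the use of~\eqref{H10}(ii)(b) at $x_\tau$.
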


The proof of this theorem is done in Section~\ref{mix_main}.

We make some comments about the assumptions. 
Conditions \eqref{H10}(i) and \eqref{H10}(ii)(b) are some kind of convexity
requirements but it may apply to some nonconvex Hamiltonians (see Section~\ref{sec:exples}).
Taking, $p=0$ in~\eqref{H10}(i), we obtain 
\begin{eqnarray}\label{0sous-sol}
H_\theta(x,0)\leq c, \qquad x\in\T^N, \theta\in\Theta, 
\end{eqnarray}
which implies
that $v\equiv 0$ is a subsolution of~\eqref{sta}.

Assumption~\eqref{H10} may be seen restrictive.
Indeed, in general one does not know the exact value of the ergodic
constant $c$ so it is difficult to check that $\eqref{H10}$ holds.
We have three motivations to state such a result. At first, 
there are some interesting cases for which we can calculate
the exact value of $c$ and~\eqref{H10} holds (see Proposition~\ref{valc-nr}).
It allows to treat some {\em Namah-Roquejoffre type} Hamiltonians, see
Section~\ref{subsec-nr}.
Secondly, this assumption
encompasses nonconvex Hamiltonians (see Section~\ref{sec:exples})
and such nonconvex cases are hard to deal with. 
Finally, it is worth pointing out that,
when there exist $C^2$ subsolutions of~\eqref{sta},
then Theorem~\ref{strict_co_dege} appears as an immediate corollary of 
Theorem~\ref{mainresult} (see Remark~\ref{conv-sous-solC2}).

\begin{prop}\label{valc-nr}
Assume~\eqref{steady} and
\begin{eqnarray}\label{hypHF}
&& 
H_\theta(x,p)\geq H_\theta(x,0) \text{ for $(x,p)\in\Sigma\times\R^N,$
$\theta\in\Theta.$}
\end{eqnarray}
If, in addition, either
\begin{eqnarray}\label{supTS}
\displaystyle \mathop{\rm sup}_{x\in\T^N, \theta\in\Theta} H_\theta(x,0)
=  \mathop{\rm sup}_{x\in\Sigma, \theta\in\Theta} H_\theta(x,0)
\end{eqnarray}
holds or~\eqref{inver_dege} and
\begin{eqnarray}\label{Hpresque-cvx}
&& H_\theta(x,\mu p)-\mu H_\theta(x,p)\geq (1-\mu )H_\theta(x,0)
\text{ for $(x,p)\in\T^N\times\R^N,$
$\theta\in\Theta,$ $\mu >1,$}
\end{eqnarray}
hold, then
$\displaystyle c= \mathop{\rm sup}_{x\in\Sigma, \theta\in\Theta} H_\theta(x,0).$
\end{prop}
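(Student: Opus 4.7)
The plan is to establish the two inequalities $c \leq c^*$ and $c \geq c^*$, where $c^* := \sup_{x \in \Sigma, \theta \in \Theta} H_\theta(x, 0)$, working both with the ergodic function $v$ from~\eqref{sta} and with the approximate discounted solutions $v_\lambda$ (for which $-\lambda v_\lambda \to c$ uniformly as $\lambda \to 0$, a standard consequence of~\eqref{steady}).

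For the lower bound $c \geq c^*$, the key observation is that on $\Sigma$, where $A_\theta(x)=0$ for all $\theta$, the ergodic equation reduces formally to the first-order statement $\sup_{\theta} H_\theta(x, Dv(x)) \leq c$ for $x \in \Sigma$, viewing $v$ as a viscosity subsolution. Combined with~\eqref{hypHF}, which gives $H_\theta(x, p) \geq H_\theta(x, 0)$ on $\Sigma \times \R^N$, this yields $\sup_\theta H_\theta(x, 0) \leq c$ on $\Sigma$, hence $c^* \leq c$ after taking the supremum. To make this rigorous despite $v$ being only Lipschitz, I would replace $v$ (or $v_\lambda$) by its sup-convolution $v^\varepsilon$, which is semiconvex and therefore admits a nonempty superdifferential at every point; standard theory (cf.~Crandall-Ishii-Lions) shows that $v^\varepsilon$ is a viscosity subsolution of a perturbed equation at $x \in \Sigma$, with the Hamiltonian and diffusion evaluated at a nearby argmax $y_\varepsilon$ with $|y_\varepsilon - x| = O(\varepsilon)$. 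Passing $\varepsilon \to 0$ then delivers the desired pointwise inequality on $\Sigma$.

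For the upper bound $c \leq c^*$ under~\eqref{supTS}, the argument is immediate: the hypothesis gives $\sup_\theta H_\theta(x,0) \leq c^*$ for every $x \in \T^N$, so $\bar v \equiv 0$ is a classical (hence viscosity) subsolution of
\[
\sup_{\theta \in \Theta}\{-{\rm trace}(A_\theta D^2 \bar v) + H_\theta(x, D\bar v)\} = c^*,
\]
and the characterization of $c$ as the infimum of constants admitting a Lipschitz subsolution yields $c \leq c^*$. Under the alternative hypotheses~\eqref{inver_dege} and~\eqref{Hpresque-cvx}, $\bar v \equiv 0$ is only a subsolution with the a priori larger right-hand side $\sup_{\T^N, \theta} H_\theta(\cdot, 0)$, so a refined construction is needed. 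Here I would use~\eqref{Hpresque-cvx} --- equivalent to the one-sided convexity $H_\theta(x, \nu p) \leq \nu H_\theta(x, p) + (1-\nu) H_\theta(x, 0)$ for $0 < \nu < 1$ --- to combine $\bar v \equiv 0$ with a smooth corrector built from the strict elliptic subsolution provided by~\eqref{inver_dege} on each $\Sigma_\delta^C$, producing a global subsolution with right-hand side $c^*$, and then let $\delta \to 0$.

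The main obstacle is the rigorous execution of the lower bound: at $x_0 \in \Sigma$, the superjet of $v$ (or of $v_\lambda$) may be empty, while the sup-convolution argmaxes $y_\varepsilon$ generically lie outside $\Sigma$, where $A_\theta(y_\varepsilon) \neq 0$. The delicate point is to control the second-order contribution ${\rm trace}(A_\theta(y_\varepsilon) Y_\varepsilon)$, which requires balancing the Lipschitz decay $|A_\theta(y_\varepsilon)| \leq C|y_\varepsilon - x_0|$ from~\eqref{H1_dege} against the semiconvexity bound $Y_\varepsilon \geq -\varepsilon^{-1}I$ so that this contribution does not survive in the limit $\varepsilon \to 0$.
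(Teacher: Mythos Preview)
Your overall outline (split into $c\ge c^*$ and $c\le c^*$) matches the paper, and your treatment of the upper bound under~\eqref{supTS} is essentially the paper's argument. However, two parts contain genuine gaps.

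\textbf{The lower bound.} Your sup-convolution argument breaks down at the sign you need. A semiconvex function has nonempty \emph{sub}differential, not superdifferential, so your statement that $v^\varepsilon$ ``admits a nonempty superdifferential at every point'' is simply false. If instead you go to Alexandrov points $x_n$ of $v^\varepsilon$ near $x_0\in\Sigma$, the subsolution inequality gives
\[
H_\theta(y_\varepsilon,Dv^\varepsilon(x_n))\le c+\operatorname{trace}\bigl(A_\theta(y_\varepsilon)\,D^2v^\varepsilon(x_n)\bigr),
\]
and you now need an \emph{upper} bound on the trace. But semiconvexity only gives $D^2v^\varepsilon\ge -\varepsilon^{-1}I$, which is the wrong side: together with $A_\theta\ge 0$ it yields a \emph{lower} bound on the trace, not an upper one. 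So the balancing you propose does not close. The paper sidesteps this by producing a $C^\infty$ subsolution $v_{\lambda\epsilon}=\rho_\epsilon\!*\!v_\lambda^\epsilon$ via the Barles--Jakobsen shaking-of-coefficients trick; at $\hat x\in\Sigma$ the second-order term is then \emph{exactly} zero and~\eqref{hypHF} applies directly. An alternative, simpler repair (used in the paper's Lemma~\ref{ensK}) is to penalize $v_\lambda(x)-|x-x_0|^2/\epsilon^2$; at the maximizer $\bar x$ the test matrix $2I/\epsilon^2$ is explicit, $|\bar x-x_0|\le C\epsilon^2$, and $|\operatorname{trace}(A_\theta(\bar x)\tfrac{2I}{\epsilon^2})|\le C|\bar x-x_0|^2/\epsilon^2\to 0$, which \emph{does} give the needed upper bound.

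\textbf{The upper bound under~\eqref{Hpresque-cvx}.} Your plan---interpolate $\bar v\equiv 0$ with an unspecified ``smooth corrector from~\eqref{inver_dege}''---is too vague, and it is not clear how~\eqref{Hpresque-cvx} would let you glue a subsolution on $\Sigma_\delta$ to one on $\Sigma_\delta^C$ with right-hand side exactly $c^*$. The paper's route is quite different: it compares the discounted solutions at two scales, $\hat v_\lambda$ and $\gamma\hat v_{\lambda/\gamma}$ for $\gamma>1$, uses~\eqref{Hpresque-cvx} to derive a linear inequality for the difference $w_{\lambda\gamma}=\hat v_\lambda-\gamma\hat v_{\lambda/\gamma}$, and then invokes the strong maximum principle (this is where~\eqref{inver_dege} is used) to force $\max w_{\lambda\gamma}$ to be attained on $\Sigma$. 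Evaluating there and sending $\lambda\to 0$ gives $c\le c^*$. This scaling-plus-strong-maximum-principle mechanism is the missing idea in your sketch.
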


This proposition, the proof of which is given in Section~\ref{mix_main},
is used to apply Theorem~\ref{mainresult}
for Hamiltonians of {\rm Namah-Roquejoffre type} in Section~\ref{sec:exples}.
We see that the value of the ergodic constant is affected by the second-order terms
in the sense that it is not the same as for~\eqref{DHJE} with $\sigma_\theta\equiv 0.$ 
Assumption~\eqref{supTS} requires that the supremum of $H_\theta(\cdot, 0)$ is actually
achieved where the diffusion vanishes. Assumption~\eqref{Hpresque-cvx} holds automatically
when $H_\theta$ is convex.

\begin{rem}\label{conv-sous-solC2}
We sketch the proof of the fact that,
if there exists a $C^2$ subsolution of~\eqref{sta},
then Theorem~\ref{strict_co_dege} is a corollary of
Theorem~\ref{mainresult}.
Assuming that $u$ is the solution of \eqref{DHJE} under the assumptions
of~Theorem~\ref{strict_co_dege}
 and $v$ is a $C^2$ subsolution  of \eqref{sta},
we set $w=u+ct-v.$ Then $w$ is the bounded solution of
\begin{eqnarray*}
\displaystyle\frac{\partial w}{\partial t}-{\rm trace}(A(x)D^2w)+ H(x, Dv+Dw )-H(x, Dv)-g(x)=0,
\end{eqnarray*} 
where $g(x):={\rm trace}(A(x)D^2 v)-H(x, Dv )+c \ge 0$ is continuous
since $v$ is {\em $C^2.$}
Introducing the new Hamiltonian $G(x,p)=H(x, p+Dv )-H(x, Dv)-g(x),$
it is not difficult to check that the strict convexity assumption~\eqref{cvx_neuf}
for $H$ implies that
$G$ satisfies~\eqref{H10} with $c=0$ and
$K=\emptyset$ (for $p$ bounded which is enough since $u$ and $v$ are
Lipschitz continuous in $x$).  
We then apply Theorem~\ref{mainresult} to the new equation
to  obtain the large time behavior of $u.$
Actually, it is possible to generalize such a proof
when there exists a {\em $C^{1,1}$} subsolution of~\eqref{sta}
but the proof is much more involved.
We mention this slight extension because
it is known (Bernard~\cite{bernard07b}) that there exists {\em $C^{1,1}$} subsolutions
of~\eqref{sta} for first order HJ (i.e., when $\Sigma=\T^N$)
under general assumptions.
\end{rem}

\subsection{A more general result of convergence}
\label{sec:thm-dege}

We now generalize the two previous results when~\eqref{inver_dege}
is replaced by a weaker assumption. The proof of the results
of this section are given in Section~\ref{sec:dege}.

Before stating our main assumption, let us introduce
some notations. We denote by $\pi : \R^N\to \T^N$
the canonical projection and we add a superscript $\sim$ to
the coset representatives of the objects defined on $\T^N.$
For instance, $\tilde{\Sigma}$ is a 1-periodic subset of $\R^N$
such that $\pi (\tilde{\Sigma})=\Sigma$ and 
$\tilde{\sigma}_\theta(\tilde{x})=\sigma_\theta(\pi(\tilde{x}))$
for any $\tilde{x}\in\R^N.$

We assume, for some $C>0,$
\begin{eqnarray}\label{sur-sol-stricte}
&& \left\{\begin{array}{l}
\text{For all $\delta >0,$ 
there exists $\tilde{\psi}_\delta\in C^2(\R^N)$ and an open set
$\Omega_\delta \subset\subset\R^N$
such that}\\
\text{$[0,1]^N \subset \Omega_\delta,$} \  \
\text{ $\tilde{\psi}_\delta \leq 0$ in $\Omega_\delta$,}\  \
\text{ $\tilde{\psi}_\delta \geq 0$ in $\Omega_\delta^C$,}\\[2mm]
\displaystyle\mathop{\rm inf}_{\theta\in\Theta}
\{-{\rm trace}(\tilde{A}_\theta(\tilde{x})D^2 \tilde{\psi}_\delta(\tilde{x}))\} 
-C|D\tilde{\psi}_\delta(\tilde{x})| >0
\ \text{ for } \tilde{x}\in \tilde{\Sigma}_\delta^C\cap \Omega_\delta,\\[2mm]
\text{ where $ \tilde{\Sigma}_\delta=\{\tilde{x}\in\R^N : 
{\rm dist}(\tilde{x}, \tilde{\Sigma})\leq \delta\}.$}
\end{array}
\right.
\end{eqnarray}

\begin{thm}\label{cas-degenere}
We assume that either the assumptions of Theorem~\ref{mainresult}
or the assumptions of Theorem~\ref{strict_co_dege} hold,
where~\eqref{inver_dege} is replaced by~\eqref{sur-sol-stricte}
in both cases.
Then $u(x,t)+ct$ converges uniformly to a solution 
$v(x)$ of \eqref{sta} when $t\to +\infty.$ 
\end{thm}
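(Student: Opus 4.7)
The plan is to follow the same architecture as the proofs of Theorems~\ref{strict_co_dege} and~\ref{mainresult}. As explained in the introduction, it suffices to show that every $\tilde u$ in the $\omega$-limit set of $u+ct$ is nonincreasing in $t$, which in turn reduces to proving that, for each $\eta>0$, the quantity
\begin{eqnarray*}
\tilde{m}_\eta(t) := \sup_{x\in\T^N} P_\eta[\tilde u](x,t), \qquad P_\eta[\tilde u](x,t)=\sup_{s\geq t}\{\tilde u(x,t)-\tilde u(x,s)-\eta(s-t)\},
\end{eqnarray*}
is a nonpositive constant $\tilde m_\eta$. Assuming for contradiction that $\tilde m_\eta>0$, the stability of viscosity solutions and the doubling-of-variables machinery already used in the previous sections yield that $P_\eta[\tilde u]$ is a subsolution in $\T^N\times(0,+\infty)$ of the linearized equation
\begin{eqnarray*}
\frac{\partial U}{\partial t} + \inf_{\theta\in\Theta}\{-\mathrm{trace}(A_\theta(x)D^2 U)\} - C|DU| \leq 0.
\end{eqnarray*}

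The crux of the generalization is the step where, previously, \eqref{inver_dege} and a strong-maximum-principle argument were used to force the supremum defining $\tilde m_\eta$ to be attained on $\Sigma$. Here I would replace that argument by a barrier argument based on $\tilde\psi_\delta$. Lift $P_\eta[\tilde u]$ to a 1-periodic function $\tilde P$ on $\R^N\times(0,+\infty)$ and consider, on the compact set $\overline{\Omega_\delta}\times[T,T']$, the auxiliary function $\tilde P(\tilde x,t)-\epsilon\tilde\psi_\delta(\tilde x)$ for small $\epsilon>0$. Since $\tilde\psi_\delta\leq 0$ on $\Omega_\delta\supset[0,1]^N$ and $\tilde\psi_\delta\geq 0$ on $\Omega_\delta^C$, the supremum of this function over $\overline{\Omega_\delta}$ is at least $\tilde m_\eta$ and is achieved at an interior point. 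If that point lay in $\tilde\Sigma_\delta^C\cap\Omega_\delta$, we could test the subsolution property of $\tilde P$ against $\epsilon\tilde\psi_\delta$ (plus a time bump to handle the time variable): the strict supersolution inequality in \eqref{sur-sol-stricte} then contradicts the subsolution inequality for $\tilde P$. Hence the maximum is necessarily attained in $\tilde\Sigma_\delta$. Letting $\epsilon\to 0$ and then $\delta\to 0$, and projecting back to $\T^N$, we conclude that $\tilde m_\eta$ is attained at some point of $\Sigma$.

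Once the maximum is located on $\Sigma$, the equation is formally of first order at that point and the arguments of Section~\ref{strict_dege} (under \eqref{cvx_neuf}) or of Section~\ref{mix_main} (under \eqref{H10}) apply \emph{verbatim}: the strict convexity of $H_\theta$, or condition \eqref{H10}, is used to derive the contradiction $\tilde m_\eta\leq 0$. Thus $P_\eta[\tilde u]\leq 0$ everywhere, so $\tilde u$ is nonincreasing in $t$ and bounded below (since $u+ct$ is bounded by a consequence of \eqref{sol-lip-hold}), hence converges pointwise to some $u_\infty$. Equicontinuity of $\{u(\cdot,t)+ct\}$ coming from the Lipschitz bound in \eqref{sol-lip-hold} upgrades this to uniform convergence, and stability identifies $u_\infty$ as a solution of \eqref{sta}.

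The main obstacle I anticipate is carrying out the barrier step cleanly while controlling the second-order terms near $\Sigma$, exactly as in the proof of Lemma~\ref{IMpo1}. The lift to $\R^N$ is necessary because $\tilde\psi_\delta$ is given only on the universal cover (a genuinely periodic strict supersolution need not exist), so the comparison must be performed on $\Omega_\delta$ rather than $\T^N$; this is precisely why the assumption is stated in terms of $\tilde\psi_\delta$ and $\Omega_\delta\supset[0,1]^N$. Everything else, including the treatment of the time variable in the definition of $P_\eta$ and the passage to the $\omega$-limit, is taken from the previous sections.
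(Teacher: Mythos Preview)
Your proposal is essentially correct and follows the paper's own strategy: keep the entire architecture of Sections~\ref{strict_dege}--\ref{mix_main} and modify only the step corresponding to Lemma~\ref{rameneF}, replacing the strong maximum principle (which needed~\eqref{inver_dege}) by a barrier argument based on $\tilde\psi_\delta$. Your identification of why the lift to $\R^N$ via $\Omega_\delta\supset[0,1]^N$ is needed is exactly right.

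There is, however, one technical difference worth noting. You work directly with the parabolic subsolution $P_{\eta\mu}[\tilde u](x,t)$ and propose a ``time bump'' to localize in time when testing against $\epsilon\tilde\psi_\delta$. This can be made to work, but the bookkeeping (balancing the time-bump parameter against $\epsilon$ so that the contribution $\partial_t\phi$ does not kill the strict inequality in~\eqref{sur-sol-stricte}) is more delicate than you indicate. The paper avoids this entirely by first passing to the \emph{stationary} half-relaxed upper limit
\[
\overline U(x)=\limsup_{y\to x,\,t\to+\infty}P_{\eta\mu}[\tilde u](y,t),
\]
which, by stability, is a subsolution of the purely elliptic inequality
$\min\{\overline U,\ \inf_\theta\{-\mathrm{tr}(A_\theta D^2\overline U)\}-C|D\overline U|\}\le 0$ on $\T^N$. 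The barrier argument is then performed on $\overline U$ with no time variable at all, showing that $\mathrm{argmax}\,\overline U\cap\Sigma\neq\emptyset$. Since Lemma~\ref{IMpo1} requires an actual maximum point $(x_\tau,\tau)$ with $x_\tau\in\Sigma$ and $\tau>0$, the paper then performs a second extraction (their Step~2): from $\overline U(\hat x)=m_{\eta\mu}$ one gets $(x_n,t_n)\to(\hat x,+\infty)$ with $P_{\eta\mu}[\tilde u](x_n,t_n)\to m_{\eta\mu}$, and replacing $\tilde u$ by a further accumulation point $\hat u$ of $\tilde u(\cdot,\cdot+t_n-1)$ yields $P_{\eta\mu}[\hat u](\hat x,1)=m_{\eta\mu}$ with $\hat x\in\Sigma$. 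Your route trades this two-step extraction for a direct parabolic comparison; the paper's route trades the parabolic comparison for an extra compactness argument. Both are valid.

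One small notational point: in your display you write $P_\eta[\tilde u]$ (i.e.\ $\mu=1$, no $v$), but to invoke Lemma~\ref{IMpo1} ``verbatim'' you must carry the full $P_{\eta\mu}[\tilde u]$ of~\eqref{p-sc} with $\mu>1$ close to~$1$ and with a Lipschitz solution $v$ of~\eqref{sta}; the barrier step itself must be run for this $P_{\eta\mu}$, not just for $P_{\eta 1}$.
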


The difference with the previous theorems is that we do not assume
the uniform ellipticity assumption~\eqref{inver_dege}.
We consider the weaker assumption~\eqref{sur-sol-stricte} instead
(see Proposition~\ref{ell-sur-sol}). 
This latter assumption
allows to deal with some fully nonlinear everywhere degenerate equations.
It is written in a tedious
way since, in some cases, we need to construct a supersolution which is not
1-periodic (and therefore it is not a function on $\T^N$).

\begin{prop}\label{ell-sur-sol}
If~$\Sigma\not=\emptyset$ and \eqref{inver_dege} holds, 
then~\eqref{sur-sol-stricte} holds. 
\end{prop}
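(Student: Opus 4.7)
The plan is to give an explicit construction based on an exponential barrier centered at a point of $\tilde{\Sigma}$. Since $\Sigma\neq\emptyset$, I would pick a coset representative $x_0\in\tilde{\Sigma}\cap[0,1]^N$ and, for parameters $R>\sqrt{N}$ and $\alpha>0$ to be chosen, set
\[
\Omega_\delta:=B(x_0,R),\qquad
\tilde{\psi}_\delta(\tilde{x}):=e^{-\alpha R^2}-e^{-\alpha|\tilde{x}-x_0|^2}.
\]
Then $\tilde{\psi}_\delta\in C^2(\R^N)$, $\Omega_\delta$ is open and bounded with $[0,1]^N\subset\Omega_\delta$, and the required sign conditions on $\tilde{\psi}_\delta$ are immediate from the strict monotonicity of $r\mapsto e^{-\alpha r^2}$.

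Writing $r:=|\tilde{x}-x_0|$, a direct calculation yields
\[
D\tilde{\psi}_\delta=2\alpha(\tilde{x}-x_0)\,e^{-\alpha r^2},\quad
D^2\tilde{\psi}_\delta=e^{-\alpha r^2}\bigl(2\alpha I-4\alpha^2(\tilde{x}-x_0)(\tilde{x}-x_0)^T\bigr),
\]
and hence
\[
-{\rm trace}(\tilde{A}_\theta D^2\tilde{\psi}_\delta)-C|D\tilde{\psi}_\delta|
=2\alpha e^{-\alpha r^2}\bigl(-{\rm trace}(\tilde{A}_\theta)+2\alpha\langle\tilde{A}_\theta(\tilde{x}-x_0),\tilde{x}-x_0\rangle-Cr\bigr).
\]
The main conceptual point lives here: the Hessian of the naive candidate $|\tilde{x}-x_0|^2-R^2$ has the wrong sign for the desired strict supersolution inequality, and the exponential form is chosen precisely to produce the positive quadratic contribution $2\alpha\langle\tilde{A}_\theta(\tilde{x}-x_0),\tilde{x}-x_0\rangle$ that, on $\tilde{\Sigma}_\delta^C$, can absorb both the trace term and the gradient term.

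To conclude, I would use that for $\tilde{x}\in\tilde{\Sigma}_\delta^C\cap\Omega_\delta$ the identity ${\rm dist}(\tilde{x},\tilde{\Sigma})={\rm dist}_{\T^N}(\pi(\tilde{x}),\Sigma)$ (a consequence of $1$-periodicity) combined with $x_0\in\tilde{\Sigma}$ forces $r>\delta$, that \eqref{inver_dege} lifted to $\R^N$ yields $\tilde{A}_\theta(\tilde{x})\geq\nu_\delta I$ uniformly in $\theta$, and that \eqref{H1_dege} gives a uniform bound ${\rm trace}(\tilde{A}_\theta)\leq M_0$. Substituting, the desired inequality reduces to the scalar claim
\[
2\alpha\nu_\delta r^2-Cr-M_0>0\quad\text{for all }r\geq\delta,
\]
a convex quadratic in $r$; it suffices to pick $\alpha$ large enough so that this quadratic is positive at $r=\delta$ and that its minimum lies at some $r^\ast\leq\delta$, which gives an explicit threshold depending only on $C$, $\nu_\delta$, $\delta$, and $M_0$. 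No real obstacle arises: the argument is just an explicit construction followed by parameter tuning, and essentially all the content is in realizing that the exponential of the squared distance (rather than the squared distance itself) is the right ansatz.
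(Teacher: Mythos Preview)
Your proof is correct and follows essentially the same approach as the paper: the paper also translates so that the center lies in $\tilde{\Sigma}$, takes the same exponential barrier $e^{-\gamma r_\delta^2}-e^{-\gamma|x|^2}$ with $\Omega_\delta=B(0,r_\delta)$ for $r_\delta>\sqrt{N}$, performs the identical computation, and chooses $\gamma$ large to make the bracket positive. The only cosmetic difference is that the paper absorbs the factor $r^2$ into its lower bound for $|\tilde\sigma_\theta(x)x|^2$ and then bounds $|x|\le r_\delta$ from above, whereas you keep the $r$-dependence explicit and argue via the convex quadratic in $r$; both lead to the same threshold on the exponential parameter.
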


It follows that Theorems~\ref{strict_co_dege} and~\ref{mainresult} are
corollary of Theorem~\ref{cas-degenere} when $\Sigma\not=\emptyset$. 
We can apply the theorem to obtain the convergence for some everywhere 
degenerate equations. Let us give an application.

\begin{prop}\label{cor-degenere}
Assume \eqref{H1_dege}, \eqref{Hunif-cont},
\begin{eqnarray}\label{cont-x-theta}
&& (x,\theta)\in \T^N\times \Theta \mapsto \sigma_\theta (x)
\text{ is continuous}, \quad \text{$\Theta$ is compact,}
\end{eqnarray}
\begin{eqnarray}\label{F-bord-tore}
&& \bigcup_{1\leq i\leq N}\{x=(x_1,\cdots, x_N)\in \T^N: x_i=0\} \subset \Sigma
\end{eqnarray}
and
\begin{eqnarray}\label{non-deg-sigma}
&& \bigcup_{x\in \Sigma^C, \theta\in\Theta} {\rm ker}(\sigma_\theta(x))\cap \mathbb{S}^{N-1}
\not= \mathbb{S}^{N-1}:=\{x\in\R^N : |x|=1\}.
\end{eqnarray}
Then Assumption~\eqref{sur-sol-stricte} holds.
\end{prop}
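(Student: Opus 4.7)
The goal is to verify assumption \eqref{sur-sol-stricte}; the plan is to construct $(\tilde{\psi}_\delta,\Omega_\delta)$ in three stages from \eqref{non-deg-sigma}, \eqref{F-bord-tore}, and the directional ellipticity they produce together. First I would extract a \emph{uniform transverse direction}: \eqref{non-deg-sigma} says that $\bigcup_{x\in \Sigma^C,\theta\in\Theta}\ker\sigma_\theta(x)\cap \mathbb{S}^{N-1}$ is a proper subset of $\mathbb{S}^{N-1}$, so one may fix $e\in\mathbb{S}^{N-1}$ outside that union, giving $\sigma_\theta(x)^T e\not=0$ for every $(x,\theta)\in \Sigma^C\times\Theta$. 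The Lipschitz regularity of $\sigma_\theta$ in \eqref{H1_dege}, the joint continuity in \eqref{cont-x-theta}, and the compactness of $\Theta$ make $(x,\theta)\mapsto |\sigma_\theta(x)^T e|$ continuous and strictly positive on $\Sigma^C\times\Theta$, hence uniformly bounded below by a positive constant on any compact subset.

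Next I would use \eqref{F-bord-tore} to localise $\tilde{\Sigma}_\delta^C$. Lifted to $\R^N$ and combined with $1$-periodicity, $\partial[0,1]^N\subset \Sigma$ forces $\tilde{\Sigma}$ to contain every integer coordinate hyperplane $\{\tilde{x}:\tilde{x}_i\in\mathbb{Z}\}$, so $\tilde{\Sigma}_\delta^C$ is contained in the union of core cubes $k+(\delta,1-\delta)^N$, $k\in\mathbb{Z}^N$. Once $\Omega_\delta$ is chosen bounded with $[0,1]^N\subset\Omega_\delta$, the set $\ov{\Omega_\delta}\cap\tilde{\Sigma}_\delta^C$ is compact and contained in $\tilde{\Sigma}^C$; the previous step then supplies a constant $\nu_\delta>0$ with $|\sigma_\theta(\pi(\tilde{x}))^T e|\geq \nu_\delta$ uniformly on it for every $\theta\in\Theta$.

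For the construction I would set $x_0=(1/2,\ldots,1/2)$, pick $R>\sqrt{N}/2$ so that $B(x_0,R)\supset [0,1]^N$, and fix a $C^2$ cutoff $\chi$ supported in $B(x_0,R)$ which is identically $1$ on an open neighbourhood of the compact set $\ov{B(x_0,R)}\cap\tilde{\Sigma}_\delta^C$. I would then take
\[\tilde{\psi}_\delta(\tilde{x}):=A\bigl(|\tilde{x}-x_0|^2-R^2\bigr)-B\,\chi(\tilde{x})\,e^{-\beta\langle e,\tilde{x}\rangle},\qquad \Omega_\delta:=\{\tilde{\psi}_\delta<0\}.\]
The quadratic piece is nonnegative outside $B(x_0,R)$, so $\Omega_\delta\subset B(x_0,R)$ is bounded; since it is strictly negative on $[0,1]^N$ and the exponential correction has the same sign there, $[0,1]^N\subset\Omega_\delta$. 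Wherever $\chi\equiv 1$, the Hessian of the exponential term collapses to $-B\beta^2 e^{-\beta\langle e,\tilde{x}\rangle}ee^T$, and tracing against $\tilde{A}_\theta$ yields $B\beta^2 e^{-\beta\langle e,\tilde{x}\rangle}|\sigma_\theta^T e|^2\geq B\beta^2 e^{-\beta\langle e,\tilde{x}\rangle}\nu_\delta^2$. For $\beta>C/\nu_\delta^2$ and $B$ large, this dominates both the bounded trace contribution $2A\,{\rm trace}(\tilde{A}_\theta)$ from the quadratic piece and the first-order term $C|D\tilde{\psi}_\delta|$, yielding the strict inequality in \eqref{sur-sol-stricte} on all of $\tilde{\Sigma}_\delta^C\cap\Omega_\delta$---this set lying entirely inside the plateau of $\chi$ by design, so the (complicated) commutator terms $D\chi, D^2\chi$ never enter the estimate.

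The hard part will be coordinating the parameter choices consistently: $\nu_\delta$ depends on $R$ through the set $\ov{B(x_0,R)}\cap\tilde{\Sigma}_\delta^C$, so one must first fix $R$ (together with $\nu_\delta$ and the plateau of $\chi$), then choose $\beta>C/\nu_\delta^2$, and finally inflate $B$ so that the exponential second-order gain beats both the (bounded) trace and first-order contributions uniformly on the compact set $\ov{\Omega_\delta}$. The rationale behind the cutoff $\chi$ is precisely to restrict the delicate Hessian computation to the plateau where the ellipticity in direction $e$ is genuinely usable, while letting the quadratic piece alone dictate the sign structure far from $[0,1]^N$.
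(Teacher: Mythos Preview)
Your overall strategy---extract from \eqref{non-deg-sigma} a direction $e$ transverse to all the kernels and build an exponential barrier whose Hessian is a multiple of $ee^T$---is the same as the paper's in spirit (the paper uses a radial exponential $-e^{\gamma|x-y_\delta|^2}$ centered at a far-away point $y_\delta=\lambda\xi_\delta$, so that $x-y_\delta$ stays in a small spherical cap about $-\xi_\delta$). But there is a genuine gap in your cutoff step, and it is precisely where \eqref{F-bord-tore} is supposed to do real work.

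You ask for $\chi$ supported in $B(x_0,R)$ and identically $1$ on a neighbourhood of $\overline{B(x_0,R)}\cap\tilde{\Sigma}_\delta^C$. This is possible only if that set stays away from $\partial B(x_0,R)$, i.e.\ if the sphere of radius $R$ avoids $\tilde{\Sigma}_\delta^C$. For $N\geq 2$ this fails for every admissible $R$: by periodicity $\tilde{\Sigma}_\delta^C$ contains all the cubes $k+(\delta,1-\delta)^N$, and since $R>\sqrt{N}/2\geq 1/2+\delta$ the sphere necessarily cuts through neighbouring cells (for instance in $N=2$, with any $R$ just above $\sqrt{2}/2$, the point $(\tfrac12+R,\tfrac12)\in\partial B(x_0,R)$ lies in $(1+\delta,2-\delta)\times(\delta,1-\delta)\subset\tilde{\Sigma}_\delta^C$). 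Hence no such $\chi$ exists, your set $\overline{B(x_0,R)}\cap\tilde{\Sigma}_\delta^C$ is not even compact, and you cannot conclude that $\tilde{\Sigma}_\delta^C\cap\Omega_\delta$ sits inside the plateau of $\chi$. The paper avoids this by exploiting \eqref{F-bord-tore} directly: since $\partial[0,1]^N\subset\tilde{\Sigma}$, the set $\tilde{\Sigma}_{\delta/2}^C\cap[0,1]^N$ is contained in $(\delta/2,1-\delta/2)^N$, hence compactly contained in $(0,1)^N$; the barrier is built there, extended smoothly to $0$ in the $\delta/4$-collar near $\partial[0,1]^N$ (which is disjoint from $\tilde{\Sigma}_{\delta/2}^C$), and then by $0$ outside. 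Your construction is easily repaired the same way---replace the ball by the cube and let the cutoff live between $(\delta,1-\delta)^N$ and $(0,1)^N$---but as written the argument does not close.

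A smaller slip: $e\notin\ker(\sigma_\theta(x))$ yields $\sigma_\theta(x)e\neq 0$, not $\sigma_\theta(x)^T e\neq 0$, whereas the trace you need is ${\rm trace}(A_\theta ee^T)=|\sigma_\theta(x)^T e|^2$. The paper's computation contains the same conflation, and in all the examples given one has $\ker\sigma_\theta=\ker\sigma_\theta^T$, so this is harmless in practice but worth noting.
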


The assumption~\eqref{F-bord-tore} means that the boundary of the cube $[0,1]^N$
is contained in $\Sigma$; more generally, we need the connected components
of $\tilde{\Sigma}^C$ to be bounded in $\R^N.$ 
In  $\Sigma^C,$  ${\rm ker}(\sigma_\theta(x))$
is at most an hyperplane. The assumption~\eqref{non-deg-sigma} means that the
union of these hyperplanes does not fulfill the whole space. 
Some concrete examples of applications are given in Section~\ref{sec:exples}.

\section{Applications and examples}
\label{sec:exples}

\subsection{Superlinear Hamiltonians}
We first introduce an assumption on the $H_\theta$'s, called {\em superlinear}
in~\cite{bs01}, under which the steady assumptions of Section~\ref{sec:basic-assumptions}
hold.
\begin{eqnarray}\label{BSsuperlinear}
&&  \left\{\begin{array}{l}
\text{There exists } L_1\geq 1 \text{ such that if $|p|\geq L_1,$ then}\\
\displaystyle  \quad
L_1\left[({H}_\theta)_{p}p-\!{H}_\theta\!
-\!|2\sigma_\theta (\sigma^T_\theta)_x| | p|
-|{H}_\theta(\cdot,0)|_\infty\right]\\
\hspace*{1.5cm}
-|({H}_\theta)_{x}| -N|(\sigma_\theta)_x|_\infty^2|p|  \} \geq 0,
\text{ for a.e. } (x,p)\in \T^N \times \R^N,  \theta\in\Theta.
\end{array}\right.
\end{eqnarray}

\begin{thm}\label{bornes-sur-lin}
Assume \eqref{H1_dege}, \eqref{Hunif-cont} and~\eqref{BSsuperlinear}.
Then~\eqref{sol-lip-hold} holds, we have a comparison principle
for~\eqref{DHJE} and~\eqref{DHJE-sta}. In particular, the ergodic problem~\eqref{sta}
has a solution $(c,v)\in \R\times W^{1,\infty}(\T^N).$ 
\end{thm}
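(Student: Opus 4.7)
The plan is to first establish the uniform Lipschitz estimate of \eqref{sol-lip-hold} via a Bernstein-type argument tailored to \eqref{BSsuperlinear}; given that estimate, the comparison principle and the existence of an ergodic pair follow by classical viscosity methods. Concretely I would regularize: mollify $\sigma_\theta$ and $H_\theta$ (preserving the bounds in \eqref{BSsuperlinear} up to an error $o_\eps(1)$), add $-\eps \Delta u$ to make the equation uniformly parabolic, and invoke standard quasilinear theory to obtain a smooth solution $u^\eps$. Differentiating once in $x_i$, multiplying by $u^\eps_{x_i}$, summing over $i$, and rewriting the second-order contributions via $A_\theta = \sigma_\theta \sigma_\theta^T$ produces a parabolic inequality for $w := |Du^\eps|^2$. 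At an interior maximum $(x_0, t_0)$ of $w$ where $|Du^\eps(x_0,t_0)| \ge L_1$, one has $w_t \ge 0$, $Dw = 0$, and $-\text{trace}(A_\theta D^2 w) \ge 0$; selecting a nearly-optimal $\theta^\star$ inside $\sup_\theta$ reduces the remaining inequality to exactly the algebraic statement negated by \eqref{BSsuperlinear}. Hence $|Du^\eps|_\infty \le L_1$ uniformly in $\eps$, and half-relaxed limits transfer the bound to the viscosity solution $u$. The same computation applied to \eqref{DHJE-sta} yields the corresponding bound for $v_\lambda$, uniformly in $\lambda$, once one notes that comparison with constants gives $\lambda|v_\lambda|_\infty \le |H_\theta(\cdot,0)|_\infty \le C$.

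The existence of viscosity solutions of \eqref{DHJE} and \eqref{DHJE-sta} then follows from Perron's method, using the trivial barriers $u_0 \mp Ct$ (resp.\ $\mp C/\lambda$) coming from $|H_\theta(\cdot,0)|_\infty \le C$; alternatively, one takes the uniform limit of the smooth approximations $u^\eps$ via Arzel\`a--Ascoli thanks to the uniform Lipschitz bound. For the comparison principle, the doubling-of-variables technique of Crandall--Ishii applies in standard fashion: one penalizes $u_1(x,t)-u_2(y,t)-|x-y|^2/\eta$ (with a $1/(T-t)$ term on finite horizons, removed at the end), extracts matrices from Crandall--Ishii's lemma, and exploits the Lipschitz regularity of one of the solutions to bound $|x-y|/\eta$, so that \eqref{Hunif-cont} controls the Hamiltonian difference and the Lipschitz regularity of $\sigma_\theta$ in \eqref{H1_dege} controls the second-order terms.

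The ergodic pair $(c,v)$ is then produced by the classical vanishing-discount limit. Fix $x_0 \in \T^N$ and set $w_\lambda := v_\lambda - v_\lambda(x_0)$; this family is uniformly bounded on $\T^N$ thanks to the uniform Lipschitz estimate, and $|\lambda v_\lambda(x_0)| \le C$. Arzel\`a--Ascoli yields, along a subsequence $\lambda_n \to 0^+$, convergence $w_{\lambda_n} \to v$ in $C(\T^N)$ and $-\lambda_n v_{\lambda_n}(x_0) \to c \in \R$. Passing to the limit in the viscosity sense in \eqref{DHJE-sta} rewritten as
\[
\lambda_n w_{\lambda_n} - \lambda_n v_{\lambda_n}(x_0) + \sup_{\theta\in\Theta}\{-\text{trace}(A_\theta D^2 w_{\lambda_n}) + H_\theta(x, Dw_{\lambda_n})\} = 0
\]
delivers $(c,v)$ solving \eqref{sta}.

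The hard part is Step~1: the algebraic form of \eqref{BSsuperlinear} is constructed precisely so that the Bernstein identity for $|Du|^2$ closes at a maximum point, but carrying it out rigorously requires handling (i) the $\theta$-supremum via nearly-optimal selection and uniformity of all constants in $\theta$, (ii) the mere Lipschitz regularity of $\sigma_\theta, H_\theta$ via mollification followed by stability under uniform limits, and (iii) the degeneracy of $A_\theta$ via the additional regularization $-\eps \Delta$. Once the Lipschitz bound of \eqref{sol-lip-hold} is in hand, the remainder is textbook.
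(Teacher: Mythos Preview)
Your proposal is correct and matches the approach the paper alludes to: the paper does not actually prove this theorem but simply writes ``The main ingredients in the proof of this result are gradient bounds for the solutions of~\eqref{DHJE} and~\eqref{DHJE-sta} uniform in $t$ and $\lambda$ respectively. We refer the reader to Barles-Souganidis~\cite{bs01} and~\cite{ln13a}.'' Your Bernstein-type argument for the gradient bound, followed by the standard comparison principle via doubling of variables and the vanishing-discount construction of the ergodic pair, is precisely the content of those references, so you have effectively reconstructed what the paper outsources.
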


The main ingredients in the proof of this result are 
gradient bounds for the solutions of~\eqref{DHJE} and~\eqref{DHJE-sta}
uniform in $t$ and $\lambda$ respectively.
We refer the reader to Barles-Souganidis~\cite{bs01}
and~\cite{ln13a}.

We give some examples of Hamiltonians satisfying both
the assumption of Theorem~\ref{bornes-sur-lin} and~\eqref{cvx_neuf}.

\begin{ex} \label{ex-strict-cvx}
(strictly convex Hamilton-Jacobi-Bellman equations)
We suppose that~\eqref{H1_dege} holds and
\begin{eqnarray*}
&& H_\theta(x,p)= a_\theta(x)|p|^{1+\alpha_\theta}+\langle b_\theta(x), p\rangle
+\ell_\theta(x),\\[2mm]
&& 1 < \underline{\alpha} \leq \alpha_\theta \leq \overline{\alpha},
\quad 0< \underline{a}\leq  a_\theta(x)\leq C,\\
&&  |a_\theta(x)|,|b_\theta(x)|,|\ell_\theta(x)|,|a_\theta(x)\!-\!a_\theta(y)|,
|b_\theta(x)\!-\!b_\theta(y)|,|\ell_\theta(x)\!-\!\ell_\theta(y)|\leq C, \ x,y\in\T^N.
\end{eqnarray*}
\end{ex}

\begin{ex} \label{ex-unif-cvx}
(uniformly convex Hamilton-Jacobi-Bellman equations)
We suppose that~\eqref{H1_dege}-\eqref{Hunif-cont} hold and
\begin{eqnarray*}
&& (H_\theta)_{pp}(x,p)\geq \underline{h}>0,
\qquad |(H_\theta)_x|\leq C(1+|p|^2).
\end{eqnarray*}
\end{ex}

We now give an example 
such that the assumptions of Theorem~\ref{bornes-sur-lin}
still holds but the Hamiltonian is not convex anymore
and satisfies~\eqref{H10}.

\begin{ex} \label{ex-non-cvx1} (nonconvex equations)
We adapt an example from~\cite{bs00}. We consider~\eqref{DHJE}
without control with
\begin{eqnarray}\label{ex1_cha_dege}
H(x,p)=\psi(x,p)F(x,\frac{p}{|p|})-f(x),
\end{eqnarray}
where $f \in W^{1,\infty}(\T^N)$ is nonnegative, 
$F\in W^{1,\infty}(\T^N \times \R^N)$ is
strictly positive and $\psi(x,p)=|p+h(x)|^2-|h(x)|^2,$
with $h\in W^{1,\infty}(\T^N;\R^N).$ Notice that $H\in W^{1,\infty}(\T^N\times\R^N)$
is not convex in general.
We suppose that
$A=\sigma\sigma^T$ satisfies~\eqref{H1_dege} and
\begin{eqnarray}\label{uni_dege}
\{x\in \T^N : f(x)=|h(x)|=|\sigma(x)|=0\} \neq \emptyset.
\end{eqnarray} 
Arguing as in the proof of Proposition~\ref{valc-nr}, we can show that
$c=0$ (we cannot applying Proposition~\ref{valc-nr} directly since~\eqref{hypHF}
does not hold).

We now prove that $H$ satisfies~\eqref{H10} with
$K=\emptyset.$ For every $\mu>1$,
we have 
\begin{eqnarray*}
H(x,\mu p)-\mu H(x,p)=(\mu^2-\mu)|p|^2F(x,\frac{p}{|p|})+(\mu-1)f(x) \ge 0,
\end{eqnarray*} 
so~\eqref{H10}(i) holds. If $p\not= 0,$ the above inequality is strict
and therefore~\eqref{H10}(ii)(b) holds.
\end{ex}

\subsection{Second-order equations satisfying~\eqref{inver_dege}
or~\eqref{sur-sol-stricte}}

\begin{ex}\label{inversible-no-control}
Without control, we choose $\sigma \in W^{1,\infty}(\T^N;\mathcal{M}_n)$
with for each $x\in\T^N,$ either $\sigma(x)=0$ or $\sigma(x)$ is invertible.
Then~\eqref{H1_dege} and \eqref{inver_dege} hold.
A particular case is $\sigma(x)=a(x)\bar{\sigma}$
where $a\in W^{1,\infty}(\T^N)$ and $\bar{\sigma}\in  \mathcal{M}_n$ is a
constant invertible matrix.
\end{ex}

\begin{ex}\label{inversible-avec-control}
With control, we can deal with some cases of fully nonlinear
equations. For instance, consider $\sigma_\theta(x)=a(x)\bar{\sigma}_\theta,$
where $a\in W^{1,\infty}(\T^N)$ as in Example~\ref{inversible-no-control}
and there exists $\nu>0$ such that, for all $\theta\in\Theta,$
$\bar{\sigma}_\theta\in  \mathcal{M}_n$ and $\bar{\sigma}_\theta\bar{\sigma}_\theta^T\geq \nu I.$
Then~\eqref{H1_dege} and \eqref{inver_dege} hold. 
\end{ex}

It is worth noticing that, in the two examples above, we may
have the two following particular cases: $a>0$ on $\T^N$ and the
equation is uniformly parabolic, or $a=0$ on  $\T^N$  and the
equation is a first-order HJ equation.

We now give some examples for which~\eqref{F-bord-tore}-\eqref{non-deg-sigma}
hold (and so~\eqref{sur-sol-stricte} holds thanks to Proposition~\ref{cor-degenere}).

\begin{ex}\label{cas-dege1}
A first control-independent case is $\sigma(x)=a(x)\bar{\sigma}$
where $a\in W^{1,\infty}(\T^N)$ is such that $\partial[0,1]^N\subset \{a=0\}$
and $\bar{\sigma}\in  \mathcal{M}_n$ is any  nonzero constant degenerate matrix. 
With control, we can take  $\sigma_\theta(x)=a(x)\bar{\sigma}_\theta$
where $a$ satisfies the same assumptions in the control-independent case
and $(\bar{\sigma}_\theta)_{\theta\in \Theta}$ is a finite set of 
 nonzero constant degenerate (in this case $\Theta=\{1,2,\cdots, k\}).$
In both case, \eqref{F-bord-tore}-\eqref{non-deg-sigma} holds.
\end{ex}

\begin{ex}\label{cas-dege2}
For simplicity, we consider a control-independent example.
Assume that $\sigma=(\sigma_{ij})_{1\leq i,j\leq N}
\in W^{1,\infty}(\T^N;\mathcal{M}_N)$
is such that
\begin{eqnarray*}
\partial[0,1]^N\subset\{\sigma_{11}=0\}\subset \Sigma=\{\sigma =0\}.
\end{eqnarray*}
Then, for all $x\in \Sigma^C,$ $\sigma(x)e_1\not= 0,$
where $e_1=(1,0,\cdots 0).$ Therefore~\eqref{non-deg-sigma} holds.
\end{ex}

\subsection{Application to convergence results}

In the following cases, there exist solutions
to~\eqref{DHJE} and~\eqref{sta} (i.e.,~\eqref{sol-lip-hold}
holds) and we have a convergence result:

\begin{itemize}

\item Applying Theorem~\ref{strict_co_dege} when the $H_\theta$'s are given 
by Examples~\ref{ex-strict-cvx}
or~\ref{ex-unif-cvx} and 
the diffusion matrices are given by Examples~\ref{inversible-no-control}
or~\ref{inversible-avec-control}.

\item Applying Theorem~\ref{mainresult} when the $H_\theta$'s
given by Example~\ref{ex-non-cvx1} and the diffusion matrices are given by 
Examples~\ref{inversible-no-control} or~\ref{inversible-avec-control}.

\item  Applying Theorem~\ref{cas-degenere} when
the $H_\theta$'s are given by 
Examples~\ref{ex-strict-cvx},~\ref{ex-unif-cvx}
or~\ref{ex-non-cvx1} and  the diffusion matrices are given by 
Examples~\ref{cas-dege1} or~\ref{cas-dege1}.

\end{itemize}

\begin{rem}\label{rem-ref}
When $\Sigma=\T^N$ or $\Sigma=\emptyset,$
these convergence results were obtained in~\cite{fathi98, nr99, bs00, ds06}
and~\cite{bs01} respectively.
In the particular case of control-independent $C^2$ uniformly convex Hamiltonian
(see Example~\ref{ex-unif-cvx}) with $\sigma(x)=a(x)I$ with $a\in C^2(\T^N)$
(see Example~\ref{inversible-no-control}), the result
is proven in~\cite{cgmt13} by using a nonlinear adjoint
method. Notice that, on the one side, we can deal with fully nonlinear equations
and, on the other side, we only require the Hamiltonians to be uniformly convex
on $\Sigma.$
\end{rem}

When the assumption~\eqref{BSsuperlinear} does not hold, we need to prove
a priori the existence of Lipschitz solutions
to~\eqref{DHJE} and~\eqref{sta} before applying a convergence result.
For instance, if~\eqref{sol-lip-hold} holds for~\eqref{eqstrict-cvxS}
in Example~\ref{ex-strict-cvxS} below, then we have the convergence
by applying Theorem~\ref{strict_co_dege}.
An other important case is given in Section~\ref{subsec-nr}.

\begin{ex} \label{ex-strict-cvxS}
Consider
\begin{eqnarray}\label{eqstrict-cvxS}
\frac{\partial u}{\partial t}- a(x)^2\Delta u + (1-a(x))|Du|^2 =f(x), \quad
(x,t)\in\T^N\times (0,+\infty),
\end{eqnarray}
where $a, f\in W^{1,\infty}(\T)$ and $a$ is defined by
\begin{eqnarray*}
a(x)=
&& \left\{
\begin{array}{ll}
1-|x| & \text{if $x\in [0,\frac{1}{4}],$}\\[2mm]
0  & \text{if $x\in [\frac{1}{4},\frac{3}{4}],$}\\[2mm]
|x|-1 & \text{if $x\in [\frac{3}{4},1].$}
\end{array}
\right.
\end{eqnarray*}
Then $H(x,p)=(1-a(x))|p|^2$ is striclty convex 
on $\Sigma=[\frac{1}{4},\frac{3}{4}]$
and~\eqref{cvx_neuf} holds. 
\end{ex}

We end this section by a counter-example.

\begin{ex}\label{contre-exple-vinh}
Consider
\begin{equation}\label{contre_exe1}
\left\{
  \begin{array}{ll}
\frac{\partial u}{\partial t}-{\rm trace}(A(x)D^2u)+ H(Du)=0,
   & (x,t)\in \T^2\times (0,+\infty),\\[5pt]
 u(x,0)={\rm sin}(x_1+x_2), & x=(x_1,x_2)\in\T^2,
  \end{array}
\right.
\end{equation}
where $A= a(x)^2 \scriptsize
\left(
\begin{array}{cc}
1 & -1 \\
-1 & 1
\end{array}
\right)
$
and $H(p)=\frac{1}{\sqrt{2}}|p+(1,1)|-1.$
The solution of~\eqref{contre_exe1} is
$u(x,t)={\rm sin}(x_1+x_2-t)$ and convergence fails as $t\to +\infty.$
In this example, $A(x)$ is degenerate and $H$ is convex but
does not satisfy neither~\eqref{cvx_neuf}
nor~\eqref{H10}.
\end{ex}

\subsection{The Namah-Roquejoffre case}\label{subsec-nr}
Consider
\begin{eqnarray}\label{HJ-nr}
&&\frac{\partial u}{\partial t}
+\mathop{\rm sup}_{\theta\in\Theta}
\{-{\rm trace}(A_\theta(x)D^2 u) + F_\theta(x,Du)-f_\theta(x)\}=0, \quad
(x,t)\in \T^N\times (0,+\infty),
\end{eqnarray}
where $A_\theta=\sigma_\theta\sigma_\theta^T$ satisfies~\eqref{H1_dege},
\begin{eqnarray}\label{hyp-nr123}
&&
\left\{\begin{array}{l}
\displaystyle f_\theta\in W^{1,\infty}(\T^N),
\quad \text{$F_\theta\in W_{\rm loc}^{1,\infty}(\T^N\times\R^N)$ is convex in $p,$}\\
\displaystyle F_\theta(x,p)\geq F_\theta(x,0)=0,
\\
\displaystyle 
K:=\{x\in\Sigma : f_\theta(x)=\mathop{\rm min}_{y\in\T^N,\theta\in\Theta}f_\theta(y)\}\not=\emptyset
\end{array}\right.
\end{eqnarray}
and
\begin{eqnarray}\label{hyp-nr456}
\text{for $x\in \Sigma\setminus K,$} \ \  \mathop{\rm inf}_{\theta\in\Theta} f_\theta(x) 
>\mathop{\rm inf}_{y\in\T^N, \theta\in\Theta} f_\theta(y).
\end{eqnarray}
We call such kind of Hamiltonians of Namah-Roquejoffre type,
see  \cite{nr99, bs00}.

When $F$ is strictly convex in $p,$ then the convergence
result for~\eqref{HJ-nr} can be obtained with the use of Theorem~\ref{strict_co_dege}.
Here, we want to deal with the typical Hamiltonian
which appears in~\cite{nr99}, that is,
$F_\theta(x,p)=a_\theta(x)|p|,$ which is not strictly convex 
and does not satisfy~\eqref{BSsuperlinear}.
It is why we assume here a priori that~\eqref{sol-lip-hold}
holds for~\eqref{HJ-nr}.

From Proposition~\ref{valc-nr}, we obtain 
$\displaystyle c= - \mathop{\rm min}_{x\in\Sigma, \theta\in\Theta} f_\theta (x).$
Therefore, for $x\in K,$ we have $H_\theta (x,0)=-f_\theta(x)=c$ and~\eqref{H10}(ii)(a)
holds.
By~\eqref{hyp-nr123}, we have, for all $x\in\T^N,$ $\mu >1,$
\begin{eqnarray*}
H_\theta(x,\mu p)-\mu H_\theta(x,p) 
&=&F_\theta(x,\mu p)-\mu F_\theta(x,p)-(1-\mu)f_\theta(x)\\
&\geq& -(1-\mu) \mathop{\rm min}_{\T^N,\Theta} f_\theta
=  -(1-\mu) \mathop{\rm min}_{\Sigma,\Theta} f_\theta.
\end{eqnarray*}
Therefore~\eqref{H10}(i) holds. By~\eqref{hyp-nr456},~\eqref{H10}(ii)(b) holds.

Therefore, assuming~\eqref{sol-lip-hold} for~\eqref{HJ-nr}
and~\eqref{hyp-nr123},~\eqref{hyp-nr456}, then
we obtain the convergence
from Theorem~\ref{mainresult} when $A_\theta$ satisfies~\eqref{inver_dege}
and from Theorem~\ref{cas-degenere} when $A_\theta$ satisfies
the conditions of Examples~\ref{cas-dege1} or~\ref{cas-dege2}.

\section{Proof of Theorem \ref{strict_co_dege}}
\label{strict_dege}

At first, we notice that we can assume without loss of generality
that $c=0$ in~\eqref{sta}. Indeed, by a change of function
$u(x,t)\to u(x,t)+ct,$ the new function satisfies~\eqref{DHJE}
where $H_\theta$ is replaced with $H_\theta-c$ and,
if $H_\theta$ satisfies the
strict convexity assumption~\eqref{cvx_neuf}, then $H_\theta-c$
still satisfies~\eqref{cvx_neuf}. So, we suppose that $c=0$
and the solution $u(x,t)$ of~\eqref{DHJE} is bounded.
We aim at proving that $u(x,t)$ converges
uniformly to some function $u_\infty(x),$ which is a
solution of~\eqref{sta} with $c=0$ by the stability result.
In the following, $v$ is a Lipschitz
continuous solution of~\eqref{sta} with $c=0.$ 

Following the ideas of~\cite{bim13, nguyen12},
for $\eta>0, \mu >1$ and $(x,t)\in \T^N\times (0,+\infty),$ we introduce
\begin{eqnarray}\label{p-sc}
&&P_{\eta\mu} [u](x,t)=\sup_{s \ge t}\{u(x,t)-v(x)-\mu (u(x,s)-v(x))-\mu\eta(s-t)\},
\end{eqnarray}
and
\begin{eqnarray}\label{m-sc}
&&\\\nonumber
&& M_{\eta\mu}[u](t)=\sup_{x \in \T^N,s \ge t}\{u(x,t)-v(x)-\mu (u(x,s)-v(x))-\mu\eta(s-t)\}
= \sup_{x \in \T^N} P_{\eta\mu} [u](x,t).
\end{eqnarray}

\begin{lem}\label{lem:ineq-var}
The function $P_{\eta\mu} [u](x,t)$ is a subsolution of the 
Hamilton-Jacobi inequality
\begin{eqnarray}\label{ineq-var}
&& {\rm min}
\left\{ U(x,t)\, , \,
 \frac{\partial U}{\partial t}
+\mathop{\rm inf}_{\theta\in\Theta}\big\{
-{\rm trace}(A_\theta(x)D^2U)\big\}
-C|DU|\right\}
\le 0
\quad \text{in } \T^N\times (0,+\infty),
\end{eqnarray}
where $C$ is a constant independent of $x,t$ (given in~\eqref{valC}). 
\end{lem}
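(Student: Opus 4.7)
The plan is to adapt the Crandall--Ishii doubling-of-variables technique, in the spirit of~\cite{bim13}. I view $P:=P_{\eta\mu}[u]$ as the supremum over $s\geq t$ of the functions $W_s(x,t)=u(x,t)+(\mu-1)v(x)-\mu u(x,s)-\mu\eta(s-t)$ and combine the viscosity inequalities for $u$ (at two different times $t$ and $s$) with the ergodic equation for $v$ (using the reduction $c=0$ performed at the start of this section).

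First I would argue that $P$ is continuous on $\T^N\times[0,+\infty)$: since $u$ is bounded and $\eta>0$, the supremum defining $P(x,t)$ is attained for every $(x,t)$, and the uniform Lipschitz continuity of $u,v$ in $x$ transfers to $P$. Let $\varphi\in C^2$ and let $(x_0,t_0)$ be a strict local maximum of $P-\varphi$. If $P(x_0,t_0)\leq 0$, the minimum in~\eqref{ineq-var} is nonpositive and we are done; so I may assume $P(x_0,t_0)>0$ and pick $s_0\geq t_0$ realizing the supremum. I would focus on the main case $s_0>t_0$ (the boundary case $s_0=t_0$ forces $v(x_0)>u(x_0,t_0)$ and is handled by a direct computation using only the subsolution property of $u$).

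Next I would triple the spatial variable into $x,y,z$ and introduce
\[
\Phi_\varepsilon(x,t,y,s,z)=u(x,t)-\mu u(y,s)+(\mu-1)v(z)-\mu\eta(s-t)-\varphi(x,t)-\frac{|x-y|^2+|x-z|^2}{\varepsilon^2},
\]
to be maximized on a small neighborhood of $(x_0,t_0,x_0,s_0,x_0)$ with $s\geq t$. Standard penalty estimates combined with the Lipschitz bounds on $u,v$ give maximizers $(x_\varepsilon,t_\varepsilon,y_\varepsilon,s_\varepsilon,z_\varepsilon)\to(x_0,t_0,x_0,s_0,x_0)$ with $|x_\varepsilon-y_\varepsilon|,|x_\varepsilon-z_\varepsilon|=O(\varepsilon^2)$, so that the penalty gradients $2(x_\varepsilon-y_\varepsilon)/\varepsilon^2$ and $2(x_\varepsilon-z_\varepsilon)/\varepsilon^2$ remain uniformly bounded. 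I would then apply three viscosity inequalities: (a) subsolution of $u$ at $(x_\varepsilon,t_\varepsilon)$ with the obvious $\varphi$-based test function; (b) supersolution of $u$ at $(y_\varepsilon,s_\varepsilon)$, available because $s_\varepsilon>t_\varepsilon$ for $\varepsilon$ small; and (c) subsolution of $v$ at $z_\varepsilon$, available since $v$ solves~\eqref{sta} with $c=0$.

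Finally, to derive the parabolic inequality, I would freeze the $\theta_\varepsilon$ achieving the supremum in (a) and invoke (c) at the same $\theta_\varepsilon$. The two diffusion contributions $-4\,{\rm trace}(A_{\theta_\varepsilon}(x_\varepsilon))/\varepsilon^2$ from (a) and $-2\,{\rm trace}(A_{\theta_\varepsilon}(z_\varepsilon))/((\mu-1)\varepsilon^2)$ from (c) are of matching order in $1/\varepsilon^2$; the Lipschitz regularity of $A_\theta=\sigma_\theta\sigma_\theta^T$ in $x$ ensures that replacing $x_\varepsilon$ by $z_\varepsilon$ in the trace costs only $C|x_\varepsilon-z_\varepsilon|/\varepsilon^2=O(1)$, so that the two contributions combine into a bounded residual that is absorbed into the constant $C$ of~\eqref{valC}. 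The Hamiltonian discrepancies between $H_{\theta_\varepsilon}$ evaluated at $(x_\varepsilon,D\varphi+q_1)$ and $(z_\varepsilon,q_3)$, where $q_1,q_3$ are the relevant penalty gradients, are controlled by~\eqref{Hunif-cont} and produce a term of the form $C|D\varphi(x_\varepsilon,t_\varepsilon)|$ plus bounded constants. Passing $\varepsilon\to 0$ then yields the desired inequality at $(x_0,t_0)$. The main obstacle I foresee is precisely this combination step: the $O(1/\varepsilon^2)$ contributions of the penalties appear in both (a) and (c) and must be balanced carefully; it is the specific multiplier structure---$\mu$ on $u(\cdot,s)$ balanced by $(\mu-1)$ on $v$ with $\mu>1$---together with the Lipschitz regularity of $\sigma_\theta$ and $H_\theta$, that makes only bounded quantities survive once the three inequalities are combined at a common $\theta_\varepsilon$.
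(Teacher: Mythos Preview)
Your outline---triple the spatial variable, write three viscosity inequalities, combine---is the correct skeleton, but the step where you handle the second-order penalty terms has a genuine gap. You write the three inequalities by using the quadratic penalty as a $C^2$ test function in each variable \emph{separately}; with this naive doubling the diffusion contributions are $-\tfrac{4}{\varepsilon^{2}}\mathrm{tr}\,A_{\theta}(x_\varepsilon)$ in (a), $+\tfrac{2}{\mu\varepsilon^{2}}\mathrm{tr}\,A_{\theta}(y_\varepsilon)$ in (b), and $-\tfrac{2}{(\mu-1)\varepsilon^{2}}\mathrm{tr}\,A_{\theta}(z_\varepsilon)$ in (c). In the relevant combination (a)$-\mu$(b)$+(\mu-1)$(c) these three pieces have the \emph{same} sign and sum to $-\tfrac{1}{\varepsilon^{2}}\bigl(4\,\mathrm{tr}\,A_{\theta}(x_\varepsilon)+2\,\mathrm{tr}\,A_{\theta}(y_\varepsilon)+2\,\mathrm{tr}\,A_{\theta}(z_\varepsilon)\bigr)$, which is $O(1/\varepsilon^{2})$ and does not become bounded as $\varepsilon\to 0$. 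Your claim that ``replacing $x_\varepsilon$ by $z_\varepsilon$ in the trace costs only $O(1)$'' misses the point: the obstruction is not a \emph{difference} of traces but their \emph{sum}. (You also combine only (a) and (c), dropping the contribution of (b), but as the computation above shows, including (b) does not repair the argument.) This is exactly the well-known failure of naive variable-doubling for second-order equations.

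What is missing is the Crandall--Ishii maximum principle (\cite{cil92}): it supplies matrices $X,Y,Z$ in the closed second-order semijets satisfying a \emph{joint} matrix inequality
\[
\begin{pmatrix}X&0&0\\0&Y&0\\0&0&Z\end{pmatrix}
\le C\alpha^{2}
\begin{pmatrix}2I&-I&-I\\-I&2I&-I\\-I&-I&2I\end{pmatrix}
+\text{(lower order)},
\]
and testing this against the block vector $(\sigma_\theta(\bar x),\sigma_\theta(\bar y),\sigma_\theta(\bar z))$ gives the standard estimate $\mathrm{tr}\bigl(A_\theta(\bar x)X+A_\theta(\bar y)Y+A_\theta(\bar z)Z\bigr)\le C\alpha^{2}\sum|\sigma_\theta(\bar x)-\sigma_\theta(\bar y)|^{2}+\cdots=o(1)$ via the Lipschitz regularity of $\sigma_\theta$. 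This is precisely how the paper proceeds, with the \emph{symmetric} penalty $\alpha^{2}(|x-y|^{2}+|x-z|^{2}+|y-z|^{2})$ chosen so that its Hessian has the block structure displayed above. Two further corrections: (i) the Hamiltonian residual $H_\theta(x_0,p+q)+(\mu-1)H_\theta(x_0,-q/(\mu-1))-\mu H_\theta(x_0,p/\mu)$ is discarded using the \emph{convexity} of $H_\theta$, an ingredient you do not invoke; (ii) the correct way to select a common $\theta$ is to take $\theta^{*}$ from the \emph{supersolution} inequality (b), which only asserts existence of one good $\theta$, and then use that (a) and (c) hold for \emph{all} $\theta$---the reverse of what you propose.
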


\begin{proof}[Proof of Lemma~\ref{lem:ineq-var}]
For simplicity, we set $U(x,t):= P_{\eta\mu} [u](x,t).$

Let any $\phi_0 \in C^2(\T^N\times(0,\infty))$ such that $(x_0,t_0),$ $t_0>0,$ 
is a strict maximum point of $U-\phi_0$ in $\T^N\times[t_0-\delta,t_0+\delta]$ for some 
small $\delta >0$. If $U(x_0,t_0)\leq 0$, then \eqref{ineq-var} is automatically satisfied. 
We therefore assume that $U(x_0,t_0)>  0$ 
to continue. 

For $x,y,z \in \T^N$ and $0\leq t \leq s,$ we consider 
\begin{eqnarray}\label{defPhi123}
\Phi(x,y,z,t,s)=u(x,t)-v(z)-\mu (u(y,s)-v(z))
- \phi(x,y,z,t,s),
\end{eqnarray}
with
\begin{eqnarray}\label{defPhi124}
&&\\\nonumber
\phi(x,y,z,t,s)=\mu \eta(s-t)+\alpha^2(|x-y|^2+|x-z|^2+|y-z|^2)+|s-s_0|^2+\phi_0(x,t),
\end{eqnarray}
where $s_0$ is the  point where the maximum is achieved in \eqref{p-sc}. 
The function $\Phi$ achieves its maximum over $(\T^N)^3 \times  
\{(t,s) : s\geq t,\; t \in [t_0-\delta,t_0+\delta]$\} at  
$(\bar{x},\bar{y},\bar{z},\bar{t},\bar{s})$ because $u,v$ are bounded continuous. 
We obtain some classical estimates when $\alpha \to \infty,$
\begin{eqnarray}\label{Es3}
&& \left\{
\begin{array}{ll}
\Phi(\bar{x},\bar{y},\bar{z},\bar{t},\bar{s}) \to U(x_0,t_0)-\phi_0(x_0,t_0),\\[2mm]
\alpha(\bar{x}-\bar{y}), \, \alpha(\bar{x}-\bar{z})\, \alpha(\bar{y}-\bar{z})\,\to 0,\\[2mm]
(\bar{x},\bar{t},\bar{s}) \to (x_0,t_0,s_0)
\text{ since $(x_0,t_0,s_0)$ is a strict maximum point}\\
\hspace*{8cm}\text{of $U(x,t)-\phi_0(x,t)-|s-s_0|^2,$}\\[2mm]
\bar{s}>\bar{t} \text{ since } U(x_0,t_0)>  0.
\end{array}
\right.
\end{eqnarray}

In the sequel,
all the derivatives of $\phi$ are calculated at
$(\bar{x},\bar{y},\bar{z},\bar{t},\bar{s})$
so we skip this dependence for simplicity.

The theory of second order viscosity \cite{cil92} yields, for every
$\alpha>1,$ the existence of symmetric matrices $X,Y,Z$ such that
\begin{eqnarray}
&& (\phi_t,D_x\phi,X) \in \overline{J}^{2,+}u(\overline{x},\bar{t}),
\quad
(\frac{-\phi_s}{\mu},\frac{-D_y\phi}{\mu},\frac{-Y}{\mu}) 
\in \overline{J}^{2,-}u(\overline{y},\bar{s}),\label{subdiff}\\
&&
(\frac{D_z\phi}{\mu -1}, \frac{Z}{\mu -1})\in \overline{J}^{2,+}v(\overline{x}),\nonumber\\
&& 
-(\alpha^2 +|A|)I\leq
\left(
\begin{array}{ccc}
X & 0 & 0\\
0 & Y & 0\\
0 & 0 & Z
\end{array}
\right)
\leq A+ \frac{1}{\alpha^2}A^2,
\quad A=D^2\phi(\bar{x},\bar{y},\bar{z},\bar{t},\bar{s}).
\label{ineq-mat123}
\end{eqnarray}

We have
\begin{eqnarray}\label{mat_second}
A=2 \alpha^2\mathcal{A}+\mathcal{B},
\quad
\mathcal{A}:=
\left(
\begin{array}{ccc}
2I & -I & -I\\
-I & 2I & -I\\
-I & -I & 2I
\end{array}
\right),
\
\mathcal{B}:=
\left(
\begin{array}{ccc}
B & 0 & 0\\
0 & 0 & 0\\
0 & 0 & 0
\end{array}
\right),
\end{eqnarray}
where $B=D^2\phi_0(\bar{x},\bar{t}).$
It follows from~\eqref{ineq-mat123} and~\eqref{mat_second} that
\begin{eqnarray}\label{ineq-precisee}
&& -C(\alpha^2 \!+\!|B|)I
\leq
{\scriptsize
\left(
\begin{array}{ccc}
X & 0 & 0\\
0 & Y & 0\\
0 & 0 & Z
\end{array}
\right)}
\leq 
C\alpha^2\mathcal{A}
\!+\!
\mathcal{B}
+C(\frac{|B|^2}{\alpha^2}I\!+\!\mathcal{A}\mathcal{B}+\mathcal{B}\mathcal{A}).
\end{eqnarray}

Since $\tilde{u}$ is solution of~\eqref{DHJE} and $v$ is solution of~\eqref{sta},  
the following viscosity inequalities hold,
\begin{eqnarray}\label{visco-ineq1}
&& \left\{
\begin{array}{ll}
-\mu\eta+\displaystyle\frac{\partial \phi_0}{\partial t}(\bar{x},\bar{t})\\
\displaystyle
\hspace*{1cm}+\mathop{\rm sup}_{\theta\in\Theta}
\left\{-{\rm trace}(A_\theta(\bar{x}) X)+H_\theta\big(\bar{x},p+q
+D\phi_0(\bar{x},\bar{t})\big)\right\}
\le c,\\[2mm]
\displaystyle
-\eta+ 2(\bar{s}-s_0)+
\mathop{\rm sup}_{\theta\in\Theta}
\{-{\rm trace}(A_\theta(\bar{y})\frac{-Y}{\mu})+H_\theta(\bar{y},\frac{p}{\mu})\} \ge c,\\[2mm]
\displaystyle
\mathop{\rm sup}_{\theta\in\Theta}
\{-{\rm trace}(A_\theta(\bar{z})\frac{Z}{\mu-1})+H_\theta(\bar{z},\frac{-q}{\mu-1})\} \le c,
\end{array}
\right.
\end{eqnarray}
where  
\begin{eqnarray}\label{defpq}
p=2\alpha^2(\bar{x}-\bar{y})+2\alpha^2(\bar{z}-\bar{y}) 
\quad \text{and} \quad q=2\alpha^2(\bar{x}-\bar{z})+2\alpha^2(\bar{y}-\bar{z}).
\end{eqnarray}

In the sequel,
$o(1)\to 0$ as $\alpha\to +\infty$ uniformly with respect to $\theta.$

Using~\eqref{Hunif-cont},~\eqref{Es3} and the boundedness
of $|p|, |q|$
since $u,v$ are Lipschitz continuous with respect to $x$
(see \eqref{sol-lip-hold}), it follows
\begin{eqnarray}\label{meuf1}
&& \left\{
\begin{array}{ll}
\displaystyle\frac{\partial \phi_0}{\partial t}(x_0,t_0)
-\mu\eta\\
\displaystyle
\hspace*{1cm}+\mathop{\rm sup}_{\theta\in\Theta}
\{-{\rm trace}(A_\theta(\bar{x}) X)+H_\theta(x_0,p+q)\}
-C|D\phi_0(x_0,t_0)|\le c+o(1),\\[2mm]
\displaystyle-\mu\eta+\mathop{\rm sup}_{\theta\in\Theta}
\{{\rm trace}(A_\theta(\bar{y})Y)+\mu H_\theta(x_0,\frac{p}{\mu})\} \ge \mu c+o(1),\\[2mm]
\displaystyle\mathop{\rm sup}_{\theta\in\Theta}
\{-{\rm trace}(A_\theta(\bar{z})Z)+(\mu-1)H_\theta(x_0,\frac{-q}{\mu-1})\} \le (\mu-1)c+o(1).
\end{array}
\right.
\end{eqnarray}
Notice that the above constant $C$ may be chosen as
\begin{eqnarray}\label{valC}
&& C=  \mathop{\rm sup}_{\theta\in \Theta}
|(H_\theta)_p|_{L^\infty(\T^N\times R)},
\quad \text{ with } R= \mathop{\rm sup}_{t\geq 0}
|Du(\cdot,t)|_{L^\infty(\T^N)}+|Dv|_{L^\infty(\T^N)}.
\end{eqnarray}
Summing the inequalities leads to
\begin{eqnarray}\label{meuf3}
&&\frac{\partial \phi_0}{\partial t}(x_0,t_0)+\mathop{\rm inf}_{\theta\in\Theta}
\big\{-{\rm trace}(A_\theta(\bar{x}) X
+A_\theta(\bar{y})Y+A_\theta(\bar{z}) Z)\\
&&+H_\theta(x_0,p+q)
+(\mu-1)H_\theta(x_0,\frac{-q}{\mu-1})- \mu H_\theta(x_0,\frac{p}{\mu})\big\}
-C|D\phi_0(x_0,t_0)|
\le o(1).\nonumber
\end{eqnarray}

From~\eqref{mat_second} and \eqref{ineq-precisee}, 
using classical computations~\cite[p.74]{il90}, we obtain
\begin{eqnarray}\label{calcul-class123}
&&\\\nonumber
&& {\rm trace}(A_\theta(\bar{x}) X
+A_\theta(\bar{y})Y+A_\theta(\bar{z}) Z)\\\nonumber
&\le&
C\alpha^2 \, {\rm trace}\big(
(\sigma_\theta(\bar{x})-\sigma_\theta(\bar{y}))^T(\sigma_\theta(\bar{x})-\sigma_\theta(\bar{y}))
+(\sigma_\theta(\bar{x})-\sigma_\theta(\bar{z}))^T(\sigma_\theta(\bar{x})-\sigma_\theta(\bar{z}))\\
\nonumber
&& 
\hspace*{9cm}
+(\sigma_\theta(\bar{z})-\sigma_\theta(\bar{y}))^T(\sigma_\theta(\bar{z})-\sigma_\theta(\bar{y}))
\big)\\\nonumber
&& + {\rm trace}\left(A_\theta(\bar{x})B\right)
+C\frac{|B|^2}{\alpha^2}
+
{\rm trace}\left((2A_\theta(\bar{x})
\!-\!A_\theta(\bar{y})
\!-\!A_\theta(\bar{z}))B\right).
\end{eqnarray}
Since $B= D^2\phi_0(x_0,t_0),$ $|B|\leq C,$
$\sigma_\theta$ is Lispchitz continuous by~\eqref{H1_dege} and
$\bar{x},\bar{y},\bar{z}\to x_0,$
we obtain
\begin{eqnarray}\label{neuf2}
-{\rm trace}(A_\theta(\bar{x}) X
+A_\theta(\bar{y})Y+A_\theta(\bar{z}) Z) \ge  
-{\rm tr}(A_\theta(x_0)D^2\phi_0(x_0,t_0))
+o(1).
\end{eqnarray}

Since $H_\theta$ is convex and $p/\mu = (p+q)/\mu -q/\mu,$ we have
\begin{eqnarray}\label{Hcvx123}
H_\theta(x_0,p+q)
+(\mu-1)H_\theta(x_0,\frac{-q}{\mu-1})- \mu H_\theta(x_0,\frac{p}{\mu})\geq 0.
\end{eqnarray}

Using these previous estimates for \eqref{meuf3} and
letting $\alpha\to +\infty$, we obtain
\begin{eqnarray*}
&&
\frac{\partial \phi_0}{\partial t}(x_0,t_0)
+\mathop{\rm inf}_{\theta\in\Theta}\big\{
-{\rm tr}(A_\theta(x_0)D^2\phi_0(x_0,t_0))\big\}
-C|D\phi_0(x_0,t_0)|\le 0,
\end{eqnarray*}
which is exactly what we need. 
\end{proof}

We set $M_{\eta \mu}^+[u](t)={\rm max}\{0, M_{\eta \mu}[u](t)\}.$

\begin{lem}\label{Mdecroissant}
The function $M_{\eta \mu}^+[u](t)$ is nonincreasing, so it converges
to some constant ${m}_{\eta\mu}\geq 0$ as $t\to +\infty.$
\end{lem}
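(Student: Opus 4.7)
The plan is to exploit the subsolution property of $P_{\eta\mu}[u]$ established in Lemma~\ref{lem:ineq-var} and to compare it, on $\T^N\times[t_0,+\infty)$, with the constant $V\equiv M^+_{\eta\mu}[u](t_0)$. Since $M^+_{\eta\mu}[u](t)\geq 0$ by construction, it will suffice to prove that $t\mapsto M^+_{\eta\mu}[u](t)$ is nonincreasing on $[0,+\infty)$; the announced convergence will then follow from the monotone convergence of a bounded nonincreasing function.

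Fix $0\leq t_0<t_1$ and set $V(x,t)\equiv M^+_{\eta\mu}[u](t_0)$. First I would verify that $V$ is a classical (hence viscosity) supersolution of \eqref{ineq-var} on $\T^N\times[t_0,+\infty)$: being a nonnegative constant, $V\geq 0$, while $\partial_t V=DV=D^2V=0$, so both entries of the min are nonnegative and the reverse inequality holds. Second, from the very definitions of $P_{\eta\mu}[u]$ and $M_{\eta\mu}[u]$,
\[
P_{\eta\mu}[u](x,t_0)\;\leq\; M_{\eta\mu}[u](t_0)\;\leq\; M^+_{\eta\mu}[u](t_0)\;=\;V(x,t_0) \qquad \text{for all } x\in\T^N.
\]
A comparison between the subsolution $P_{\eta\mu}[u]$ furnished by Lemma~\ref{lem:ineq-var} and the supersolution $V$ on $\T^N\times[t_0,+\infty)$ will yield $P_{\eta\mu}[u](\cdot,t)\leq M^+_{\eta\mu}[u](t_0)$ for every $t\geq t_0$; taking the supremum in $x$ at $t=t_1$ then produces $M^+_{\eta\mu}[u](t_1)\leq M^+_{\eta\mu}[u](t_0)$, which is the required monotonicity.

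The main technical point is this comparison step, since \eqref{ineq-var} is of min (obstacle) type. The underlying PDE
\[
\frac{\partial U}{\partial t}+\mathop{\rm inf}_{\theta\in\Theta}\{-{\rm trace}(A_\theta(x)D^2U)\}-C|DU|=0
\]
is a Bellman-type degenerate parabolic equation on the compact torus whose first-order term is globally Lipschitz and whose diffusions are uniformly bounded thanks to \eqref{H1_dege}, so the standard comparison between bounded semicontinuous sub- and supersolutions applies. To reduce the obstacle inequality to the PDE alone, I would argue that at any would-be point $(\bar x,\bar t)$ where $P_{\eta\mu}[u]>V$ one has $P_{\eta\mu}[u](\bar x,\bar t)>0$, hence the subsolution test of \eqref{ineq-var} forces the second alternative of the min, against which $V$ is a classical supersolution; the usual weak maximum principle on $\T^N$ then rules out such a crossing. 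Equivalently, one can observe that $(P_{\eta\mu}[u])^+$ is a viscosity subsolution of the PDE alone and apply comparison directly to it. Once $M^+_{\eta\mu}[u]$ is known to be nonincreasing and bounded below by $0$, convergence to some limit $m_{\eta\mu}\geq 0$ as $t\to+\infty$ is automatic.
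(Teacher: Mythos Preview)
Your argument is correct but follows a different path from the paper. The paper first takes the supremum over $x$ to reduce \eqref{ineq-var} to the one–dimensional variational inequality
\[
\min\{M_{\eta\mu}[u](t),\,M_{\eta\mu}[u]'(t)\}\le 0 \quad\text{on }(0,+\infty),
\]
and then argues by elementary means on the open set $J=\{t:M_{\eta\mu}[u](t)>0\}$: on $J$ the derivative is $\le 0$ in the viscosity sense, so $M_{\eta\mu}[u]$ is nonincreasing there, which forces $\inf J=0$ and the desired monotonicity of $M_{\eta\mu}^+[u]$. Your route instead keeps the full $(x,t)$ problem and compares $P_{\eta\mu}[u]$ (or rather $(P_{\eta\mu}[u])^+$, which you correctly observe is a subsolution of the pure PDE) with the constant supersolution $V\equiv M_{\eta\mu}^+[u](t_0)$, invoking the comparison principle for the degenerate parabolic Bellman equation on $\T^N$. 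That comparison is indeed available under \eqref{H1_dege} (Lipschitz $\sigma_\theta$, compact state space, Lipschitz first-order part), so the argument closes. The paper's approach is more self-contained, avoiding any appeal to second-order comparison theory; yours is more systematic and would transfer verbatim to other settings where a comparison principle for the linearized equation is known. Your handling of the obstacle structure—either by noting that $P_{\eta\mu}[u]>V\ge 0$ forces the PDE branch, or by passing to $(P_{\eta\mu}[u])^+$—is fine, though in a written version you should spell out at least one of these reductions explicitly.
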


\begin{proof}[Proof of Lemma \ref{Mdecroissant}]
At first, it is easy to check that $M_{\eta \mu}[u]$ is continuous
and, from Lemma~\ref{lem:ineq-var}, by classical computations, $M_{\eta \mu}[u]$
is a viscosity subsolution of 
\begin{eqnarray}\label{ineq-var375}
\min \{ M_{\eta\mu} [u](t), M_{\eta\mu} [u]'(t)\}\leq 0 
\quad \text{on $(0,+\infty).$}
\end{eqnarray}
Let $J=\{t\in [0,+\infty) : M_{\eta \mu}[u](t)>0 \}.$
If $J=\emptyset,$ then $M_{\eta \mu}^+[u](t)=0$ for all $t$
and the conclusion follows. If $J\not=\emptyset,$ then,
by continuity, there exists $t_0<t_1$ such that
$[t_0,t_1]\subset J.$ By~\eqref{ineq-var375}
$M_{\eta\mu} [u]'(t)\leq 0$ in the viscosity
sense on $(t_0, t_1).$ Therefore, $t\mapsto M_{\eta\mu} [u](t)$
is nonincreasing on $[t_0, t_1].$ Necessarily, ${\rm inf}\, J=0$
and $t\mapsto M_{\eta\mu} [u](t)$
is nonincreasing on $[0,{\rm sup}\, J).$
If ${\rm sup}\, J=+\infty,$ then $t\mapsto M_{\eta\mu} [u](t)>0$
is nonincreasing on $[0,+\infty)$ and the conclusion follows.
If ${\rm sup}\, J<+\infty,$ then $M_{\eta\mu}^+ [u](t)=0$
on $[{\rm sup}\, J, +\infty)$ and therefore the limit is 0.
\end{proof}

The strategy of the proof of Theorem~\ref{strict_co_dege} is to obtain
${m}_{\eta 1}= 0.$ An immediate consequence is that $t\mapsto u(x,t)$ 
is nondecreasing for every $x.$ 
The conclusion follows easily, see the end of this section.

So, from now on, 
we argue by contradiction assuming that
\begin{eqnarray}\label{contr-ceta}
{m}_{\eta 1} >0.
\end{eqnarray}

The following result makes the link between
${m}_{\eta\mu}$ and ${m}_{\eta 1}.$

\begin{lem}\label{IM4_dege}
For all $\epsilon >0,$
there exists $\mu_{\epsilon} > 1$ such that, for $1\leq \mu \leq \mu_{\epsilon},$ we have
\begin{eqnarray}\label{ineg528}
P_{\eta \mu}[u](t)\geq P_{\eta 1}[u](t)- \epsilon,
\qquad t\geq 0.
\end{eqnarray}
In particular, there exists $\mu_{\eta} >1$ such that 
for $1<\mu <\mu_{\eta},$ we have
\begin{eqnarray}\label{ineg529}
M_{\eta \mu}[u](t)\geq \frac{{m}_{\eta 1}}{2}>0,
\qquad t\geq 0.
\end{eqnarray}
\end{lem}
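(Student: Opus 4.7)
The plan is to reduce the lemma to a direct algebraic comparison of the two suprema once the maximizing time in $P_{\eta 1}$ has been controlled. First I would unwind the definition with $\mu=1$: it collapses to $P_{\eta 1}[u](x,t)=\sup_{s\geq t}\{u(x,t)-u(x,s)-\eta(s-t)\}$, which is always nonnegative (take $s=t$). Since $u$ is bounded (after the reduction $c=0$ we have $|u|\leq C$) and continuous, and $s\mapsto u(x,t)-u(x,s)-\eta(s-t)$ tends to $-\infty$ as $s\to+\infty$, the supremum is attained at some $s_\star=s_\star(x,t)\in[t,+\infty)$. The nonnegativity of $P_{\eta 1}[u](x,t)$ forces $\eta(s_\star-t)\leq u(x,t)-u(x,s_\star)\leq 2|u|_\infty$, which yields the a priori bound $s_\star-t\leq T_\eta:=2|u|_\infty/\eta$ that is uniform in $(x,t)$.

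Next I would use this $s_\star$ as an admissible competitor in the definition of $P_{\eta\mu}[u](x,t)$ and expand:
\begin{eqnarray*}
P_{\eta\mu}[u](x,t)
&\geq& u(x,t)-v(x)-\mu(u(x,s_\star)-v(x))-\mu\eta(s_\star-t)\\
&=& P_{\eta 1}[u](x,t)+(\mu-1)\bigl(v(x)-u(x,s_\star)-\eta(s_\star-t)\bigr).
\end{eqnarray*}
The bracket is uniformly bounded in absolute value by the constant $K_\eta:=|v|_\infty+|u|_\infty+\eta T_\eta$, which depends only on $\eta$ and the $L^\infty$ bounds of $u$ and $v$. Choosing $\mu_\epsilon:=1+\epsilon/K_\eta$ ensures $(\mu-1)K_\eta\leq\epsilon$ for all $1\leq\mu\leq\mu_\epsilon$, and hence the pointwise-in-$x$ inequality \eqref{ineg528}.

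For the second assertion I would take the supremum over $x\in\T^N$ on both sides of \eqref{ineg528}, obtaining $M_{\eta\mu}[u](t)\geq M_{\eta 1}[u](t)-\epsilon$ for every $t\geq 0$. By Lemma~\ref{Mdecroissant}, $M_{\eta 1}^+[u]$ is nonincreasing with limit ${m}_{\eta 1}$; under the contradiction assumption~\eqref{contr-ceta} we have ${m}_{\eta 1}>0$, so $M_{\eta 1}[u](t)=M_{\eta 1}^+[u](t)\geq {m}_{\eta 1}$ for all $t\geq 0$. Applying the first part with $\epsilon={m}_{\eta 1}/2$ and setting $\mu_\eta:=\mu_{{m}_{\eta 1}/2}$ then yields \eqref{ineg529}.

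There is no real obstacle here; the only point requiring a touch of care is securing the uniform bound $s_\star-t\leq T_\eta$, which is what permits the perturbation in $\mu$ to be controlled by a single constant $K_\eta$. Once that is in hand, the rest is a direct computation from the definitions plus Lemma~\ref{Mdecroissant}.
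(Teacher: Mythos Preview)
Your proof is correct and follows essentially the same approach as the paper's: bound the maximizing time $s_\star$ in $P_{\eta 1}$ uniformly via $\eta(s_\star-t)\leq 2|u|_\infty$, use $s_\star$ as a competitor in $P_{\eta\mu}$, and control the resulting $(\mu-1)$ perturbation by a constant depending only on $\eta$ and the $L^\infty$ bounds of $u,v$. Your write-up is in fact tidier than the paper's own proof, which carries a harmless typo in the first display for $P_{\eta 1}[u](x,t)$.
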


\begin{proof}[Proof of Lemma \ref{IM4_dege}]
Let $x\in\T^N,$ $t\geq 0.$ There exists $s_1\geq t$ such that
\begin{eqnarray*}
P_{\eta 1}[u](x,t) = u(x,t)-\mu u(x,s_1)-\mu \eta (s_1-t)
\geq u(x,t)-\mu u(x,s_1)\geq -C
\end{eqnarray*}
since $u$ is bounded. We deduce $\eta (s_1-t)\leq C.$
Therefore
\begin{eqnarray*}
&&P_{\eta \mu}[u](x,t)-P_{\eta 1}[u](x,t) \geq 
(\mu -1)(v(x)-u(x,s_1)-\eta (s_1-t))\geq -C(\mu -1).
\end{eqnarray*}
To prove~\eqref{ineg529}, it is enough to notice that, since
${m}_{\eta 1} >0$ by~\eqref{contr-ceta}, then $M_{\eta 1}[u](t)$
is positive nonincreasing and bigger to ${m}_{\eta 1}.$ It is
then sufficient to choose $\epsilon= {m}_{\eta 1}/2.$ 
\end{proof}

From now on, we choose $1<\mu <\mu_\eta,$ where $\mu_\eta$ is given
by Lemma~\ref{IM4_dege}, in order that $M_{\eta \mu}[u](t)>0.$

\begin{lem}\label{Pvindept}
There exists $t_n\to +\infty$
such that $u(\cdot, \cdot +t_n)$ converges 
in $W^{1,\infty}(\T^N\times [0,+\infty))$
to a solution $\tilde{u}$ of~\eqref{DHJE}.
The function $P_{\eta\mu} [\tilde{u}](x,t)$ is a still
a subsolution of~\eqref{ineq-var}
and $M_{\eta\mu} [\tilde{u}](t)= {m}_{\eta \mu}>0$ is independent of $t.$
\end{lem}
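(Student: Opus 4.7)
The plan is to prove the three assertions of Lemma~\ref{Pvindept} in sequence, each resting on standard viscosity tools together with the compactness already built into the hypotheses.

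\emph{Step 1: extracting $\tilde u$.} First I would observe that by \eqref{sol-lip-hold} the family $\{u(\cdot,t)\}_{t\geq 0}$ is equi-Lipschitz in $x$ with a constant independent of $t$. To upgrade this to convergence in $W^{1,\infty}(\T^N\times[0,+\infty))$ I also need Lipschitz continuity in $t$, which follows from the standard trick of comparing $u(\cdot,t+h)$ with $u(\cdot,t)\pm \|u(\cdot,h)-u_0\|_\infty$ via the comparison principle (itself a consequence of \eqref{sol-lip-hold}), combined with the bound $|H_\theta(\cdot,0)|\leq C$ from \eqref{Hunif-cont} to control $\|u(\cdot,h)-u_0\|_\infty$ by $Ch$. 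The shifts $u_n(x,t):=u(x,t+t_n)$ are then equicontinuous and uniformly bounded on $\T^N\times[0,+\infty)$. Arzel\`a--Ascoli together with a diagonal extraction over $\T^N\times[0,T_k]$, $T_k\to+\infty$, produces a subsequence (still denoted $u_n$) converging locally uniformly to a Lipschitz function $\tilde u$. Stability of viscosity solutions under locally uniform convergence (the equation being time-autonomous) then gives that $\tilde u$ solves \eqref{DHJE}.

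\emph{Step 2: subsolution property of $P_{\eta\mu}[\tilde u]$.} I would simply rerun the proof of Lemma~\ref{lem:ineq-var} with $\tilde u$ in place of $u$. That proof used only that $u$ is a bounded Lipschitz viscosity solution of \eqref{DHJE}, that $v$ is a Lipschitz solution of \eqref{sta} (with $c=0$), and the convexity of the $H_\theta$'s in $p$; all of these hold for $\tilde u$ as well, so the argument transfers verbatim.

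\emph{Step 3: constancy and positivity of $M_{\eta\mu}[\tilde u]$.} The key observation is the shift identity
\begin{eqnarray*}
M_{\eta\mu}[u_n](t) \;=\; M_{\eta\mu}[u](t+t_n),
\end{eqnarray*}
obtained by substituting $s'=s+t_n$ in \eqref{m-sc}. Under the contradiction hypothesis \eqref{contr-ceta}, Lemma~\ref{IM4_dege} and Lemma~\ref{Mdecroissant} give that the right-hand side is positive, nonincreasing in $n$, and converges to $m_{\eta\mu}\geq {m}_{\eta 1}/2>0$. To pass to the limit on the left-hand side, I would exploit that $u_n-v$ is uniformly bounded, so the penalization $-\mu\eta(s-t)$ restricts the effective range of $s$ in the sup defining $M_{\eta\mu}[u_n](t)$ to a bounded window $s\in[t,t+R/\eta]$ with $R$ independent of $n$ and $t$. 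On the compact set $\T^N\times[t,t+R/\eta]$ the convergence $u_n\to\tilde u$ is uniform, hence $M_{\eta\mu}[u_n](t)\to M_{\eta\mu}[\tilde u](t)$ for each fixed $t$, giving $M_{\eta\mu}[\tilde u](t)=m_{\eta\mu}$ for every $t\geq 0$.

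The main obstacle is the interchange of supremum and limit in Step 3; the localization of the penalized supremum to a bounded $s$-interval, using only the uniform bound on $u$ and the coercivity of the penalization, is what makes the identification work. The time-Lipschitz bound needed in Step 1 is a standard regularity argument, and Step 2 is a direct reuse of the previous lemma.
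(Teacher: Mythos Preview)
Your proof is correct and reaches all three conclusions, but the route in Step~1 differs from the paper's. The paper does not use Arzel\`a--Ascoli plus diagonal extraction; instead it picks any sequence $t_n\to+\infty$ along which $u(\cdot,t_n)$ converges in $C(\T^N)$ (possible by the $x$-Lipschitz bound alone) and then invokes the comparison principle to get
\[
|u(x,t+t_n)-u(x,t+t_p)|\leq |u(\cdot,t_n)-u(\cdot,t_p)|_\infty,
\]
which makes $(u(\cdot,\cdot+t_n))_n$ a Cauchy sequence in $L^\infty(\T^N\times[0,+\infty))$. This yields \emph{uniform} convergence on the whole half-space at once, with no need for a time-Lipschitz estimate or a diagonal argument, and is exactly the convergence mode claimed in the lemma. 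Your locally-uniform convergence then forces the extra localisation of the penalised supremum to a bounded $s$-window in Step~3; the paper, having global uniform convergence of $u(\cdot,\cdot+t_n)$, passes to the limit in $M_{\eta\mu}[u](t+t_n)$ directly. In Step~2 the paper also takes the shorter path: since $P_{\eta\mu}[u](\cdot,\cdot+t_n)\to P_{\eta\mu}[\tilde u]$ uniformly, the subsolution property of $P_{\eta\mu}[\tilde u]$ follows from stability rather than by rerunning Lemma~\ref{lem:ineq-var}. Both approaches are valid; the paper's comparison-based Cauchy argument is more economical and delivers the precise statement of the lemma without the compensating work you do in Step~3.
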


\begin{proof}[Proof of Lemma \ref{Pvindept}]
By~\eqref{sol-lip-hold}, $\{u(\cdot,t), t\geq 0\}$ is relatively
compact in $W^{1,\infty}(\T^N).$ Let any sequence $t_n\to +\infty$
such that $u(\cdot, t_n)$ converges. By the comparison principle
for~\eqref{DHJE}, we have, for any $n,p\geq 0,$
\begin{eqnarray*}
|u(x,t+t_n)-u(x,t+t_p)|\leq |u(\cdot ,t_n)-u(\cdot ,t_p)|_\infty,
\qquad x\in\T^N, t\geq 0.
\end{eqnarray*}
Therefore $(u(\cdot, \cdot +t_n))_{n}$ is a Cauchy sequence
in $W^{1,\infty}(\T^N\times [0,+\infty)).$ So it converges to some function 
$\tilde{u}\in W^{1,\infty}(\T^N\times [0,+\infty)),$ which is still a solution
of~\eqref{DHJE} by classical stability results.

We observe that
\begin{eqnarray*}
&& M_{\eta\mu }[u](t+t_n)=\sup_{x \in \T^N,s \ge t}\{u(x,t+t_n)-v(x)-\mu 
(u(x,s+t_n)-v(x))-\mu\eta(s-t)\}.
\end{eqnarray*}
Since $M_{\eta \mu}[u](t+t_n)\to {m}_{\eta \mu}$ as $n\to +\infty,$
$M_{\eta \mu}[\tilde{u}](t)= {m}_{\eta \mu}$ is independent of $t.$ 

Finally, since $P_{\eta \mu}[u](x,t+t_n)$ converges uniformly to
$P_{\eta \mu}[\tilde{u}](x,t)$ as $n\to +\infty,$ we obtain that
$P_{\eta \mu}[\tilde{u}]$ is still a subsolution of~\eqref{ineq-var}
\end{proof}

\begin{lem}\label{rameneF}
For any $\tau >0,$
\begin{eqnarray}\label{defxtau}
\mathop{\rm max}_{x \in \T^N,t\geq 0} P_{\eta\mu}[\tilde{u}](x,t)
= M_{\eta\mu} [\tilde{u}](\tau)= {m}_{\eta \mu}= P_{\eta\mu}[\tilde{u}](x_\tau,\tau)
\quad \text{for $x_\tau \in \Sigma$ if $\Sigma\not=\emptyset.$}
\end{eqnarray}
If $\Sigma=\emptyset,$ then ${m}_{\eta 1}=0.$
\end{lem}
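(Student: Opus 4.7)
The function $Q := P_{\eta\mu}[\tilde u]$ is Lipschitz on $\T^N \times [0, +\infty)$: $\tilde u$ and $v$ are Lipschitz and, for each $(x,t)$, the sup in \eqref{p-sc} is attained on the compact interval $s \in [t, t + C_0/\eta]$ since $\mu\eta(s-t)$ dominates as $s \to +\infty$. Hence for each $\tau \ge 0$ the sup defining $M_{\eta\mu}[\tilde u](\tau)$ is attained at some $x_\tau \in \T^N$; by Lemma~\ref{Pvindept} this sup equals the constant $m_{\eta\mu}$, and Lemma~\ref{IM4_dege} together with \eqref{contr-ceta} yields $m_{\eta\mu} \ge m_{\eta 1}/2 > 0$. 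This gives the three leftmost equalities in \eqref{defxtau}.

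To force $x_\tau \in \Sigma$ I would argue by contradiction: suppose the (closed) max set $E(\tau) = \{x : Q(x,\tau) = m_{\eta\mu}\}$ is at positive distance $\delta > 0$ from $\Sigma$ (this also covers $\Sigma = \emptyset$). On a space-time neighborhood of $E(\tau) \times \{\tau\}$ where $Q > m_{\eta\mu}/2 > 0$, the first argument of the $\min$ in \eqref{ineq-var} can be discarded, so $Q$ becomes a viscosity subsolution of the uniformly parabolic inequality
\[
Q_t + \mathop{\rm inf}_{\theta \in \Theta}\{-{\rm tr}(A_\theta(x) D^2 Q)\} - C|DQ| \le 0,
\]
thanks to \eqref{inver_dege} giving $A_\theta \ge \nu_\delta I$ on $\Sigma_\delta^C$. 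A standard strong maximum principle (quadratic-barrier arguments adapted to viscosity subsolutions of uniformly parabolic inequalities with a Lipschitz first-order drift) then propagates $Q \equiv m_{\eta\mu}$ along the connected component of $E(\tau)$ inside $\Sigma_\delta^C$, up to the closure.

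When $\Sigma \neq \emptyset$, this closure meets $\partial \Sigma_\delta$, producing a max at distance exactly $\delta$ from $\Sigma$; iterating with $\delta_n = \delta/2^n \to 0$ and using compactness of $\T^N$ with continuity of $Q(\cdot,\tau)$, a subsequential limit gives $x_\tau \in \Sigma$ with $Q(x_\tau, \tau) = m_{\eta\mu}$, contradicting the initial assumption. When $\Sigma = \emptyset$, the propagation fills $\T^N$ so $Q \equiv m_{\eta\mu}$ on $\T^N \times [0,+\infty)$. Using the identity
\[
P_{\eta\mu}[\tilde u](x,t) = (1-\mu)(\tilde u(x,t) - v(x)) + \mu P_{\eta 1}[\tilde u](x,t),
\]
obtained by pulling the $s$-independent factor $(1-\mu)(\tilde u - v)$ out of the sup in \eqref{p-sc}, I rewrite $Q \equiv m_{\eta\mu}$ as
\[
\tilde u(x,t) = m_{\eta\mu} - (\mu - 1)v(x) + \mu \mathop{\rm inf}_{s \ge t}\{\tilde u(x,s) + \eta(s-t)\}.
\]
Iterating at the optimizers and setting $a_k := \tilde u(x, s^{(k)})$, $b := (\mu-1)v(x) - m_{\eta\mu}$, one obtains $a_{k+1} \le (a_k + b)/\mu$, hence $a_k \le a_0/\mu^k + b(1-\mu^{-k})/(\mu-1)$, whose limit is $v(x) - m_{\eta\mu}/(\mu-1)$. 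Combined with $a_k \ge -|\tilde u|_\infty$, this forces $m_{\eta\mu} \le (\mu-1)(|v|_\infty + |\tilde u|_\infty)$; since Lemma~\ref{IM4_dege} provides $m_{\eta\mu} \ge m_{\eta 1}/2$, letting $\mu \to 1^+$ forces $m_{\eta 1} = 0$.

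The main obstacle will be justifying the strong maximum principle in the viscosity sense for the nonlinear min-type inequality \eqref{ineq-var} with the $-C|DQ|$ drift, and carrying out the iterated propagation all the way to $\Sigma$. The positivity $Q > m_{\eta\mu}/2$ on a neighborhood of the max set is what saves the argument: it lets us discard the $\min\{U,\cdot\}$ structure and reduce everything to the standard uniformly parabolic setting where such a strong maximum principle is well-documented.
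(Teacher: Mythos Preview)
Your proposal is correct and, for the case $\Sigma\neq\emptyset$, essentially coincides with the paper's argument: both discard the obstacle part of~\eqref{ineq-var} where $Q>0$, apply the strong maximum principle for viscosity subsolutions in the uniformly parabolic region $\Sigma_\delta^C$, and send $\delta\to 0$. The paper makes the SMP step precise by citing Da~Lio's result~\cite{dalio04}, which exactly covers the operator $\inf_\theta\{-{\rm tr}(A_\theta D^2\cdot)\}-C|D\cdot|$ under~\eqref{inver_dege}; your ``quadratic-barrier'' remark points to the same class of arguments. Your iteration $\delta_n=\delta/2^n$ is unnecessary (once a maximum point lies in $\Sigma_\delta^C$ it lies in $\Sigma_{\delta'}^C$ for every $\delta'<\delta$, so one $\delta\to 0$ limit suffices), but harmless.

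For $\Sigma=\emptyset$ you take a genuinely different route. The paper first passes to the limit $\mu\to 1$ (using the identity you wrote, $P_{\eta\mu}=(1-\mu)(\tilde u-v)+\mu P_{\eta 1}$, which shows $P_{\eta\mu}\to P_{\eta 1}$ uniformly) to obtain $P_{\eta 1}[\tilde u]\equiv m_{\eta 1}$, and then uses a one-line telescoping trick: if $s(t)$ is the optimizer at $t$, then
\[
2m_{\eta 1}=P_{\eta 1}[\tilde u](x,t)+P_{\eta 1}[\tilde u](x,s(t))
=\tilde u(x,t)-\tilde u(x,s(s(t)))-\eta(s(s(t))-t)\le m_{\eta 1},
\]
a contradiction. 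Your approach instead keeps $\mu>1$ fixed, runs the linear recursion $a_{k+1}\le (a_k+b)/\mu$ to get $m_{\eta\mu}\le(\mu-1)(|v|_\infty+|\tilde u|_\infty)$, and only then sends $\mu\to 1^+$ via Lemma~\ref{IM4_dege}. Both arguments are valid; the paper's two-step telescope is shorter and avoids the recursion, while your version has the minor advantage of never needing to justify that $P_{\eta 1}$ itself is identically constant.
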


The point in this result is that the maximum
of $P_{\eta\mu}[\tilde{u}](\tau)$ is achieved at some point 
$x_\tau\in\Sigma.$

\begin{proof}[Proof of Lemma \ref{rameneF}]
Let $\tau >0$ and suppose that $x_\tau$ defined by~\eqref{defxtau}
lies in $\T^N-\Sigma.$ We write $U(x,t)= P_{\eta\mu}[\tilde{u}](x,t)$
for simplicity.
Since  $M_{\eta\mu} [\tilde{u}](t)$ is independent
of $t,$ we have
\begin{eqnarray*}
 U(x_\tau,\tau)
=\max_{x \in \T^N, \, t \ge 0}U(x,t)={m}_{\eta\mu}.
\end{eqnarray*}
We aim at applying the strong maximum principle
of Da Lio~\cite{dalio04} for viscosity solutions. Let $\delta >0$
and $\Sigma_\delta=\{{\rm dist}(\cdot, \Sigma)\geq \delta\}.$
We consider the connected component  $\mathcal{C}_\delta$ of  $x_\tau$
in $\Sigma_\delta^C\cap \{U(\cdot ,\tau)>0\}.$
From Lemmas~\ref{ineq-var} and~\ref{Pvindept}, $U$ is a subsolution
of 
\begin{eqnarray*}
 \frac{\partial U}{\partial t}
+G(x,DU,D^2U)\le 0
\qquad \text{in ${\rm int}(\mathcal{C}_\delta)\times (\tau-\tau/2,\tau+\tau/2),$}
\end{eqnarray*}
where
\begin{eqnarray*}
G(x,p,X)=\mathop{\rm inf}_{\theta\in\Theta}\big\{
-{\rm tr}(A_\theta(x)X)\big\}
-C|p|.
\end{eqnarray*}
From~\eqref{inver_dege},
\begin{eqnarray*}
G(x,p,X+Y)-G(x,p,Y)\leq -\nu_\delta \, {\rm trace}(Y),
\quad X,Y\in\mathcal{S}_N, \, Y\geq 0, \, x\in  \mathcal{C}_\delta, \, p\in \R^N
\end{eqnarray*}
and $G(x,\lambda p,\lambda X)=\lambda G(x,p,X)$ for $\lambda >0.$
From~\cite[Th. 2.1]{dalio04}, we infer that 
$U$ is constant and equal to ${m}_{\eta\mu}$
in $\partial(\mathcal{C}_\delta\times \{t=\tau\}).$
Moreover, since ${m}_{\eta\mu}>0,$ 
necessarily $\partial\mathcal{C}_\delta\subset \Sigma_\delta.$
It follows that there exists $x_{\tau\delta}$
such that $U(x_{\tau\delta},\tau)={m}_{\eta\mu}$
and ${\rm dist}(x_{\tau\delta},\Sigma)=\delta.$
Letting $\delta\to 0$ and extracting subsequences if
necessary, we find $y_\tau\in  \partial \Sigma$
such that $U(y_\tau,\tau)=P_{\eta\mu}[\tilde{u}](y_\tau,\tau)={m}_{\eta\mu}.$

In the case $\Sigma =\emptyset,$ we obtain that
$P_{\eta\mu}[\tilde{u}](x,t)={m}_{\eta\mu}$ in $\T^N\times [0,+\infty).$
Letting $\mu\to 1,$ we get
$P_{\eta 1}[\tilde{u}](x,t)={m}_{\eta 1}>0$ in $\T^N\times [0,+\infty).$
Let $s(t)$ be the point where the maximum is achieved 
in $P_{\eta 1}[\tilde{u}](x,t).$ We have
\begin{eqnarray*}
P_{\eta 1}[\tilde{u}](x,t)+P_{\eta 1}[\tilde{u}](x,s(t))
=2{m}_{\eta 1} = u(x,t)-u(x,s(s(t)))-\eta (s(s(t))-t)\leq {m}_{\eta 1}
\end{eqnarray*}
which leads to a contradiction with~\eqref{contr-ceta}
and implies ${m}_{\eta 1}=0.$
\end{proof}

We now obtain the desired contradiction with~\eqref{contr-ceta}.
The following result is is one the key step in the proof
of Theorem~\ref{strict_co_dege}.

\begin{lem}\label{IMpo1}
If, for some $\tau >0,$
\begin{eqnarray*}
\mathop{\rm max}_{x \in \T^N,t\geq 0} P_{\eta\mu}[\tilde{u}](x,t)
= M_{\eta\mu} [\tilde{u}](\tau)= {m}_{\eta \mu}= P_{\eta\mu}[\tilde{u}](x_\tau,\tau)
\quad \text{for $x_\tau \in \Sigma,$}
\end{eqnarray*}
then ${m}_{\eta \mu}=0.$
\end{lem}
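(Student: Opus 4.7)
The plan is to argue by contradiction, assuming $m_{\eta\mu}>0$, and to carry out at $(x_\tau,\tau,s_0)$ the same three-variable doubling as in Lemma~\ref{lem:ineq-var}. Fix $s_0>\tau$ at which the supremum defining $P_{\eta\mu}[\tilde u](x_\tau,\tau)=m_{\eta\mu}$ is attained, and introduce the localization $\phi_0(x,t)=|x-x_\tau|^2+(t-\tau)^2$ in the $(x,t)$ variable together with $(s-s_0)^2$ in $s$, so that $(x_\tau,\tau,s_0)$ becomes a strict maximum of the relevant auxiliary functional. Then consider, as in~\eqref{defPhi123}--\eqref{defPhi124},
\begin{align*}
\Phi_\alpha(x,y,z,t,s)&=\tilde u(x,t)-v(z)-\mu(\tilde u(y,s)-v(z))\\
&\quad -\alpha^2(|x-y|^2+|x-z|^2+|y-z|^2)-\mu\eta(s-t)-\phi_0(x,t)-(s-s_0)^2,
\end{align*}
whose maximizer $(\bar x,\bar y,\bar z,\bar t,\bar s)$ converges to $(x_\tau,x_\tau,x_\tau,\tau,s_0)$ with $\alpha^2(|\bar x-\bar y|^2+|\bar x-\bar z|^2+|\bar y-\bar z|^2)\to 0$ as $\alpha\to\infty$.

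Next, Ishii's lemma yields matrices $X,Y,Z$ satisfying \eqref{ineq-mat123}--\eqref{ineq-precisee} and three viscosity inequalities of the same form as~\eqref{visco-ineq1}: a subsolution inequality (Sub~1) for $\tilde u$ at $(\bar x,\bar t)$ valid for every $\theta$, a supersolution inequality (Super~2) for $\tilde u$ at $(\bar y,\bar s)$ attained for some approximate maximizer $\theta_\epsilon$, and a subsolution inequality (Sub~3) for $v$ at $\bar z$ valid for every $\theta$ (with $c=0$). Combining Sub~1 and Sub~3 at $\theta=\theta_\epsilon$ with the negated Super~2, as in the derivation of~\eqref{meuf3}, and using that $x_\tau\in\Sigma$ forces $A_\theta(x_\tau)=\sigma_\theta\sigma_\theta^T(x_\tau)=0$ together with the Lipschitz regularity of $\sigma_\theta$, the estimates \eqref{calcul-class123}--\eqref{neuf2} show that the combined trace sum ${\rm trace}(A_{\theta_\epsilon}(\bar x)X+A_{\theta_\epsilon}(\bar y)Y+A_{\theta_\epsilon}(\bar z)Z)$ is $o(1)$ as $\alpha\to\infty$, uniformly in $\theta$. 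In the limit we obtain
\begin{equation*}
\inf_{\theta\in\Theta}\{H_\theta(x_\tau,p+q)+(\mu-1)H_\theta(x_\tau,-q/(\mu-1))-\mu H_\theta(x_\tau,p/\mu)\}\le\mu\epsilon+o(1),
\end{equation*}
with $p,q$ as in~\eqref{defpq}. The bracketed expression is nonnegative by convexity of the $H_\theta$'s, and by the uniform strict convexity~\eqref{cvx_neuf} (applied with $\lambda=1/\mu$) it is bounded below by a positive constant whenever $p+q\neq-q/(\mu-1)$. Passing to subsequential limits $(p_\alpha,q_\alpha)\to(p^*,q^*)$, which are bounded thanks to~\eqref{sol-lip-hold}, and letting $\epsilon\to 0$, I conclude $p^*+q^*=-q^*/(\mu-1)$, so that the three ``gradients'' $p^*+q^*$, $p^*/\mu$ and $-q^*/(\mu-1)$ all coincide at a common value $\xi$.

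The final step is to reach a contradiction with $\eta>0$. Passing to the limit in Sub~3 and Super~2 at $\theta=\theta_\epsilon$ gives, respectively, $H_{\theta_\epsilon}(x_\tau,-q/(\mu-1))\le 0$ (once the trace term ${\rm trace}(A_{\theta_\epsilon}(\bar z)Z)$ is controlled via the combined bound) and $H_{\theta_\epsilon}(x_\tau,p/\mu)\ge\eta-\epsilon$ (similarly for ${\rm trace}(A_{\theta_\epsilon}(\bar y)Y)$); since both arguments converge to the common $\xi$, continuity of $H_\theta$ in $p$ from~\eqref{Hunif-cont} yields $\eta\le\epsilon+o(1)$, and letting $\epsilon\to 0$ forces $\eta\le 0$, contradicting $\eta>0$. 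Hence $m_{\eta\mu}=0$. The hard part---precisely the ``control of second-order terms near $\Sigma$'' flagged in the introduction---is this last step: while the \emph{combined} trace sum vanishes in the limit, the \emph{individual} terms ${\rm trace}(A_\theta(\bar y)Y)$ and ${\rm trace}(A_\theta(\bar z)Z)$ are a priori only $O(\alpha^2|\bar z-x_\tau|^2)=O(1)$, because the localization yields only $|\bar x-x_\tau|\le C/\alpha$; closing the argument thus requires a careful accounting of these contributions against the combined bound, using \eqref{ineq-precisee} and the specific linear combination of sub- and super-jets that appears in Sub~3 plus Super~2.
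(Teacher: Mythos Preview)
Your overall strategy is correct and matches the paper's: doubling in three space variables, deriving the three viscosity inequalities, using strict convexity when the limiting gradients differ, and in the degenerate case extracting a contradiction from the individual sub/super inequalities. The gap is exactly where you yourself flag it, and your proposed resolution does not work.

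With the quadratic localization $\phi_0(x,t)=|x-x_\tau|^2+(t-\tau)^2$, the best estimate you can squeeze out of the maximum inequality $\Phi(\bar x,\bar y,\bar z,\bar t,\bar s)\geq m_{\eta\mu}$ together with $P_{\eta\mu}[\tilde u](\bar z,\bar t)\leq m_{\eta\mu}$ and Lipschitz continuity is
\[
|\bar x-x_\tau|^2 \leq C(|\bar x-\bar z|+|\bar z-\bar y|) \leq C\alpha^{-2},
\]
hence $\alpha^2|\bar x-x_\tau|^2=O(1)$, precisely as you note. The individual terms ${\rm trace}(A_\theta(\bar y)Y)$ and ${\rm trace}(A_\theta(\bar z)Z)$ are then only $O(1)$, and there is no way to recover them from the combined bound: the matrix inequality \eqref{ineq-precisee} controls only the \emph{sum} after testing with $(\sigma_\theta(\bar x),\sigma_\theta(\bar y),\sigma_\theta(\bar z))$, and no linear combination of Sub~3 and Super~2 alone produces a trace expression dominated by that sum. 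Your ``careful accounting'' sentence is not an argument.

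The paper's fix is to replace $|x-x_\tau|^2$ by $\langle x-x_\tau\rangle:=\sqrt{\epsilon^2+|x-x_\tau|^2}$ with a second small parameter $\epsilon>0$. Because $\langle 0\rangle=\epsilon$, the same maximum comparison now yields
\[
\sqrt{\epsilon^2+|\bar x-x_\tau|^2}\leq C(|\bar x-\bar y|+|\bar x-\bar z|)+\epsilon,
\]
and squaring and subtracting $\epsilon^2$ gives
\[
|\bar x-x_\tau|^2\leq C(|\bar x-\bar y|^2+|\bar x-\bar z|^2)+C\epsilon(|\bar x-\bar y|+|\bar x-\bar z|).
\]
Since $\alpha^2|\bar x-\bar y|$ and $\alpha^2|\bar x-\bar z|$ are bounded, this yields $\alpha^2|\bar x-x_\tau|^2=o_\alpha(1)+O(\epsilon)$, and the same holds with $\bar y,\bar z$ in place of $\bar x$. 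Hence each individual trace ${\rm trace}(A_\theta(\bar x)X)$, ${\rm trace}(A_\theta(\bar y)Y)$, ${\rm trace}(A_\theta(\bar z)Z)$ is $o_\alpha(1)+O(\epsilon)$ uniformly in $\theta$ (using $\sigma_\theta(x_\tau)=0$ and $|X|,|Y|,|Z|\leq C(\alpha^2+\epsilon^{-1}+(\alpha\epsilon)^{-2})$). One then sends $\alpha\to\infty$ first, obtaining three clean inequalities at the common limit gradient $p_\epsilon$, and finally $\epsilon\to 0$ to reach the contradiction $\mu\eta=0$. The extra parameter $\epsilon$ is the missing idea in your argument.
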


\begin{proof}[Proof of Lemma \ref{IMpo1}]
We fix $\tau >0$ and we assume  that
\begin{eqnarray*}
&& {m}_{\eta \mu}=M_{\eta \mu} [\tilde{u}](\tau)
=\tilde{u}(x_\tau,\tau)-v(x_\tau)-\mu (\tilde{u}(x_\tau,s_\tau)-v(x_\tau))
-\mu \eta(s_\tau-\tau),
\quad\text{with $x_\tau\in \Sigma,$}
\end{eqnarray*}
and we recall that, by contradiction, we assume ${m}_{\eta \mu}>0.$
Notice that
$\tau$ is a strict maximum point of $t \mapsto M_{\eta \mu}[\tilde{u}](t)-|t-\tau|^2$
in $(0,+\infty)$
since $M_{\eta \mu}[\tilde{u}](t)$ is constant.

We define $\Phi, \phi$ as in~\eqref{defPhi123}-\eqref{defPhi124}
by replacing $s_0$ with $s_\tau$ in $\phi$ and choosing
$\phi_0(x,t)= \langle x-x_\tau\rangle+|t-\tau|^2,$
where $\langle x\rangle= \sqrt{\epsilon^2 +|x|^2}$ for some fixed $\epsilon >0.$

Exactly as in the proof of Lemma~\ref{ineq-var},
the function $\Phi$ achieves its maximum over $(\T^N)^3 \times  
\{(t,s) : s\geq t,\; t \in [t_0-\delta,t_0+\delta]$\} at  
$(\bar{x},\bar{y},\bar{z},\bar{t},\bar{s})$ and~\eqref{Es3} are replaced
with
\begin{eqnarray}\label{Es33}
&& \left\{
\begin{array}{ll}
\Phi(\bar{x},\bar{y},\bar{z},\bar{t},\bar{s}) \to  {m}_{\eta \mu}-\epsilon,\\[2mm]
\alpha(\bar{x}-\bar{y}), \, \alpha(\bar{x}-\bar{z})\, , \,
\alpha(\bar{y}-\bar{z})\,\to 0,\\[2mm]
(\bar{x},\bar{y},\bar{z},\bar{t},\bar{s}) \to (x_\tau,x_\tau,x_\tau,\tau,s_\tau)\\[2mm]
\bar{s}>\bar{t} \text{ since } M_{\eta \mu}[\tilde{u}](\tau)={m}_{\eta \mu}>  0.
\end{array}
\right.
\end{eqnarray}
Formulas~\eqref{subdiff}--\eqref{ineq-precisee} still hold
with $B=D_{xx}^2 \langle \cdot -x_\tau\rangle (\bar{x})
= \langle \bar{x} -x_\tau\rangle^{-1}
(I-\frac{\bar{x}-x_\tau}{ \langle \bar{x} -x_\tau\rangle}
\otimes \frac{\bar{x}-x_\tau}{ \langle \bar{x} -x_\tau\rangle}).$
Noticing that $|B|\leq \epsilon^{-1},$ we may refine~\eqref{ineq-precisee} 
\begin{eqnarray}\label{nlle-ineq}
&& -C(\alpha^2+\frac{1}{\epsilon}) I
\leq
\left(
\begin{array}{ccc}
X & 0 & 0\\
0 & Y & 0\\
0 & 0 & Z
\end{array}
\right)
\leq 
C\alpha^2
\left(
\begin{array}{ccc}
2I & -I & -I\\
-I & 2I & -I\\
-I & -I & 2I
\end{array}
\right)
+C(\frac{1}{\alpha^2\epsilon^2}+\frac{1}{\epsilon}) I.
\end{eqnarray}

In the sequel, $o(1)$ denotes a function which tends to 0 as $\alpha\to +\infty$
for fixed $\epsilon>0,$ uniformly with respect to $\theta.$

The viscosity inequalities~\eqref{visco-ineq1} and \eqref{meuf1}
hold with
$\frac{\partial \phi_0}{\partial t}(\bar{x},\bar{t})=2(\bar{t}-\tau),$
$D\phi_0(\bar{x},\bar{t})=\frac{\bar{x}-x_\tau}{ \langle \bar{x} -x_\tau\rangle}=o(1),$
\begin{eqnarray}\label{neuf1234}
\displaystyle
&& \left\{
\begin{array}{ll}
\displaystyle-\mu\eta+\mathop{\rm sup}_{\theta\in\Theta}
\{
-{\rm trace}(A_\theta(\bar{x}) X)+H_\theta(x_\tau,p+q)\}\le o(1),\\[3mm]
\displaystyle
-\mu\eta+\mathop{\rm sup}_{\theta\in\Theta}
\{
{\rm trace}(A_\theta(\bar{y})Y)+\mu H_\theta(x_\tau,\frac{p}{\mu})\} \ge o(1),\\[3mm]
\displaystyle
\mathop{\rm sup}_{\theta\in\Theta}
\{-{\rm trace}(A_\theta(\bar{z})Z)+(\mu-1)H_\theta(x_\tau,\frac{-q}{\mu-1})\} \le o(1),
\end{array}
\right.
\end{eqnarray}
with $p,q$ defined in~\eqref{defpq},
and~\eqref{meuf3} reads now
\begin{eqnarray}\label{neuf3}
&&\mathop{\rm inf}_{\theta\in\Theta}
\{-{\rm trace}(A_\theta(\bar{x}) X
+A_\theta(\bar{y})Y+A_\theta(\bar{z}) Z)
+ \mathcal{H}_\theta \}\le o(1).\nonumber
\end{eqnarray}
where we set
\begin{eqnarray*}
&& \mathcal{H}_\theta :=
H_\theta(x_\tau,p+q)+(\mu-1)H_\theta(x_\tau,\frac{-q}{\mu-1})
-\mu H_\theta(x_\tau,\frac{p}{\mu}).
\end{eqnarray*}
From~\eqref{calcul-class123}, we get
\begin{eqnarray}\label{neuf23}
\mathop{\rm inf}_{\theta\in\Theta}\{{-\rm trace}(A_\theta(\bar{x}) X
+A_\theta(\bar{y})Y+A_\theta(\bar{z}) Z)\} \ge o(1).
\end{eqnarray}
It then follows
\begin{eqnarray}\label{neuf8}
\mathop{\rm inf}_{\theta\in\Theta}\mathcal{H}_\theta \le o(1).
\end{eqnarray}
From the convexity of $H_\theta,$ we know that $\mathcal{H}_\theta\geq 0$
(see~\eqref{Hcvx123}) but we need a strict inequality to
reach a contradiction.

Up to extract subsequences, we may assume that
$$
\lim_{\alpha \to \infty} p = \bar{p}
\quad {\rm and} \quad
\lim_{\alpha \to \infty} q = \bar{q},
$$
(recall that $p$ and $q$ are given by~\eqref{defpq} and are bounded
since $\tilde{u}, v$ are Lipschitz continuous).
We distinguish two cases depending on the above limit.

\noindent{\it First case.} We suppose that
$$
\frac{\bar{p}}{\mu}+\frac{\bar{q}}{\mu-1} \neq 0.
$$
Letting $\alpha\to +\infty$ in~\eqref{neuf8}
and recalling that $x_\tau\in\Sigma,$ we obtain
a contradiction thanks to the strict convexity of $H_\theta.$
More precisely, we apply~\eqref{cvx_neuf}
with $\lambda:=1/\mu$ and $P\not= Q$ given by
$P:=\bar{p}+\bar{q},$ $Q:=-\bar{q}/(\mu -1).$

\noindent{\it Second case.}
One necessarily has
\begin{eqnarray}\label{defp0}
\frac{\bar{p}}{\mu}=\frac{-\bar{q}}{\mu-1}=:p_\e=p_\e(\eta,\mu,\epsilon).
\end{eqnarray}
Notice that, in this case,
$\lim_{\alpha \to \infty}\mathcal{H}_\theta=0$ and therefore the strict 
convexity of the $H$ does not play any role.

From~\eqref{nlle-ineq}, we have
$|X|, |Y|, |Z|\le C(\alpha^2+(\alpha\e)^{-2}+\epsilon^{-1})$. Hence
\begin{eqnarray}\label{estimX}
|{\rm trace}(\sigma_\theta(\bar{x})\sigma_\theta(\bar{x})^T X) |
&\le& |\sigma_\theta(\bar{x})|^2 |X|
= |\sigma_\theta(\bar{x})-\sigma_\theta(x_\tau)|^2|X|\\\nonumber
&\le& C(\alpha^2+\frac{1}{\alpha^2\e^2}+\frac{1}{\epsilon})|\bar{x}-x_\tau|^2,
\end{eqnarray}
where we used  the fact that $\sigma(x_\tau)=0$
since $x_\tau\in \Sigma.$

We estimate the rate of convergence of the term $|\bar{x}-x_\tau|.$ Since $\Phi$ 
achieves its maximum  at  $(\bar{x},\bar{y},\bar{z},\bar{t},\bar{s})$, we have
\begin{eqnarray*}
\tilde{u}(\bar{x},\bar{t})-v(\bar{z})-\mu (\tilde{u}(\bar{y},\bar{s})-v(\bar{z}))
-\mu\eta(\bar{s}-\bar{t})-\langle \bar{x}-x_\tau\rangle
\geq \Phi (\bar{x},\bar{y},\bar{z},\bar{t},\bar{s})
\geq  M_{\eta,\mu}[\tilde{u}](\tau)-\epsilon.
\end{eqnarray*}
This implies
\begin{eqnarray*}
\sqrt{\epsilon^2+|\bar{x}-x_\tau|^2}&=&\langle \bar{x}-x_\tau\rangle\\
&\le& 
\tilde{u}(\bar{x},\bar{t})-v(\bar{z})-\mu (\tilde{u}(\bar{y},\bar{s})-v(\bar{z}))
-\mu\eta(\bar{s}-\bar{t})-M_{\eta,\mu}[\tilde{u}](\tau)+\epsilon\\
&=& [\tilde{u}(\bar{x},\bar{t})-\tilde{u}(\bar{z},\bar{t})]
+\mu[\tilde{u}(\bar{z},\bar{s})-\tilde{u}(\bar{y},\bar{s})]\\
&& +[\tilde{u}(\bar{z},\bar{t})-v(\bar{z})-\mu(\tilde{u}(\bar{z},\bar{s})-v(\bar{z}))
-\mu\eta(\bar{s}-\bar{t})]-M_{\eta,\mu}[\tilde{u}](\bar{t})+\epsilon\\
&\le& [\tilde{u}(\bar{x},\bar{t})-\tilde{u}(\bar{z},\bar{t})]
+\mu[\tilde{u}(\bar{z},\bar{s})-\tilde{u}(\bar{y},\bar{s})]+\epsilon\\
& \le& C(|\bar{x}-\bar{y}|+|\bar{x}-\bar{z}|)+\epsilon,
\end{eqnarray*}
where we used the fact that $M_{\eta,\mu}[\tilde{u}](t)={m}_{\eta\mu}$ for all 
$t>0$ and $\tilde{u}$ is Lipschitz continuous.
So,
\begin{eqnarray*}
|\bar{x}-x_\tau|^2 \le C(|\bar{x}-\bar{y}|^2+|\bar{x}-\bar{z}|^2)
+C\epsilon(|\bar{x}-\bar{y}|+|\bar{x}-\bar{z}|).
\end{eqnarray*}
It is worth noticing that $C$ depends only on $\tilde{u}$. Recalling that
$\alpha^2|\bar{x}-\bar{y}|,$ $\alpha^2|\bar{x}-\bar{z}|$ are bounded and
plugging the above estimates
in~\eqref{estimX}, we get
\begin{eqnarray*}
&&
{\rm trace}(A_\theta(\bar{x})X)= o(1)+O(\epsilon),
\end{eqnarray*}
where, for fixed $\epsilon >0,$ $o(1)\to 0$ as $\alpha\to +\infty$ and 
$O(\epsilon)\to 0$ as $\epsilon\to 0.$
Both error terms are uniform in $\theta.$
In the same way, we obtain
\begin{eqnarray*}
&&{\rm trace}(A_\theta(\bar{y})Y)\, , \,
{\rm trace}(A_\theta(\bar{z})Z)=o(1)+O(\epsilon).
\end{eqnarray*}
Sending $\alpha$ to $+\infty$ in~\eqref{neuf1234}, we have
\begin{eqnarray*}
&& \left\{
\begin{array}{ll}
\displaystyle
-\mu\eta+\mathop{\rm sup}_{\theta\in\Theta}\{H_\theta(x_\tau,p_\e)\}+O(\epsilon)\le 0,\\[2mm]
\displaystyle
-\mu\eta
+\mathop{\rm sup}_{\theta\in\Theta}\{\mu H_\theta(x_\tau,p_\e)\}+O(\epsilon) \ge 0,\\[2mm]
\displaystyle
\mathop{\rm sup}_{\theta\in\Theta}\{(\mu-1)H_\theta(x_\tau,p_\e)\}+O(\epsilon) \le 0
\end{array}
\right.
\end{eqnarray*}
(we recall that $p_\e$ is defined in~\eqref{defp0}).
Up to a subsequence if necessary, we can assume that $p_\e \to p_0$ when $\e \to 0$. 
So, we get
\begin{eqnarray*}
&& \left\{
\begin{array}{ll}
\displaystyle-\mu\eta+\mathop{\rm sup}_{\theta\in\Theta}\{H_\theta(x_\tau,p_0)\}\le 0,\\[2mm]
\displaystyle-\mu\eta
+\mathop{\rm sup}_{\theta\in\Theta}\{\mu H_\theta(x_\tau,p_0)\} \ge 0,\\[2mm]
\displaystyle\mathop{\rm sup}_{\theta\in\Theta}\{(\mu-1)H_\theta(x_\tau,p_0)\} \le 0.
\end{array}
\right.
\end{eqnarray*}
This implies $\mu\eta=0$, which is a contradiction.
It ends the proof.
\end{proof}

\begin{proof}[End of the proof of Theorem \ref{strict_co_dege}]
We obtained that ${m}_{\eta 1}=0.$
From ${m}_{\eta 1}=0,$ we infer
\begin{eqnarray*}
\tilde{u}(x,t)-\tilde{u}(x,s)-\eta(s-t)\le 0, \text{ for all $x \in \T^N$ and $s \ge t\geq 0.$}
\end{eqnarray*}
Letting $\eta$ tend to 0, we obtain
\begin{eqnarray*}
\tilde{u}(x,t)-\tilde{u}(x,s)\le 0.
\end{eqnarray*}
The uniform convergence of $(u(\cdot,t_n+\cdot))_{n}$ 
to $\tilde{u}\in W^{1,\infty}(\T^N\times [0,+\infty))$ (see  Lemma~\ref{Pvindept})
yields
\begin{eqnarray*}
-o_n(1)+\tilde{u}(x,t)\le u(x,t+t_n) \le o_n(1)+\tilde{u}(x,t)
\quad\text{in $\T^N \times (0,\infty)$}.
\end{eqnarray*}
Since $\tilde{u}$ is nondecreasing in $t$, 
there exists $u_\infty\in W^{1,\infty}(\T^N)$
such that $\tilde{u}(\cdot ,t) \to u_{\infty}(\cdot)$ uniformly 
as $t$ tends to infinity.
Taking Barles-Perthame half relaxed limits, we obtain
\begin{eqnarray*}
-o_n(1)+u_{\infty}(x)\le \liminf_{t\to +\infty}\phantom{ }_*\, 
u(x,t)\le \limsup_{t\to +\infty}\phantom{ }^*\,u(x,t) \le o_n(1)+u_{\infty}(x)
\quad x\in\T^N.
\end{eqnarray*}
Letting $n$ tend to infinity, we derive
\begin{eqnarray*}
\liminf_{t\to +\infty}\phantom{ }_*\, u(x,t)
=\limsup_{t\to +\infty}\phantom{ }^*\, u(x,t)=u_{\infty}(x), \quad x\in\T^N,
\end{eqnarray*}
which yields the uniform convergence of $u(\cdot ,t)$ to $u_{\infty}$ 
in $\T^N$ as $t$ tends to infinity. 

By the stability result, $u_{\infty}$ is a solution of \eqref{sta} 
with $c=0.$ It ends the 
proof of Theorem~\ref{strict_co_dege}.
\end{proof}


\section{Proof of Theorem~\ref{mainresult} and Proposition~\ref{valc-nr}}\label{mix_main}

The proof of Theorem~\ref{mainresult} follows the same ideas as the 
one of Theorem~\ref{strict_co_dege} with minor adaptations.
It is actually easier, since, from~\eqref{0sous-sol}, we choose $v=0$ 
in~\eqref{p-sc}-\eqref{m-sc} which allows to simplify several arguments. 
We only provide the proof of the main changes which consist, on the one side, 
in taking into account the set $K$ which appears in~\eqref{H10} and,
on the other side, in the proof of Lemma \ref{IMpo1}.

As in the proof of Theorem~\ref{strict_co_dege}, we start with a change
of function $u\to u+ct$ which allows to deal with bounded functions
$u, \tilde{u}$ and $c=0.$

\begin{lem}\label{ensK}
For every $x_0\in K,$
The function $t\mapsto u(x_0,t)$ is nonincreasing.
\end{lem}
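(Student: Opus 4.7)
After the standard reduction to $c=0$ recalled at the start of Section~\ref{strict_dege}, assumption~\eqref{H10}(ii)(a) reads $H_\theta(x,p)\geq 0$ for all $(x,p)\in K\times\R^N$ and $\theta\in\Theta$. Fix $x_0\in K$. My plan is to show that the scalar function $\phi(t):=u(x_0,t)$ is a viscosity subsolution of $\phi'(t)\leq 0$ on $(0,+\infty)$; this classically yields that $\phi$ is nonincreasing.

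Let $\psi\in C^1((0,+\infty))$ and suppose $\phi-\psi$ has a strict local maximum at some $t_0>0$. I would introduce the penalization
$$\Phi_\alpha(x,t):=u(x,t)-\psi(t)-\alpha|x-x_0|^2,$$
whose maximum on $\T^N\times[t_0-\delta,t_0+\delta]$ is attained at some $(x_\alpha,t_\alpha)\to(x_0,t_0)$. Comparing $\Phi_\alpha(x_\alpha,t_\alpha)$ with $\Phi_\alpha(x_0,t_\alpha)$ and using the Lipschitz bound from~\eqref{sol-lip-hold} gives $|x_\alpha-x_0|\leq C/\alpha$, so $p_\alpha:=2\alpha(x_\alpha-x_0)$ stays bounded. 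The viscosity subsolution property of $u$ applied with the test function $\psi(t)+\alpha|x-x_0|^2$ at $(x_\alpha,t_\alpha)$ yields
$$\psi'(t_\alpha)+\sup_{\theta\in\Theta}\bigl\{-2\alpha\,\mathrm{tr}(A_\theta(x_\alpha))+H_\theta(x_\alpha,p_\alpha)\bigr\}\leq 0.$$

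The main step is to kill the second-order term uniformly in $\theta$. Since $x_0\in K\subset\Sigma$ forces $\sigma_\theta(x_0)=0$ and~\eqref{H1_dege} provides a Lipschitz bound on $\sigma_\theta$ independent of $\theta$, one has $|A_\theta(x_\alpha)|\leq C|x_\alpha-x_0|^2\leq C/\alpha^2$, hence $2\alpha\,\mathrm{tr}(A_\theta(x_\alpha))=O(1/\alpha)$ uniformly in $\theta$. Extracting a subsequence with $p_\alpha\to\bar p$ and passing to the limit, using the local uniform continuity of $H_\theta$ from~\eqref{Hunif-cont}, I obtain
$$\psi'(t_0)+\sup_{\theta\in\Theta}H_\theta(x_0,\bar p)\leq 0.$$
Since $x_0\in K$, \eqref{H10}(ii)(a) forces $\sup_\theta H_\theta(x_0,\bar p)\geq 0$, so $\psi'(t_0)\leq 0$ and the claim follows. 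The only real difficulty is the uniform-in-$\theta$ control of the diffusion at a point of $\Sigma$, which is precisely where the vanishing of $\sigma_\theta$ on $\Sigma$ combined with its uniform Lipschitz regularity is essential.
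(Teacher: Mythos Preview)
Your proof is correct and follows essentially the same approach as the paper: a spatial penalization by $\alpha|x-x_0|^2$, the Lipschitz bound on $u$ to bound $p_\alpha$, and the key observation that $\sigma_\theta(x_0)=0$ together with the uniform Lipschitz bound~\eqref{H1_dege} forces the second-order term to vanish in the limit, after which~\eqref{H10}(ii)(a) concludes. The only cosmetic difference is that the paper packages this as a contradiction argument (assuming $u(x_0,s_0)>u(x_0,t_0)$ and penalizing with $\alpha(t-t_0)$ over $t\ge t_0$), whereas you prove directly that $t\mapsto u(x_0,t)$ is a viscosity subsolution of $\phi'\le 0$; the analytic content is identical.
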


\begin{proof}[Proof of Lemma \ref{ensK}]
Let $x_0\in K,$ $t_0\geq 0$ and we assume
by contradiction that there exists $s_0 >t_0$ such that $u(x_0,s_0)>u(x_0,t_0).$
Consider, for $\epsilon, \alpha >0,$
\begin{eqnarray}\label{1234-az}
\mathop{\rm sup}_{x\in\T^N, t\geq t_0}\{u(x,t)-u(x_0,t_0)-\frac{|x-x_0|^2}{\epsilon^2}
-\alpha (t-t_0)\}.
\end{eqnarray}
Since $u$ is bounded,
this supremum is positive and is achieved at some $(\bar{x},\bar{t})$ with $\bar{t}>t_0$
for $\epsilon, \alpha >0$ small enough.
By classical estimates, $\frac{|\bar{x}-x_0|^2}{\epsilon^2}\to 0$ as
$\epsilon\to 0.$ Since $u$ is a viscosity subsolution of~\eqref{DHJE},
we obtain
\begin{eqnarray}\label{ineqvis87}
\alpha+\mathop{\rm sup}_{\theta\in\Theta}
\{-{\rm trace}(A_\theta(\bar{x}) \frac{2I}{\epsilon^2})
+H_\theta(\bar{x},p)\}\le 0,
\end{eqnarray}
with $p=2\frac{\bar{x}-x_0}{\epsilon^2}.$ On the one side,
since $u(\cdot,t)$ is Lipschitz continuous,
$p$ is bounded and, up to extract a subsequence as $\epsilon\to 0,$ 
we may assume that $p\to\bar{p}.$
On the other side, since $\bar{x}\to x_0\in K\subset \Sigma$
and $\sigma_\theta$ satisfies~\eqref{H1_dege},
\begin{eqnarray*}
|{\rm trace}(A_\theta(\bar{x}) \frac{2I}{\epsilon^2}|
\leq \frac{|\sigma_\theta(\bar{x})|^2}{\epsilon^2}
\leq C\frac{|\bar{x}-x_0|^2}{\epsilon^2}.
\end{eqnarray*}
From~\eqref{ineqvis87}, sending $\epsilon\to 0,$ we obtain
\begin{eqnarray*}
\alpha+\mathop{\rm sup}_{\theta\in\Theta}
H_\theta(\bar{x},\bar{p})\le 0,
\end{eqnarray*}
which is a contradiction with~\eqref{H10}(ii)(a) (with $c=0$).

Therefore, for all $s_0\geq t_0,$ we have  $u(x_0,s_0)\leq u(x_0,t_0).$
\end{proof}

A consequence of  Lemma~\ref{ensK} is that $u(x,t)$ converges on 
$K$ and therefore
\begin{eqnarray*}
\tilde{u}(x,t)~~\text{is independent of $t$, for any $x \in K$,}
\end{eqnarray*}
where $\tilde{u}$ is defined in the statement of Lemma~\ref{Pvindept}.
Assuming, as in the proof of Theorem~\ref{strict_co_dege}, that
${m}_{\eta 1}>0$ (and therefore ${m}_{\eta\mu}>0$ for $\mu$ close to 1),
we obtain  from the very definition of $P_{\eta \mu} [\tilde{u}]$ that
\begin{eqnarray}\label{dist-a-K}
{\rm dist}(x_\tau,K) \not= 0 \text{ for $\mu$ close enough to 1,}
\end{eqnarray}
where $x_\tau\in\T^N$ is the point where the maximum is achieved
in $M_{\eta\mu}[\tilde{u}](\tau).$

\begin{proof}[Proof of Lemma \ref{IMpo1} under the assumptions 
of Theorem~\ref{mainresult}]
deptra
Let us note that Lemma \ref{rameneF} is still true under the assumptions 
of Theorem~\ref{mainresult}, so we can assume that
$
x_\tau\in \Sigma.
$

Since $v=0$ in~\eqref{p-sc}-\eqref{m-sc},
we may choose $Z=0$ in~\eqref{nlle-ineq}, and 
$q=0$ in~\eqref{defpq}. The viscosity inequalities~\eqref{neuf1234} reads
\begin{eqnarray}\label{1234-mr}
\displaystyle
&& \left\{
\begin{array}{ll}
\displaystyle-\mu\eta+\mathop{\rm sup}_{\theta\in\Theta}
\{
-{\rm trace}(A_\theta(\bar{x}) X)+H_\theta(x_\tau,p)\}\le o(1),\\[3mm]
\displaystyle
-\mu\eta+\mathop{\rm sup}_{\theta\in\Theta}
\{
{\rm trace}(A_\theta(\bar{y})Y)+\mu H_\theta(x_\tau,\frac{p}{\mu})\} \ge o(1),\\[3mm]
\displaystyle
\mathop{\rm sup}_{\theta\in\Theta}
H_\theta(x_\tau,0) \le o(1).
\end{array}
\right.
\end{eqnarray}
Notice that the third inequality is nothing than~\eqref{0sous-sol} (with $c=0$
after our change of function).
Subtracting the two first inequalities from~\eqref{neuf23} yield
\begin{eqnarray}\label{subtract-mr}
&& \mathop{\rm inf}_{\theta\in\Theta}\{
H_\theta(x_\tau,p)- \mu H_\theta(x_\tau,\frac{p}{\mu}) \}\le o(1).
\end{eqnarray}
As in the corresponding proof in Section~\ref{strict_dege}, 
we distinguish two cases depending on 
\begin{eqnarray*}
&& \mathop{\rm lim}_{\alpha\to +\infty} p=\bar{p}
\end{eqnarray*}
(up to subsequences if necessary).

\noindent{\it First Case.} If $\bar{p}\not= 0.$
Letting $\alpha\to +\infty$ in~\eqref{subtract-mr}
and recalling~\eqref{dist-a-K}, we obtain
a contradiction with~\eqref{H10}(ii)(b).

\noindent{\it Second Case.} If $\bar{p}= 0.$
Proceeding similarly as in the second case of the proof of Lemma~\eqref{IMpo1}, we obtain
\begin{eqnarray*}
|{\rm trace}(A_\theta(\bar{y}) Y) |
=o(1)+O(\e).
\end{eqnarray*}
Taking into account this estimate, by sending $\alpha\to \infty$ and then $\e \to 0$ in
the second inequality in~\eqref{1234-mr}
, we get
\begin{eqnarray*}
 -\mu\eta 
+\mathop{\rm sup}_{\theta\in\Theta}\{\mu H_\theta(x_\tau,0)\} \ge 0,
\end{eqnarray*}
which is a contradiction with the third inequality in~\eqref{1234-mr}.
\end{proof}

\begin{proof}[Proof of Proposition \ref{valc-nr}]
Consider the solution $v_\lambda^\epsilon$
of 
\begin{eqnarray*}
\lambda v_\lambda^\epsilon+\mathop{\rm sup}_{|e|\leq \epsilon,\,\theta\in\Theta}
\{-{\rm trace}(A_\theta(x+e)D^2 v_\lambda^\epsilon) + H_\theta(x+e,Dv_\lambda^\epsilon)\}=0, \quad
x\in \T^N.
\end{eqnarray*}
It follows from~\cite[Lemma 2.7]{bj02} that 
$v_{\lambda\epsilon}=\rho_\epsilon *  v_\lambda^\epsilon,$ where $\rho_\epsilon$ is a standard
mollifier, is a $C^\infty$ subsolution of~\eqref{DHJE-sta}. Moreover, 
from~\cite[Theorem A.1]{bj02}, we have $\lambda |v_\lambda-v_{\lambda\epsilon}|\leq C\epsilon.$
Therefore, we have
in the classical sense at any $x\in\T^N,$
\begin{eqnarray*}
&&\lambda v_{\lambda\epsilon}(x)+\mathop{\rm sup}_{\theta\in\Theta}
\{-{\rm trace}(A_\theta(x)D^2 v_{\lambda\epsilon}(x)) + H_\theta(x,Dv_{\lambda\epsilon}(x))\}\leq 0.
\end{eqnarray*}
We can write this inequality at any $\hat{x}\in \Sigma$ where 
${\rm trace}(A_\theta(\hat{x})D^2 v_{\lambda\epsilon}(\hat{x}))=0.$
It follows
\begin{eqnarray*}
&&-\lambda v_{\lambda}(\hat{x}) +C\epsilon 
\geq
-\lambda v_{\lambda\epsilon}(\hat{x})
\geq 
\mathop{\rm sup}_{\theta\in\Theta}H_\theta(\hat{x},Dv_{\lambda\epsilon}(\hat{x}))
\geq 
\mathop{\rm sup}_{\theta\in\Theta}H_\theta(\hat{x},0),
\end{eqnarray*}
 using~\eqref{hypHF}. Sending $\lambda\to 0$ and then $\epsilon\to 0,$
we obtain 
\begin{eqnarray*}
-\lambda v_{\lambda}(\hat{x})\to c\geq \mathop{\rm sup}_{\theta\in\Theta}H_\theta(\hat{x},0), 
\quad\text{for any $\hat{x}\in\Sigma.$}
\end{eqnarray*}
Hence $c\geq \mathop{\rm sup}_{x\in\Sigma,\theta\in\Theta}H_\theta(x,0)$.

We prove now the opposite inequality under either \eqref{Hpresque-cvx} or \eqref{supTS}. 

\noindent{\it Under Assumption~\eqref{supTS}.}
Let $v_\lambda$ be the solution of~\eqref{DHJE-sta} and $x_\lambda\in\T^N$
such that $v_\lambda (x_\lambda)={\rm min}_{\T^N}v_\lambda.$ We have
$\lambda v_\lambda (x_\lambda) + {\rm sup}_{\theta} H_\theta (x_\lambda, 0)\geq 0.$
Taking a subsequence $\lambda\to 0$ such that $\lambda v_\lambda\to -c,$
we get 
\begin{eqnarray*}
c\leq  \mathop{\rm sup}_{x\in \T^N, \theta\in\Theta}  H_\theta (x, 0)
=\mathop{\rm sup}_{x\in \Sigma, \theta\in\Theta}  H_\theta (x, 0)
\end{eqnarray*}
by~\eqref{hypHF}.

\noindent{\it Under Assumption~\eqref{Hpresque-cvx}.}
We set $\hat{H}_\theta (x,p)=H_\theta (x,p)-C$ where $C>0$ is 
big enough in order that $\hat{H}_\theta (x,0)\leq 0.$
It follows that, if $v_\lambda$ is a solution of~\eqref{DHJE-sta},
then $\hat{v}_\lambda=v_\lambda+C/\lambda$ is a solution of
\begin{eqnarray}\label{np11}
\lambda\hat{v}_\lambda +\mathop{\rm sup}_{\theta\in\Theta}
\{-{\rm trace}(A_\theta(x)D^2 \hat{v}_\lambda) 
+ \hat{H}_\theta(x,D\hat{v}_\lambda)\}=0
\end{eqnarray}
and $\hat{v}_\lambda\geq 0.$
For any $\gamma >1,$ we have
\begin{eqnarray*}
\frac{\lambda}{\gamma} \hat{v}_{\lambda/\gamma}+\mathop{\rm sup}_{\theta\in\Theta}
\{-{\rm trace}(A_\theta(x)D^2\hat{v}_{\lambda/\gamma}) 
+ \hat{H}_\theta(x,D\hat{v}_{\lambda/\gamma})\}=0,
\end{eqnarray*}
equivalently,
\begin{eqnarray*}
\lambda \hat{v}_{\lambda/\gamma}+\mathop{\rm sup}_{\theta\in\Theta}
\{-{\rm trace}(A_\theta(x)D^2(\gamma \hat{v}_{\lambda/\gamma})) 
+ \gamma \hat{H}_\theta(x,D\hat{v}_{\lambda/\gamma})\}=0.
\end{eqnarray*}
Noticing that $\hat{H}_\theta$ still satisfies~\eqref{Hpresque-cvx},
we have
\begin{eqnarray}\label{np22}
&& \lambda (1-\gamma) \min_{\T^N} \hat{v}_{\lambda/\gamma}
+\lambda (\gamma \hat{v}_{\lambda/\gamma}) \\\nonumber
&&+\mathop{\rm sup}_{\theta\in\Theta}
\{-{\rm trace}(A_\theta(x)D^2 (\gamma \hat{v}_{\lambda/\gamma})) 
+\hat{H}_\theta(x,D(\gamma \hat{v}_{\lambda/\gamma}))
-(1-\gamma) \hat{H}_\theta(x,0)\}\ge 0.
\end{eqnarray}
Subtracting~\eqref{np11} and~\eqref{np22}, we get, for 
$w_{\lambda\gamma}= \hat{v}_\lambda -\gamma  \hat{v}_{\lambda/\gamma},$
\begin{eqnarray*}
0 
&\geq &
\lambda (\gamma -1) \min_{\T^N} \hat{v}_{\lambda/\gamma}
+\lambda w_{\lambda\gamma}\\
&& +\mathop{\rm inf}_{\theta\in\Theta}\{-{\rm trace}(A_\theta D^2w_{\lambda\gamma}) 
+ \hat{H}_\theta(x,D\hat{v}_\lambda)-\hat{H}_\theta(x,D(\gamma \hat{v}_{\lambda/\gamma}))
+(1-\gamma) \hat{H}_\theta(x,0)\}\\
&\geq&
\lambda (\gamma -1) \min_{\T^N} \hat{v}_{\lambda/\gamma}
+ \lambda w_{\lambda\gamma}
+ \mathop{\rm inf}_{\theta\in\Theta}\{-{\rm trace}(A_\theta D^2w_{\lambda\gamma}) 
+(1-\gamma) \hat{H}_\theta(x,0)\}-C|Dw_{\lambda\gamma}|.
\end{eqnarray*}
Therefore
\begin{eqnarray}\label{np33}
&&\lambda w_{\lambda\gamma}
+ \mathop{\rm inf}_{\theta\in\Theta}\{-{\rm trace}(A_\theta D^2w_{\lambda\gamma})\}
-C|Dw_{\lambda\gamma}|
\leq (\gamma -1)
\left(  \mathop{\rm sup}_{\theta\in\Theta}\hat{H}_\theta(x,0)
-\lambda  \min_{\T^N} \hat{v}_{\lambda/\gamma}\right).
\end{eqnarray}
Recalling that $\hat{H}_\theta(x,0)\leq 0$ and $\hat{v}_{\lambda/\gamma}\geq 0,$
the right-hand side of~\eqref{np33} is nonnegative. By the strong maximum principle,
we obtain
\begin{eqnarray}\label{con_licit3}
\max_{x \in \T^N}w_{\lambda\gamma} =w_{\lambda\gamma}(x_0)
\quad \text{with $x_0 \in \Sigma$}.
\end{eqnarray}
Writing~\eqref{np33} at $x_0,$ we obtain
\begin{eqnarray*}
&&\lambda w_{\lambda\gamma}(x_0)
\leq (\gamma -1)
\left(  \mathop{\rm sup}_{x\in\Sigma, \theta\in\Theta}\hat{H}_\theta(x,0)
-\lambda  \min_{\T^N} \hat{v}_{\lambda/\gamma}\right).
\end{eqnarray*}
It follows
\begin{eqnarray*}
\lambda v_\lambda(x_0)-\gamma^2\frac{\lambda}{\gamma}v_{\lambda/\gamma}(x_0)
\leq (\gamma -1)
\left( \mathop{\rm sup}_{x\in\Sigma, \theta\in\Theta}H_\theta(x,0)
-\gamma \min_{\T^N} \frac{\lambda}{\gamma}v_{\lambda/\gamma}\right).
\end{eqnarray*}
Sending $\lambda\to 0,$ up to take subsequences, we obtain
\begin{eqnarray*}
c \leq \mathop{\rm sup}_{x \in \Sigma,\theta\in\Theta}
H_\theta(x,0).
\end{eqnarray*}
\end{proof}

\section{Proof of Theorem \ref{cas-degenere} and Propositions~\ref{ell-sur-sol}
and~\ref{cor-degenere}}
\label{sec:dege}

\begin{proof}[Proof of Theorem \ref{cas-degenere}]
The proof follows exactly the same line as those of
Theorems~\ref{strict_co_dege} and~\ref{mainresult}.
The only difference is the proof of Lemma~\ref{rameneF}
which is given below.
\end{proof}

\begin{proof}[Proof of Lemma \ref{rameneF} when~\eqref{sur-sol-stricte} holds.]
We write $U=P_{\eta\mu}[\tilde{u}]$ for simplicity.
Since $U$ is bounded, we can consider the half-relaxed limit
\begin{eqnarray*}
\overline{U}(x)= \mathop{\rm lim\,sup}_{y\to x, t\to +\infty}\, U(y,t).
\end{eqnarray*}
From Lemma~\ref{lem:ineq-var} and by the stability result, $\overline{U}$ is 
a viscosity subsolution of
\begin{eqnarray}\label{HJstatio111}
{\rm min}\{ \overline{U}\, , \, \mathop{\rm inf}_{\theta\in\Theta}\big\{
-{\rm tr}(A_\theta(x)D^2\overline{U})\big\}
-C|D\overline{U}|\} \le 0, \quad x\in \T^N.
\end{eqnarray}
Notice that one still has ${\rm max}_{\T^N}\overline{U}= {m}_{\eta\mu}>0.$

\noindent{\it Step 1. ${\rm argmax}\,\overline{U}\cap \Sigma\not=\emptyset$
thanks to~\eqref{sur-sol-stricte}.}
We argue by contradiction assuming that there exists $\delta >0$
such that ${\rm argmax}\,\overline{U}\subset
\Sigma_\delta^C,$ where $\Sigma_\delta=\{{\rm dist}(\cdot,\Sigma)\leq \delta\}.$
It follows that there exists $\rho_\delta >0$ such that
\begin{eqnarray} \label{abc246}
{m}_{\eta\mu}=\overline{U}(\hat{x})= \mathop{\rm max}_{\T^N}\overline{U}
=\mathop{\rm max}_{\Sigma_\delta^C}\overline{U}
\geq \mathop{\rm max}_{\Sigma_\delta}\overline{U}+\rho_\delta, \quad
\text{for some $\hat{x}\in \Sigma_\delta^C.$}
\end{eqnarray}
Let $\tilde{U}$ be a 1-periodic function of $\R^N$ such that
$\overline{U}(\pi(\tilde{x}))=\tilde{U}(\tilde{x})$ for all $\tilde{x}\in\R^N$
and $\tilde{\Sigma}_\delta=\{{\rm dist}(\cdot, \tilde{\Sigma})\leq \delta\}.$
From~\eqref{abc246} and by 1-periodicity, we infer
\begin{eqnarray*}
{m}_{\eta\mu}=\tilde{U}(\tilde{x})= \mathop{\rm max}_{\R^N}\tilde{U}
=\mathop{\rm sup}_{\tilde{\Sigma}_\delta^C}\tilde{U}
\geq \mathop{\rm sup}_{\tilde{\Sigma}_\delta}\tilde{U}+\rho_\delta, \quad
\text{for some $\tilde{x}\in \tilde{\Sigma}_\delta^C\cap [0,1]^N.$}
\end{eqnarray*}

For this $\delta>0,$ we consider the $C^2$ supersolution $\tilde{\psi}_\delta$
and $\Omega_\delta$ given 
by~\eqref{sur-sol-stricte}. Notice that, up to divide $\tilde{\psi}_\delta$
by a constant, we can assume that $|\tilde{\psi}_\delta|\leq 1$ in $\overline{\Omega}_\delta.$
We claim that, for $\varepsilon>0$ small enough,
\begin{eqnarray*}
\mathop{\rm sup}_{\R^N}\{ \tilde{U}-\varepsilon \tilde{\psi}_\delta\}=
 \tilde{U}(\tilde{x}_\delta)-\varepsilon \tilde{\psi}_\delta(\tilde{x}_\delta)
\quad\text{ with $\tilde{x}_\delta\in\tilde{\Sigma}_\delta^C\cap \Omega_\delta$ and
$\tilde{U}(\tilde{x}_\delta)>0.$}
\end{eqnarray*}
Indeed, using that $|\tilde{\psi}_\delta|\leq 1$ in $\overline{\Omega}_\delta$
and $\tilde{\psi}_\delta\geq 0$ on $\Omega_\delta^C,$ we have
\begin{eqnarray*}
\mathop{\rm sup}_{\R^N}\{ \tilde{U}-\varepsilon \tilde{\psi}_\delta\}
&\geq &
\tilde{U}(\tilde{x})-\varepsilon \tilde{\psi}_\delta(\tilde{x})\\
&\geq&
\mathop{\rm sup}_{\tilde{\Sigma}_\delta^C}\tilde{U}-\varepsilon \\
&\geq& \mathop{\rm sup}_{\tilde{\Sigma}_\delta}\tilde{U}+\rho_\delta -\varepsilon\\
&\geq& \mathop{\rm sup}_{\tilde{\Sigma}_\delta}\{ \tilde{U}-\varepsilon \tilde{\psi}_\delta\}
+\rho_\delta -2\varepsilon 
>  \mathop{\rm sup}_{\tilde{\Sigma}_\delta}\{ \tilde{U}-\varepsilon \tilde{\psi}_\delta\}
\end{eqnarray*}
for $\varepsilon$ small enough.
Since $\tilde{U}$ is 1-periodic, $\tilde{\psi}_\delta \leq 0$
on $\Omega_\delta\supset [0,1]^N$ and  $\tilde{\psi}_\delta \geq 0$ on $\Omega_\delta^C,$
it follows
\begin{eqnarray*}
&&\mathop{\rm sup}_{\R^N}\{ \tilde{U}-\varepsilon \tilde{\psi}_\delta\}
= \mathop{\rm max}_{\Omega_\delta}\{ \tilde{U}-\varepsilon \tilde{\psi}_\delta\}
=  \tilde{U}(\tilde{x}_\delta)-\varepsilon \tilde{\psi}_\delta(\tilde{x}_\delta)
\quad \text{with $\tilde{x}_\delta\in \Omega_\delta\cap \tilde{\Sigma}_\delta^C.$}
\end{eqnarray*}
Moreover
\begin{eqnarray*}
&&\tilde{U}(\tilde{x}_\delta)
\geq \tilde{U}(\tilde{x})
-\varepsilon \tilde{\psi}_\delta(\tilde{x})+
\varepsilon \tilde{\psi}_\delta(\tilde{x}_\delta)
\geq  {m}_{\eta\mu}-2\varepsilon.
\end{eqnarray*}
The claim is proved for $\varepsilon$ small enough.

Since  $\tilde{U}(\tilde{x}_\delta)>0,$ the differential
inequality holds in~\eqref{HJstatio111}
in the viscosity sense at $\tilde{x}_\delta.$
Using $\varepsilon \tilde{\psi}_\delta$ as a test-function for $\tilde{U},$ we obtain
\begin{eqnarray*}
\mathop{\rm inf}_{\theta\in\Theta}
\{-{\rm trace}(\tilde{A}_\theta(\tilde{x}_\delta)D^2 
\tilde{\psi}_\delta(\tilde{x}_\delta))\} -C|D\tilde{\psi}_\delta(\tilde{x}_\delta)|
\leq 0,
\end{eqnarray*}
which contradicts~\eqref{sur-sol-stricte}.

Therefore, there exists $\hat{x}_\delta\in \Sigma_\delta$
such that $\overline{U}(\hat{x}_\delta)={m}_{\eta\mu}.$ Letting $\delta\to 0$
and extracting subsequences if necessary, we can find 
$\hat{x}\in{\rm argmax}\,\overline{U}\cap \Sigma.$

\noindent{\it Step 2. Up to replace $\tilde{u}$ by an accumulation
point as in Lemma~\ref{Pvindept}, we may assume that $P_{\eta\mu}[\tilde{u}]$
achieves its maximum at $(\hat{x},1),$ $\hat{x}\in\Sigma.$}
From the previous step, we have $\overline{U}(\hat{x})={m}_{\eta\mu}$
for some $\hat{x}\in\Sigma.$ By definition of the half-relaxed
limit, there exists $t_n\to +\infty$ and $x_n\to \hat{x}$
such that $U(x_n,t_n)\to {m}_{\eta\mu}.$ Let $\hat{t}_n=t_n-1.$
Up to extract subsequences as in the proof of Lemma~\ref{Pvindept},
we may assume that $\tilde{u}(x,t+\hat{t}_n)$
converges uniformly in $W^{1,\infty}(\T^N\times [0,+\infty))$
to some function $\hat{u}.$ Therefore
$P_{\eta\mu}[\tilde{u}](x,t+\hat{t}_n)$ converges uniformly to $P_{\eta\mu}[\hat{u}](x,t).$ 
It follows 
\begin{eqnarray*}
P_{\eta\mu}[\tilde{u}](x_n,\hat{t}_n+1)=U(x_n,t_n)
\to P_{\eta\mu}[\hat{u}](\hat{x},1)=  {m}_{\eta\mu}.
\end{eqnarray*}
The functions $\hat{u},$ $ P_{\eta\mu}[\hat{u}]$ inherit the
properties of $\tilde{u},$ $P_{\eta\mu}[\tilde{u}]$ respectively
and it is sufficient
to prove the convergence of $\hat{u}$ to obtain the convergence
of $\tilde{u}$ and $u.$
\end{proof}

\begin{proof}[Proof of Proposition~\ref{ell-sur-sol}]
Since $\Sigma\not=\emptyset,$ by translation, we can assume without
loss of generality that $0\in \tilde{\Sigma},$ where $\tilde{\Sigma}\subset\R^N$
is a coset representative of $\Sigma\in\T^N.$
Let $\delta >0$ and $\Sigma_\delta=\{{\rm dist}(\cdot, \Sigma)\leq \delta\}.$
From~\eqref{inver_dege}, we have
\begin{eqnarray*}
&& \mathop{\rm inf}_{\tilde{\Sigma}_\delta^C} |\tilde{\sigma}_\theta(x)x|^2
= \mathop{\rm inf}_{\tilde{\Sigma}_\delta^C} \nu(x)|x|^2 =:\nu_\delta >0.
\end{eqnarray*}
We then consider the classical smooth test function which is used
to prove the strong maximum principle, that is
\begin{eqnarray*}
\tilde{\psi}_\delta (x)= e^{-\gamma_\delta r_\delta^2}-e^{-\gamma_\delta |x|^2},
\end{eqnarray*}
where we fix $r_\delta> \sqrt{N},$ $\Omega_\delta:= B(0,r_\delta)$
and $\gamma_\delta >0$ will be chosen later. 
We have
$\tilde{\psi}_\delta <0$ in $B(0,r_\delta)\supset [0,1]^N,$ $\tilde{\psi}_\delta \geq 0$ 
in $B(0,r_\delta)^C$ and $-1< \tilde{\psi}_\delta\leq  e^{-\gamma r_\delta^2}.$

For $x\in \tilde{\Sigma}_\delta^C\cap B(0,r_\delta),$ using~\eqref{H1_dege}, we have
\begin{eqnarray*}
&& -{\rm trace}(\tilde{\sigma}_\theta(x)\tilde{\sigma}_\theta(x)^T D^2\tilde{\psi}_\delta(x))
-C|D\tilde{\psi}_\delta(x)|\\
&=& 
2\gamma_\delta e^{-\gamma_\delta |x|^2} \left(
2\gamma_\delta |\tilde{\sigma}_\theta(x)x|^2 
- {\rm trace}(\tilde{\sigma}_\theta(x)\tilde{\sigma}_\theta(x)^T)
- C|x|\right)\\
&\geq& 
2\gamma_\delta e^{-\gamma_\delta |x|^2} \left(2\gamma_\delta \nu_\delta -C^2-Cr_\delta\right) >0
\end{eqnarray*}
if $\gamma_\delta$ big enough.
Therefore~\eqref{sur-sol-stricte} holds.
\end{proof}

\begin{proof}[Proof of Proposition~\ref{cor-degenere}]
For $\delta >0$ and  $\Sigma_\delta=\{{\rm dist(\cdot,\Sigma)\leq \delta}\},$
we define 
\begin{eqnarray*}
&&
K_\delta:=\bigcup_{x\in \overline{\Sigma_\delta^C}, \theta\in\Theta} 
{\rm ker}(\sigma_\theta(x))\cap \mathbb{S}^{N-1}
\subset
K_0:=\bigcup_{x\in \Sigma^C, \theta\in\Theta} 
{\rm ker}(\sigma_\theta(x))\cap \mathbb{S}^{N-1}.
\end{eqnarray*}
Using~\eqref{cont-x-theta}, we check easily
that $K_\delta$ is a compact subset of $\mathbb{S}^{N-1}.$
Since  $K_0\not= \mathbb{S}^{N-1}$ by~\eqref{non-deg-sigma},
there exists $\xi_\delta\in \mathbb{S}^{N-1}$ and $\epsilon_\delta >0$
such that
\begin{eqnarray}\label{c543}
\mathcal{C}_\delta\cap K_\delta =\emptyset,
\quad \text{with }
\mathcal{C}_\delta:=\{\zeta\in \mathbb{S}^{N-1}: 
\langle\zeta, \xi_\delta \rangle\geq 1-\epsilon_\delta\}.
\end{eqnarray}
For $\lambda >0,$
let $y_\delta=\lambda \xi_\delta\in\R^N.$ We have,
for all $x\in [0,1]^N,$
\begin{eqnarray*}
\langle \frac{y_\delta -x}{|y_\delta -x|}, \xi_\delta\rangle
= \frac{\lambda}{|\lambda \xi_\delta-x|}-
\frac{\langle x,\xi_\delta\rangle}{|\lambda \xi_\delta-x|}
\geq \frac{\lambda}{\lambda +\sqrt{N}}- \frac{\sqrt{N}}{\lambda -\sqrt{N}}
\geq 1-\epsilon_\delta
\end{eqnarray*}
for $\lambda=\lambda_{\delta}$ big enough. Therefore
$\{\frac{y_\delta -x}{|y_\delta -x|} : x\in [0,1]^N\}\subset \mathcal{C}_\delta.$
Using~\eqref{c543},~\eqref{cont-x-theta} and the periodicity
of the coset representatives $\tilde{\sigma}_\theta,$ $\tilde{\Sigma}$
of $\sigma_\theta,$ $\Sigma,$  it follows that
\begin{eqnarray*}
\nu_\delta:=\mathop{\rm inf}_{x\in  \overline{\tilde{\Sigma}_\delta^C\cap [0,1]^N}, \theta\in\Theta}
|\tilde{\sigma}_\theta(x)(y_\delta-x)|>0.
\end{eqnarray*}

For $x\in \R^N,$ we define
\begin{eqnarray*}
\phi(x)=\phi_\delta(x):= -e^{\gamma |x-y_\delta|^2-\gamma R},
\quad R:= 2|y_\delta|^2+2N+1, \ \gamma >0.
\end{eqnarray*}
Notice that $\phi$ is smooth on $\R^N$ and $-1< \phi < 0$
for all $\gamma >0.$ We have, for all $x\in \tilde{\Sigma}_\delta^C\cap [0,1]^N,$
\begin{eqnarray*}
&& -{\rm trace}(\tilde{\sigma}_\theta(x)\tilde{\sigma}_\theta(x)^T D^2\phi(x))
-C|D\phi(x)|\\
&=&2\gamma |\phi(x)|\left(
{\rm trace}(\tilde{\sigma}_\theta(x)\tilde{\sigma}_\theta(x)^T)
+2\gamma \,{\rm trace}(\tilde{\sigma}_\theta(x)\tilde{\sigma}_\theta(x)^T(x-y_\delta)\otimes (x-y_\delta))
-C |x-y_\delta|\right)\\
&\geq& 2\gamma |\phi(x)|(2\gamma \nu_\delta^2-C(r+|y_\delta|))>0
\end{eqnarray*}
for $\gamma=\gamma_{\delta, r}$ big enough.
Therefore $\phi$ is a smooth supersolution of the equation
in~\eqref{sur-sol-stricte} in $\tilde{\Sigma}_\delta^C\cap (0,1)^N.$

We now define $\tilde{\psi}_\delta,$ $\Omega_\delta$ on the following
way. 
We set $\tilde{\psi}_\delta(x) =\phi (x)$ for $x\in \tilde{\Sigma}_{\delta/2}^C\cap [0,1]^N$
Now, from~\eqref{F-bord-tore}, we have
$\{{\rm dist}(\cdot,\partial [0,1]^N)\leq \delta/4\}\cap \tilde{\Sigma}_{\delta/2}^C=\emptyset$
so we can extend $\tilde{\psi}_\delta$ in a smooth way in $[0,1]^N$ such that
$\tilde{\psi}_\delta(x) =0$ for $x\in \{{\rm dist}(\cdot,\partial [0,1]^N)\leq \delta/4\}\cap [0,1]^N$
and $|\tilde{\psi}_\delta|\leq 1$ in $[0,1]^N.$ We then extend $\tilde{\psi}$ outside $[0,1]^N$
by $0.$  We set $\Omega_\delta:= \{{\rm dist}(\cdot,\partial [0,1]^N)< \delta/4\}.$
It is straightforward that the function $\tilde{\psi}_\delta$ satisfies~\eqref{sur-sol-stricte}.
\end{proof}



\begin{thebibliography}{10}

\bibitem{bim13}
G.~Barles, H.~Ishii, and H.~Mitake.
\newblock A new {PDE} approach to the large time asymptotics of solutions of
  {H}amilton-{J}acobi equations.
\newblock {\em Preprint}, 2013.

\bibitem{bj02}
G.~Barles and E.~R. Jakobsen.
\newblock On the convergence rate of approximation schemes for
  {H}amilton-{J}acobi-{B}ellman equations.
\newblock {\em M2AN Math. Model. Numer. Anal.}, 36(1):33--54, 2002.

\bibitem{bs00}
G.~Barles and P.~E. Souganidis.
\newblock On the large time behavior of solutions of {H}amilton-{J}acobi
  equations.
\newblock {\em SIAM J. Math. Anal.}, 31(4):925--939 (electronic), 2000.

\bibitem{bs01}
G.~Barles and P.~E. Souganidis.
\newblock Space-time periodic solutions and long-time behavior of solutions to
  quasi-linear parabolic equations.
\newblock {\em SIAM J. Math. Anal.}, 32(6):1311--1323 (electronic), 2001.

\bibitem{bernard07b}
P.~Bernard.
\newblock Existence of {$C^{1,1}$} critical sub-solutions of the
  {H}amilton-{J}acobi equation on compact manifolds.
\newblock {\em Ann. Sci. \'Ecole Norm. Sup. (4)}, 40(3):445--452, 2007.

\bibitem{cgmt13}
F.~Cagnetti, D.~Gomes, H.~Mitake, and H.~V. Tran.
\newblock A new method for large time behavior of convex {H}amilton-{J}acobi
  equations {I}: degenerate equations and weakly coupled systems.
\newblock {\em Preprint}, 2013.

\bibitem{cil92}
M.~G. Crandall, H.~Ishii, and P.-L. Lions.
\newblock User's guide to viscosity solutions of second order partial
  differential equations.
\newblock {\em Bull. Amer. Math. Soc. (N.S.)}, 27(1):1--67, 1992.

\bibitem{dalio04}
F.~Da~Lio.
\newblock Remarks on the strong maximum principle for viscosity solutions to
  fully nonlinear parabolic equations.
\newblock {\em Commun. Pure Appl. Anal.}, 3(3):395--415, 2004.

\bibitem{ds06}
A.~Davini and A.~Siconolfi.
\newblock A generalized dynamical approach to the large time behavior of
  solutions of {H}amilton-{J}acobi equations.
\newblock {\em SIAM J. Math. Anal.}, 38(2):478--502 (electronic), 2006.

\bibitem{fathi98}
A.~Fathi.
\newblock Sur la convergence du semi-groupe de {L}ax-{O}leinik.
\newblock {\em C. R. Acad. Sci. Paris S\'er. I Math.}, 327(3):267--270, 1998.

\bibitem{fil06}
Y.~Fujita, H.~Ishii, and P.~Loreti.
\newblock Asymptotic solutions of viscous {H}amilton-{J}acobi equations with
  {O}rnstein-{U}hlenbeck operator.
\newblock {\em Comm. Partial Differential Equations}, 31(4-6):827--848, 2006.

\bibitem{il90}
H.~Ishii and P.-L. Lions.
\newblock Viscosity solutions of fully nonlinear second-order elliptic partial
  differential equations.
\newblock {\em J. Differential Equations}, 83(1):26--78, 1990.

\bibitem{ln13a}
O.~Ley and V.~D. Nguyen.
\newblock Lipschitz estimates for elliptic weakly coupled systems of
  hamilton-jacobi equations and applications to large time behavior.
\newblock {\em Preprint}, 2013.

\bibitem{nr99}
G.~Namah and J.-M. Roquejoffre.
\newblock Remarks on the long time behaviour of the solutions of
  {H}amilton-{J}acobi equations.
\newblock {\em Comm. Partial Differential Equations}, 24(5-6):883--893, 1999.

\bibitem{nguyen12}
V.~D. Nguyen.
\newblock Some results on the large time behavior of weakly coupled systems of
  first-order hamilton-jacobi equations.
\newblock {\em Preprint}.

\bibitem{tabet10}
T.~Tabet~Tchamba.
\newblock Large time behavior of solutions of viscous {H}amilton-{J}acobi
  equations with superquadratic {H}amiltonian.
\newblock {\em Asymptot. Anal.}, 66(3-4):161--186, 2010.

\end{thebibliography}



\end{document}